\documentclass[a4paper, 11pt]{amsart}

\usepackage{latexsym}
\usepackage{amsthm, amssymb, amsmath, stmaryrd}
\usepackage{amscd} 
\usepackage{mathrsfs,array}
\usepackage{lmodern}

\usepackage{tabularx} 
\usepackage[all,cmtip,line]{xy}
\usepackage{enumerate}
\usepackage{hyperref}

\newtheorem{thm}{\textbf{Theorem}}[section]
\newtheorem{defn}[thm]{\textbf{Definition}}
\newtheorem{prop}[thm]{\textbf{Proposition}}
\newtheorem{lem}[thm]{\textbf{Lemma}}
\newtheorem{corollary}[thm]{\text{Corollary}}
\newtheorem{rem}[thm]{\textbf{Remark}}

\newtheorem*{thmn}{\textbf{Theorem}}

\def\Q{\mathbb{Q}}
\def\Z{\mathbb{Z}}
\def\C{\mathbb{C}}
\def\A{\mathbb{A}}
\def\R{\mathbb{R}}
\def\Gal{\operatorname{Gal}}

\def\GL{\operatorname{GL}}
\def\SL{\operatorname{SL}}

\def\proet{\mathrm{pro\acute{e}t}}

\def\cM{\mathcal{M}}
\newcommand{\mfM}{\mathfrak{M}}
\newcommand{\cF}{\mathcal{F}}
\newcommand{\cS}{\mathcal{S}}
\def\cO{\mathcal{O}}
\def\cX{\mathcal{X}}
\def\et{\mathrm{\acute{e}t}}
\def\cont{\mathrm{cont}}
\def\LT{\mathrm{LT}}
\def\HT{\mathrm{HT}}
\def\GH{\mathrm{GH}}
\def\Qbar{\overline{\mathbb{Q}}}
\def\xbar{\overline{x}}

\newcommand{\Fl}{{\mathscr{F}\!\ell}}
 
\newcommand{\Hom}{\operatorname{Hom}}
 
\newcommand{\Spf}{\operatorname{Spf}}
\newcommand{\spa}{\operatorname{Spa}} 
\newcommand{\Spa}{\operatorname{Spa}} 
\newcommand{\Map}{\operatorname{Map}}
\newcommand{\Ind}{\operatorname{Ind}}

\textwidth = 14cm
\hoffset = -20pt
\textheight = 615pt

\title{A quotient of the Lubin--Tate tower}
\author{Judith Ludwig}
\email{ludwig@mpim-bonn.mpg.de}

\begin{document}
\maketitle

\begin{abstract} In this article we show that the quotient $\cM_\infty/B(\Q_p)$ of the Lubin--Tate space at infinite level $\cM_\infty$ by the Borel subgroup of upper triangular matrices $B(\Q_p) \subset \GL_2(\Q_p)$ exists as a perfectoid space. As an application we show that Scholze's functor $H^i_\et(\mathbb{P}^1_{\C_p}, \cF_\pi)$ is concentrated in degree one whenever $\pi$ is an irreducible principal series representation or a twist of the Steinberg representation of $\GL_2(\Q_p)$.
\end{abstract}

\section{Introduction}
In \cite{scholzeLT}, Scholze constructs a candidate for the mod $p$ local Langlands correspondence for the group $\GL_n(F)$, where $n\geq 1$ and $F/\Q_p$ is a finite extension. His construction is purely local, satisfies some local-global compatibility and also gives a candidate for the mod $p$ Jacquet--Langlands transfer from $\GL_n(F)$ to $D^*$, the group of units in the central division algebra over $F$ with invariant $1/n$. 

Slightly more precisely, for any admissible smooth representation $\pi$ of $\GL_n(F)$ on an $\mathbb{F}_p$-vector space, Scholze constructs an \'etale sheaf $\cF_\pi$ on $\mathbb{P}_{\C_p}^{n-1}$ using the infinite level Lubin--Tate space $\cM_\infty$ and the Gross--Hopkins period morphism $\cM_\infty \rightarrow \mathbb{P}_{\C_p}^{n-1}$. The cohomology groups 
\[\cS^i(\pi):= H_\et^i(\mathbb{P}_{\C_p}^{n-1},\cF_\pi), i \geq 0,\]
are admissible $D^*$-representations which carry an action of $\Gal(\overline{F}/F)$ and vanish in degree $i > 2(n-1)$ (\cite[Theorem 1.1]{scholzeLT}).

Let $n=2$ and $F=\Q_p$, and denote by $B(\Q_p)\subset \GL_2(\Q_p)$ the Borel subgroup of upper triangular matrices. For two smooth characters $\chi_i: \Q_p^*\rightarrow \mathbb{F}_q^*, i=1,2$, let $\Ind^{\GL_2(\Q_p)}_{B(\Q_p)}(\chi_1,\chi_2)$ denote the parabolic induction. 

In this article we show the following vanishing result.
\begin{thmn}[\ref{mainthm}]
Let $\chi_i:\Q_p^* \rightarrow \mathbb{F}^*_q, i=1,2$, be two smooth characters. Then 
\[\cS^2\left(\Ind^{\GL_2(\Q_p)}_{B(\Q_p)}(\chi_1,\chi_2)\right) =0.\]
\end{thmn}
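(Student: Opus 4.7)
The plan is to recognise $\cF_\pi$ as the pushforward of a lisse sheaf from the perfectoid quotient $\cM_\infty/B(\Q_p)$, and then exploit the affinoid perfectoid structure of the quotient to kill cohomology in degree $2$. Concretely, write $\pi=\Ind^{\GL_2(\Q_p)}_{B(\Q_p)}(\chi_1,\chi_2)$ and let $\chi$ denote the character of $B(\Q_p)$ obtained by composing the projection to the diagonal torus with $\chi_1\otimes\chi_2$. The quotient map $\cM_\infty\to\cM_\infty/B(\Q_p)$ is a pro-\'etale $B(\Q_p)$-torsor, so $\chi$ gives rise to a lisse $\mathbb{F}_q$-sheaf $\mathcal{L}_\chi$ on $\cM_\infty/B(\Q_p)$. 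Let $q\colon\cM_\infty/B(\Q_p)\to\mathbb{P}^1_{\C_p}$ denote the morphism induced by the Gross--Hopkins period map. Applying Frobenius reciprocity to the defining formula
\[
\cF_\pi(V)\;=\;\Map_{\cont,\GL_2(\Q_p)}\!\bigl(|\cM_\infty\times_{\mathbb{P}^1_{\C_p}}V|,\pi\bigr)
\]
yields a natural identification $\cF_\pi\cong q_*\mathcal{L}_\chi$ on the \'etale site of $\mathbb{P}^1_{\C_p}$.

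Next, I would check that $R^iq_*\mathcal{L}_\chi=0$ for $i>0$. This is a local assertion on $\mathbb{P}^1_{\C_p}$ which should reduce, via the explicit perfectoid structure from this paper's main theorem, to Scholze's almost vanishing of $\mathbb{F}_q$-\'etale cohomology on affinoid perfectoid spaces. Granting this, the Leray spectral sequence degenerates to give
\[
\cS^i(\pi)\;=\;H^i_{\et}\!\bigl(\cM_\infty/B(\Q_p),\mathcal{L}_\chi\bigr).
\]

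To kill this group for $i=2$, I would cover $\cM_\infty/B(\Q_p)$ by two affinoid perfectoid opens --- the natural candidates being the $q$-preimages of the two standard affinoid discs covering $\mathbb{P}^1_{\C_p}$, which the main theorem should produce as an output of its construction. Since the $\mathbb{F}_q$-cohomology of either piece vanishes in positive degree (again Scholze's almost vanishing), the Mayer--Vietoris sequence collapses to force $H^i_\et=0$ for $i\geq 2$, whence $\cS^2(\pi)=0$.

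The main obstacle is geometric rather than representation-theoretic: one must show that the perfectoid quotient actually admits a two-element affinoid perfectoid cover lying above the standard cover of $\mathbb{P}^1_{\C_p}$, and that the higher direct images under $q$ genuinely vanish. Both are properties of the explicit perfectoid structure on $\cM_\infty/B(\Q_p)$ that must be extracted from the proof of the main theorem; once they are in hand, the remainder of the argument above is formal.
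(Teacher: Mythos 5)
Your first two steps are in line with the paper: the identification $\cF_\pi\cong\overline{\pi}_{\GH,*}\cF_\chi$ via Frobenius reciprocity is exactly what the paper establishes, and the vanishing $R^i\overline{\pi}_{\GH,*}\cF_\chi=0$ for $i>0$ is Proposition \ref{directimage}. Note, however, that the mechanism for that vanishing is not almost vanishing of $\cO^+/p$-cohomology: the fibres of $\overline{\pi}_{\GH}$ at geometric points are of the form $\overline{y}\times\underline{\mathbb{P}^1(\Q_p)}$, a profinite set over a geometric point, so \emph{any} abelian \'etale sheaf has vanishing higher cohomology there (Lemma \ref{cohpt}). The paper's Proposition \ref{directimage} therefore holds for arbitrary sheaves on the \'etale site, with no appeal to almost mathematics.

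The genuine gap is in your last step. You assert that the $\mathbb{F}_q$-\'etale cohomology of an affinoid perfectoid space vanishes in positive degree, citing Scholze's almost vanishing; but that result concerns the sheaf $\cO^+_X/p$ (and yields \emph{almost} vanishing, not vanishing), not the constant sheaf $\underline{\mathbb{F}_q}$. In fact $H^1_{\et}(X,\mathbb{F}_q)$ is typically nonzero for an affinoid perfectoid $X$, since such spaces admit plenty of nontrivial finite \'etale covers. So the Mayer--Vietoris argument you propose does not get off the ground. To make this circle of ideas work one must first invoke the primitive comparison theorem (\cite[Theorem 3.2]{scholzeLT}) to replace $H^2(\mathbb{P}^1_{\C_p},\cF_\pi)\otimes\cO_{\C_p}/p$ by $H^2(\mathbb{P}^1_{\C_p},\cF_\pi\otimes\cO^+_{\mathbb{P}^1}/p)$ up to almost isomorphism; then one uses Lemma \ref{sheaves} to move $\cO^+/p$ across the pushforward; only then does almost vanishing on affinoid perfectoids become available. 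The paper concludes not by a two-element affinoid cover but by pushing $\cF_\chi\otimes\cO^+/p$ from $(\cM_\infty/B(\Q_p))_{\et}$ down to the \emph{analytic} site of $\mathbb{P}^1_{\C_p}$: by Proposition \ref{affinoidfibres} the fibres of $\overline{\pi}_{\GH}$ over arbitrary points are affinoid perfectoid, so the higher direct images on the analytic site are almost zero, and $H^i_{an}(\mathbb{P}^1,-)=0$ for $i>1$ since $\mathbb{P}^1$ is a curve. Your two-open cover would play roughly the role of the analytic cohomological-dimension bound, but the essential missing idea is the passage to $\cO^+/p$-coefficients via the primitive comparison theorem; without it the argument collapses.
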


The main ingredient to prove the theorem is the construction of a quotient $\cM_\infty/B(\Q_p)$ of the infinite-level Lubin--Tate tower $\cM_\infty$ by the Borel subgroup $B(\Q_p)$ as a perfectoid space. 

In order to explain our results in more detail we introduce some notation. We fix a connected $p$-divisible group $H/\overline{\mathbb{F}}_p$ of dimension one and height two. Then for any compact open subgroup $K \subset \GL_2(\Q_p)$ we have the Lubin--Tate space $\cM_K$ of level $K$, which is a rigid analytic variety over $\C_p$. Varying $K$ one gets a tower $(\cM_{K})_{K\subset \GL_2(\Q_p)}$ with finite \'etale transition maps and in the limit one has a perfectoid space
\[\cM_\infty \sim \varprojlim_K \cM_K.\]
In particular, $\cM_\infty$ is an adic space and an object of Huber's ambient category $\mathcal{V}$. The group $\GL_2(\Q_p)$ acts on $\cM_\infty$. We have a decomposition 
\[\cM_\infty \cong \bigsqcup_{i \in \Z}\cM^{(i)}_\infty\]
and for any $i \in \Z$, there is a non-canonical isomorphism $\cM^{(i)}_\infty \cong \cM^{(0)}_\infty$. The stabilizer in $\GL_2(\Q_p)$ of $\cM^{(0)}_\infty$ is given by 
\[G':=\{g \in \GL_2(\Q_p) : \det(g) \in \Z_p^*\}.\]
As we recall in Section \ref{subsec: quot} we can form a quotient $\cM_\infty/B(\Q_p)$ in the category $\mathcal{V}$. The key to proving Theorem \ref{mainthm} is the following.

\begin{thmn}[\ref{secmainthm}] The object 
\[\cM_{\infty}/B(\Q_p)\]
of $\mathcal{V}$ is a perfectoid space. 
\end{thmn}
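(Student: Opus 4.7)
The plan is to use the Hodge--Tate period morphism to reduce the quotient problem to a quotient over the Drinfeld upper half plane, and to build $\cM_\infty/B(\Q_p)$ as a perfectoid space via explicit affinoid perfectoid charts.

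First I would reduce to a single component: since $B(\Q_p)$ acts on $\pi_0(\cM_\infty)\cong\Z$ through the surjection $v_p\circ\det:B(\Q_p)\twoheadrightarrow\Z$, there is a canonical isomorphism $\cM^{(0)}_\infty/B^0\xrightarrow{\sim}\cM_\infty/B(\Q_p)$ in $\mathcal{V}$, where $B^0:=B(\Q_p)\cap G'$, so it suffices to exhibit $\cM^{(0)}_\infty/B^0$ as a perfectoid space.

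Via the Scholze--Weinstein isomorphism with the infinite-level Drinfeld tower, one has the $G'$-equivariant Hodge--Tate period morphism $\pi_{HT}:\cM^{(0)}_\infty\to\Omega\subset\mathbb{P}^1_{\C_p}$, with image the Drinfeld upper half plane $\Omega=\mathbb{P}^1_{\C_p}\setminus\mathbb{P}^1(\Q_p)$. Since the unique $B(\Q_p)$-fixed point of $\mathbb{P}^1_{\C_p}$ lies in $\mathbb{P}^1(\Q_p)$, the $B^0$-action on $\Omega$ is fixed-point-free. I would cover $\Omega$ by the $B(\Q_p)$-translates of an affinoid subdomain $V$ with compact-open stabilizer $B_V\subset B^0$ --- for example, a Bruhat--Tits tube around a vertex --- and pull back along $\pi_{HT}$ to obtain a $B^0$-stable affinoid perfectoid cover of $\cM^{(0)}_\infty$ whose pieces $\pi_{HT}^{-1}(V)$ each have compact-open stabilizer in $B^0$. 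The quotient $\cM^{(0)}_\infty/B^0$ is then covered by the quotients $\pi_{HT}^{-1}(V)/B_V$, which should each be affinoid perfectoid, and it remains to glue the $B^0$-translates canonically in $\mathcal{V}$.

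The main obstacle is verifying that $\pi_{HT}^{-1}(V)/B_V$ is itself affinoid perfectoid. Note that the intermediate quotients $\pi_{HT}^{-1}(V)/B_V^{(n)}$ at finite levels $B_V^{(n)}\subset B_V$ are merely rigid analytic affinoids rather than perfectoid, so the perfectoid structure of the full quotient must be extracted directly --- likely by exhibiting a $B_V$-invariant perfectoid Tate subalgebra of $\cO(\pi_{HT}^{-1}(V))$ inheriting enough $p$-power roots from the ambient perfectoid structure, then invoking the perfectoid criterion together with almost purity to identify $\pi_{HT}^{-1}(V)/B_V$ with the corresponding $\Spa$. This descent for a continuous profinite-group action on a perfectoid space, compatibly with the tilt, is the technical heart of the argument; once it is established, the gluing of the $B^0$-translates is automatic from the universal property of quotients in $\mathcal{V}$.
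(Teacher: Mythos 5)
Your reduction to a single component and to the subgroup $B^0 = B(\Q_p)\cap G'$ matches the paper's first move (the paper calls this group $B'$), and the idea of covering by affinoid pieces with compact-open $B$-stabilisers is sound. However, the core of your argument appeals to the Hodge--Tate period map $\pi_{\HT}:\cM^{(0)}_\infty\to\Omega$ on the Lubin--Tate side, and then asserts that the affinoid perfectoid structure on $\pi_{\HT}^{-1}(V)/B_V$ can be extracted ``directly'' by exhibiting an invariant perfectoid subalgebra and invoking almost purity. This is precisely where the proposal has a genuine gap. Almost purity concerns finite \'etale morphisms, while $\pi_{\HT}^{-1}(V)\to\pi_{\HT}^{-1}(V)/B_V$ is pro-finite \'etale of infinite degree, and there is no general reason for the $B_V$-invariants of a perfectoid ring to be perfectoid. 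Moreover, $\pi_{\HT}$ on $\cM^{(0)}_\infty$ exists only at infinite level, so you have no finite-level presentation of $\pi_{\HT}^{-1}(V)/B_V$ as an $\approx$-limit of rigid spaces to which the standard completed-direct-limit criterion applies. This is exactly the step the paper singles out as the most difficult.

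The paper's resolution of that step is a genuinely global one, entirely absent from your proposal: $\cM^{(0)}_\infty$ is embedded into the supersingular locus of the infinite-level modular curve $\cX^*_{\Gamma(p^\infty)}$, and the candidate quotient $U/B(\Z_p)$ is realised as the image of $U$ under the open map $\phi:\cX^*_{\Gamma(p^\infty)}(\epsilon)_a\to\cX^*_{\Gamma_0(p^\infty)}(\epsilon)_a$; perfectoidness of the target is Scholze's theorem on the anticanonical $\Gamma_0(p^\infty)$-tower. The Hodge--Tate period map that actually intervenes is the one on the modular curve, used to produce the $B(\Z_p)$-stable affinoid charts, not the one on $\cM^{(0)}_\infty$. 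Once $\cM^{(0)}_\infty/B(\Z_p)$ is known to be perfectoid (Theorem \ref{LTquotient}), the reduction of the $B'$-quotient to the $B(\Z_p)$-quotient is handled not via $\pi_{\HT}$ but via the Gross--Hopkins map $\pi_{\GH,0}$ at level zero: it has local sections, its fibres are $B'/B(\Z_p)$, hence locally $\cM^{(0)}_\infty\cong p_0^{-1}(U)\times^{B(\Z_p)}B'$, and Proposition \ref{nh} turns the $B'$-quotient into $p_0^{-1}(U)/B(\Z_p)$. Your substitution of $\pi_{\HT}$ for $\pi_{\GH,0}$ therefore both loses the finite-level control that makes the quotient tractable and omits the modular-curve input that makes the key local statement true.
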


The proof of this theorem is done in three steps, the first step is the most difficult one.
There one constructs a candidate perfectoid space $\cM_B$ for the quotient $\cM^{(0)}_\infty/B(\Z_p)$ using infinite-level modular curves, the geometry of the Hodge-Tate period map and various other results of \cite{torsion}. 
\begin{thmn}[\ref{cover}]
 There exists a unique (up to unique isomorphism) perfectoid space 
\[\cM_B \sim \varprojlim \cM^{(0)}_{m}/B(\Z/p^m\Z).\]
\end{thmn}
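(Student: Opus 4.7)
The plan is to build $\cM_B$ inside the infinite-level modular curve of \cite{torsion}, where the Hodge--Tate period map provides enough global structure to take the $B(\Z_p)$-quotient cleanly. First, I would invoke the $p$-adic Serre--Tate uniformization to identify the preimage of the supersingular locus in the infinite-level modular curve $\cX_\infty$ with a disjoint union of copies of $\cM_\infty^{(0)}$, compatibly with the $\GL_2(\Z_p)$-action. This reduces the construction of $\cM_B$ to producing a perfectoid quotient by $B(\Z_p)$ of an appropriate $\GL_2(\Z_p)$-stable open subspace of $\cX_\infty$.

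The key tool is the $\GL_2(\Q_p)$-equivariant Hodge--Tate period map $\pi_{\HT} \colon \cX_\infty \to \Fl = \mathbb{P}^1_{\C_p}$ from \cite{torsion}, together with the fact that $\pi_{\HT}^{-1}(V)$ is affinoid perfectoid for a basis of sufficiently small affinoids $V \subset \Fl$. Since $B(\Q_p)$ fixes the point $\infty \in \Fl$ and acts on the affine cell $\A^1 = \Fl \setminus \{\infty\}$ by affine transformations (with $B(\Z_p)$ acting by $\Z_p$-affine maps), one can choose a cover of $\Fl$ by $B(\Z_p)$-stable affinoids $V$ whose preimages $U = \pi_{\HT}^{-1}(V)$ are simultaneously affinoid perfectoid and $B(\Z_p)$-stable.

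The heart of the argument is to show that for each such $U = \spa(R,R^\circ)$ the ring of invariants $R^{B(\Z_p)}$ is a perfectoid algebra and that $\spa(R^{B(\Z_p)}, R^{B(\Z_p),\circ}) \to U$ realizes the quotient in $\mathcal{V}$. I would attack this by combining the explicit coordinate description of $\pi_{\HT}$ near the distinguished fixed orbit with the almost-purity and tilting techniques of \cite{torsion}: the point is that the $B(\Z_p)$-action is essentially affine-linear in the Hodge--Tate coordinates, so taking invariants can be controlled via a standard perfectoid cover. Gluing the resulting affinoid perfectoid quotients over $\Fl$ then produces a perfectoid space $\cX_\infty/B(\Z_p)$, and $\cM_B$ is cut out as the open subspace corresponding to the supersingular tube.

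The tilde-limit property $\cM_B \sim \varprojlim_m \cM_m^{(0)}/B(\Z/p^m\Z)$ should follow from the construction: $B(\Z_p) = \varprojlim_m B(\Z/p^m\Z)$ and $\cM_\infty^{(0)} \sim \varprojlim_m \cM_m^{(0)}$, and uniqueness up to unique isomorphism is part of the general formalism for tilde-limits. The principal obstacle is the perfectoidness of $R^{B(\Z_p)}$ in the heart of the argument: this is a subtle convergence statement that requires a delicate local analysis rather than a formal argument, and I expect it to be where the bulk of the technical work lies.
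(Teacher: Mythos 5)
Your overall framework matches the paper's: embed $\cM_\infty^{(0)}$ into the supersingular locus of the infinite-level modular curve, use the $\GL_2(\Q_p)$-equivariance of $\pi_{\HT}$ and the fact that $B(\Z_p)$-stable affinoids in $\mathbb{P}^1$ (e.g.\ the annuli $X_n$) pull back to $B(\Z_p)$-stable affinoid perfectoids, and then take local quotients. So far so good. But you have correctly identified the crux of the matter --- showing that the local rings of invariants are perfectoid --- and this is exactly where your proposal and the paper diverge, and where your approach has a real gap.

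You propose to show $R^{B(\Z_p)}$ is perfectoid by ``combining the explicit coordinate description of $\pi_{\HT}$ near the distinguished fixed orbit with the almost-purity and tilting techniques,'' on the grounds that ``the $B(\Z_p)$-action is essentially affine-linear in the Hodge--Tate coordinates.'' This is a direct, hard-analysis attack, and it is not at all clear it can be carried out: $B(\Z_p)$ is an infinite profinite group (it is a compact $p$-adic Lie group, not finite), so the invariants are not controlled by almost purity in any straightforward way, and one needs some global input to see that the quotient is again a reasonable uniform affinoid. The paper avoids this entirely by observing that the candidate quotient has \emph{already been constructed} in Scholze's work \cite{torsion}: on the anticanonical locus, the $\Gamma_0(p^\infty)$-level modular curve $\cX^*_{\Gamma_0(p^\infty)}(\epsilon)_a$ is known to be affinoid perfectoid (\cite[Cor.\ III.2.19]{torsion}), and the finite-\'etale quotient maps $\cX^*_{\Gamma(p^m)}(\epsilon)_a \to \cX^*_{\Gamma_0(p^m)}(\epsilon)_a$ have Galois group exactly $B(\Z/p^m\Z)$. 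So the local $B(\Z_p)$-quotient is literally an open subset of $\cX^*_{\Gamma_0(p^\infty)}(\epsilon)_a$, obtained as $\phi(U)$ for the open map $\phi\colon \cX^*_{\Gamma(p^\infty)}(\epsilon)_a \to \cX^*_{\Gamma_0(p^\infty)}(\epsilon)_a$; no new perfectoidness statement needs to be proved. For the pieces of $\cM^{(0)}_\infty$ that fall into translates $Y\gamma^i$ outside the anticanonical locus, the paper does not prove a new perfectoid quotient either; it conjugates by $\gamma^{-i}$ to reduce to the anticanonical case, which requires a careful but elementary cofinality argument comparing the towers $(U_m/B(\Z/p^m\Z))$ and $(W_m/H_{i,m})$ with $H_i = \gamma^i B(\Z_p)\gamma^{-i}$. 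Without the identification of the local model with $\cX^*_{\Gamma_0(p^\infty)}(\epsilon)_a$, your sketch is a plan to redo the hardest input of \cite{torsion} from scratch rather than a proof. You also sketch taking a quotient $\cX_\infty/B(\Z_p)$ of the full infinite-level modular curve, which the paper does not do and does not need: the quotient is constructed only locally near the supersingular locus. Finally, the verification of $\cM_B \sim \varprojlim \cM_m^{(0)}/B(\Z/p^m\Z)$ is not purely formal, as you suggest, especially over the $\gamma^i$-translates; the paper spends real effort checking the $\approx$-relation there via the nested inclusions of the $K(m)$'s.
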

In a second step one shows that the candidate space $\cM_B$ agrees with the object $\cM^{(0)}_\infty/B(\Z_p)$ in $\mathcal{V}$. 
Finally one gets from $\cM^{(0)}_\infty/B(\Z_p)$ to a perfectoid space $\cM_\infty/B(\Q_p)$ using the geometry of the Gross--Hopkins period morphism.

The Gross--Hopkins period morphism $\pi_{\GH}:\cM_\infty \rightarrow \mathbb{P}^1_{\C_p}$ factors through the quotient $\cM_\infty/B(\Q_p)$ and we get an induced map
\[\overline{\pi}_{\GH}: \cM_\infty/B(\Q_p)\rightarrow \mathbb{P}^1_{\C_p}\]
with many nice properties (cf.\ Section \ref{subsec:3.6}), e.g.\ it is quasi-compact. 
From a character $\chi:B(\Q_p)\rightarrow \mathbb{F}_q^*$ one can construct an \'etale local system $\cF_\chi$ on $\cM_\infty/B(\Q_p)$ such that  
\[\cF_{\pi}\cong \overline{\pi}_{\GH,*} \cF_\chi,\]
where $\pi:=\Ind^{\GL_2(\Q_p)}_{B(\Q_p)}(\chi)$. We show that all higher direct images $R^i\overline{\pi}_{\GH,*} \cF_\chi$, $i >0$, vanish (Proposition \ref{directimage}) and so
\[\cS^i\left(\Ind^{\GL_2(\Q_p)}_{B(\Q_p)}(\chi)\right) = H^i_{\et}(\cM_\infty/B(\Q_p), \cF_\chi).\]
We see that at the expense of a more complicated space, the sheaf has simplified a great deal. And although the space is more complicated, it is perfectoid. This fact and the good properties of $\overline{\pi}_{\GH}$ make it possible to show vanishing of $\cS^2\left(\Ind^{\GL_2(\Q_p)}_{B(\Q_p)}(\chi)\right)$
in Theorem \ref{mainthm}.

The article is structured as follows. In Section 2 we give some background on inverse limits of adic spaces, quotients of adic spaces and we introduce some auxiliary spaces that we need in the construction of $\cM_\infty/B(\Q_p)$. The quotient is constructed in Section 3. In the last section we prove the vanishing result in cohomological degree two. 

\textbf{Acknowledgments.} The author would like to thank Peter Scholze for suggesting the project underlying this article to her and for sharing his ideas which made this paper possible. She would like to thank him for this constant support, for his patience in answering her many questions and for the careful reading of the manuscript. The author would furthermore like to thank Ana Caraiani for many helpful conversations and Sug Woo Shin for pointing out an error at an earlier stage of the project. Finally the author is very grateful to the anonymous referee for many helpful comments and suggestions. The author was supported by the SFB/TR 45 of the DFG and her current affiliation, the Max Planck Institute for Mathematics. 

\subsection{Notation}
All adic spaces occurring are defined over $(K,\cO_K)$, where $K$ is a non-archimedean field, i.e., a non-discrete complete topological field $K$ whose topology is induced by a non-archimedean absolute value $|\cdot|:K\rightarrow \R_{\geq 0}$. Throughout we fix a topologically nilpotent unit $\varpi \in K^*$. In particular all adic spaces considered in this article are analytic. We will often use that if $X$ and $Y$ are affinoid analytic adic spaces and $f:X\rightarrow Y$ is a morphism, then the preimage of a rational subset $U\subset Y$ is a rational subset in $X$. As is usual, for an analytic adic space $X$ we identify its points $x \in X$ with their underlying maps $\spa(K,K^+)\rightarrow X$ without introducing separate notation. In the context of adic spaces we use the terminology introduced in \cite{berkeley}, i.e.\ Huber rings, Huber pairs, etc. 

We use the pro-\'etale site as defined in \cite{scholzepHT}, i.e., in a tower we want the transition maps to eventually be finite \'etale surjective.

\section{Background on adic spaces and quotients}

\subsection{Inverse limits of adic spaces}
\begin{defn}[cf.\ Def. 2.4.1 in \cite{SW}] Let $(X_m)_{m\in I}$ be a cofiltered inverse system of adic spaces with finite \'etale transition maps. Let $X$ be an adic space and let $p_m:X\rightarrow X_m$ be a compatible family of maps. Write 
\[X \sim \varprojlim X_m\]
if the map of underlying topological spaces $|X| \rightarrow \varprojlim |X_m| $ is a homeomorphism and if there is an open cover of $X$ by affinoid subsets $\spa(A,A^+) \subset X$ such that the map 
\[ \varinjlim_{\spa(A_m,A_m^+)\subset X_m} A_m \rightarrow A
\]
has dense image, where the direct limit runs over all open affinoids $\spa(A_m,A_m^+)\subset X_m$ over which $\spa(A,A^+)\subset X \rightarrow X_m$ factors.
\end{defn}

Recall the following result. 
\begin{prop}\label{unique} Let $(K,\mathcal{O}_K)$ be a perfectoid field, $(X_m)_{m \in I}$ a cofiltered inverse system of adic spaces with finite \'etale transition maps. Assume there is a perfectoid space $X$ over $\spa(K,\cO_K)$ such that $X \sim \varprojlim X_m$. Then for any affinoid uniform $(K,\cO_K)$-algebra $(R,R^+)$, we have 
\[X(R,R^+) = \varprojlim X_m(R,R^+).\]   
In particular, if $Y$ is a perfectoid space over $\spa(K,\cO_K)$ with a compatible system of maps $Y \rightarrow X_m$, then $Y$ factors over $X$ uniquely, making $X$ unique up to unique isomorphism.  
\end{prop}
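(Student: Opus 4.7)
The plan is to prove $X(R,R^+) = \varprojlim X_m(R,R^+)$ by constructing, from any compatible system $(f_m: \spa(R,R^+) \to X_m)$, a unique morphism $f: \spa(R,R^+) \to X$ whose composition with each projection $p_m$ recovers $f_m$. The ``in particular'' assertion then follows formally: any perfectoid $Y$ with compatible maps to the $X_m$ can be covered by affinoid perfectoids, on each of which the first part supplies a unique morphism to $X$, and uniqueness allows these to glue to a global morphism $Y \to X$.

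First I would build the underlying continuous map. The topological maps $|f_m|$ are compatible, so they combine into a continuous map $|\spa(R, R^+)| \to \varprojlim |X_m|$, which the $\sim$-limit hypothesis identifies with $|X|$. This tells us in which open affinoids of $X$ the image of each point must lie, reducing the problem to a local construction on structure sheaves.

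The heart of the argument is this local construction. Fix $x \in \spa(R, R^+)$, let $y$ be its image, and pick an affinoid open $U = \spa(A, A^+) \subset X$ around $y$ from the cover guaranteed by the $\sim$-limit definition, so that $\varinjlim A_m \to A$ has dense image, the colimit running over affinoids $\spa(A_m, A_m^+) \subset X_m$ over which $U \to X_m$ factors. Choose a rational subdomain $V = \spa(B, B^+)$ of $\spa(R, R^+)$ containing $x$ and mapping topologically into $U$; after shrinking $V$ we may arrange $f_m(V) \subset \spa(A_m, A_m^+)$ for each relevant $m$. The restrictions $f_m|_V$ induce compatible continuous maps of Huber pairs $(A_m, A_m^+) \to (B, B^+)$, hence a continuous map $\varinjlim A_m \to B$ sending $\varinjlim A_m^+$ into $B^+$. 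Because $B$ is uniform and therefore complete, and $\varinjlim A_m$ has dense image in $A$, this map extends uniquely by continuity to a map of Huber pairs $(A, A^+) \to (B, B^+)$, producing a morphism $V \to U$ of adic spaces.

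What remains is to check that the resulting local morphisms are independent of the choice of $U$, $V$, and $m$, and glue to a global $f: \spa(R, R^+) \to X$ satisfying $p_m \circ f = f_m$; density plus uniform completeness make any continuous extension unique, which propagates to compatibility on overlaps of rational subdomains. I expect the principal obstacle to be exactly this extension step: one must verify that the topology on $\varinjlim A_m$ controlling continuity of the maps to $B$ agrees with the topology pulled back from $A$, which is precisely where the strength of the ``dense image'' clause in the definition of $\sim$-limit, combined with the hypothesis that $(R, R^+)$ is uniform, is indispensable.
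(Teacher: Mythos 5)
Your proposal is essentially the same argument the paper is invoking: the paper simply cites \cite[Prop.\ 2.4.5]{SW} and remarks that the identical proof applies once $R$ (equivalently $R^+$) is complete, and your sketch reconstructs exactly that proof — match up topological spaces via $|X|\cong\varprojlim|X_m|$, then on a rational subdomain $\spa(B,B^+)$ extend the compatible maps $\varinjlim A_m\to B$ across the dense inclusion $\varinjlim A_m\hookrightarrow A$ using completeness. You also correctly put your finger on the crux (that the extension step is where uniformity of $(R,R^+)$ and the dense-image clause in the definition of $\sim$ are really used); one further detail worth making explicit when fleshing this out is why the extended map $A\to B$ carries $A^+$ into $B^+$ — this follows from the fact that, $(B,B^+)$ being a complete Huber pair, $B^+$ is exactly $\{b\in B:|b|_x\le 1\text{ for all }x\in\spa(B,B^+)\}$, and the topological map you built already guarantees the required valuation inequalities.
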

\begin{proof} For $(R,R^+)$ affinoid perfectoid, this is \cite[Prop.\ 2.4.5]{SW}. However the same proof works for any affinoid $(K,\cO_K)$-algebra $(R,R^+)$ with $R$ uniform or what is equivalent, with $R^+$ $\varpi$-adically complete. 
\end{proof}

Consider also the following stronger notion.
\begin{defn} Let $(K,\cO_K)$ be a non-archimedean field. Let 
\[(X_m=\spa(R_m, R^+_m))_{m\in I}\]
be a cofiltered inverse system of affinoid adic spaces over $\spa(K,\cO_K)$ with finite \'etale transition maps. Assume $X=\spa(R,R^+)$ is an affinoid adic space over $\spa(K,\cO_K)$ with a compatible family of maps $p_m: X \rightarrow X_m$.
We write 
\[ X \approx \varprojlim X_m
\] 
if $R^+$ is the $\varpi$-adic completion of $\varinjlim_m R^+_m$. 
\end{defn}
Obviously if $X \approx \varprojlim X_m$ then also $X \sim \varprojlim X_m$. The reason for introducing this stronger notion is the following. When $X \sim \varprojlim X_m$, we do not know whether an arbitrary affinoid open $U=\spa(R,R^+) \subset X$ is the preimage $p_m^{-1}(U_m)$ of an open affinoid subset $U_m =\spa(R_m,R_m^+) \subset X_m$ for sufficiently large $m$ and if it satisfies $R^+ \cong (\varinjlim R^+_m)^{\wedge}$. 
For this reason some of the arguments below become slightly technical.  

We can however always pass to rational subsets. 
\begin{lem} \label{1.1} Let $(X_m=\spa(R_m,R^+_m))_{m\in I}$ be a cofiltered inverse system of affinoid analytic spaces over $\spa(K,\cO_K)$. Assume $X=\spa(R,R^+)$ is an affinoid perfectoid space over $\spa(K,\cO_K)$, with compatible maps $p_m:X \rightarrow X_m$ and such that $X \approx \varprojlim X_m$. Let $U\subset X$ be a rational subset. Then $U=p_{m_0}^{-1}(U_{m_0})$ for a rational subset $U_{m_0} \subset X_{m_0}$ and $m_0$ sufficiently large and 
\[U \approx \varprojlim U_{m_0} \times_{X_{m_0}} X_m.\] 
\end{lem}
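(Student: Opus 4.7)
The plan is to produce, for $m_0$ sufficiently large, a rational subset $U_{m_0}\subset X_{m_0}$ whose preimage in $X$ equals $U$, and then to verify the stronger $\approx$ property for the resulting tower. Write $U=X(f_1/g,\dots,f_n/g)$ with $f_1,\dots,f_n,g\in R$ generating the unit ideal; after multiplying through by a suitable power of $\varpi$ we may assume all of $f_i,g$ lie in $R^+$. Since $R^+=(\varinjlim R^+_m)^\wedge_\varpi$, each of these elements is the $\varpi$-adic limit of elements coming from the $R^+_m$. A standard perturbation lemma for rational subsets of analytic adic spaces guarantees that if the approximation is sufficiently $\varpi$-adically close, then approximating elements $\tilde f_i,\tilde g\in R^+_{m_0}$ define, viewed as functions on $X$, the same rational subset as the original $f_i,g$.

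The main obstacle is to arrange that $\tilde f_1,\dots,\tilde f_n,\tilde g$ generate the unit ideal already in $R_{m_1}$ for some $m_1\geq m_0$, so that they define a genuine rational subset $U_{m_1}\subset X_{m_1}$ rather than merely a subset of $X$. Starting from an identity $\sum_i a_i\tilde f_i+b\tilde g=1$ in $R$, one scales by a power $\varpi^N$ to bring all coefficients into $R^+$, approximates them by elements $\alpha_i,\beta\in R^+_{m_1}$ for $m_1$ large, and arrives at
\[
\sum_i\alpha_i\tilde f_i+\beta\tilde g=\varpi^N(1+\delta)\quad\text{in }R_{m_1},
\]
with $\delta\in\varpi R^+_{m_1}$. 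Completeness of $R^+_{m_1}$ then makes $1+\delta$ a unit, so inverting $\varpi$ places $1$ inside the ideal generated by the $\tilde f_i,\tilde g$ in $R_{m_1}$. Setting $U_{m_1}=X_{m_1}(\tilde f_i/\tilde g)$, the identity $U=p_{m_1}^{-1}(U_{m_1})$ is immediate, and $m_1$ plays the role of the $m_0$ appearing in the statement. The delicate point is that this approximation has to be carried out inside $R^+_{m_1}$, not merely inside $R^+$, which is what forces the possible passage to a larger index.

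It remains to establish $U\approx\varprojlim U_m$, where $U_m:=U_{m_1}\times_{X_{m_1}}X_m=\spa(S_m,S_m^+)$ for $m\geq m_1$ and $U=\spa(S,S^+)$. By the explicit construction of rational localizations, $S^+_m$ is the $\varpi$-adic completion of the integral closure of $R^+_m[\tilde f_i/\tilde g]$ inside $R^+_m[\tilde f_i/\tilde g][1/\varpi]$, and analogously for $S^+$. The operations of forming a polynomial subring and taking integral closure in a Tate ring both commute with filtered colimits; combined transitively with the hypothesis $R^+=(\varinjlim R^+_m)^\wedge_\varpi$, this yields $S^+=(\varinjlim S_m^+)^\wedge_\varpi$, which is exactly the $\approx$ condition.
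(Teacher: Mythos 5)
Your approach matches the paper's: approximate the rational parameters $f_i,g$ by elements $\tilde f_i,\tilde g$ of some $R_{m_0}$ (density of $\varinjlim R_m$ in $R$ plus a perturbation lemma for rational subsets), set $U_{m_0}=X_{m_0}(\tilde f_i/\tilde g)$, and verify the $\approx$ property by comparing the rings of integral elements. Where you go further is in your second paragraph, where you explicitly check that $\tilde f_i,\tilde g$ generate the unit ideal already in $R_{m_1}$ so that $U_{m_1}$ is an honest rational subset of $X_{m_1}$; the paper's proof leaves this tacit.

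However, the step where you ``arrive at'' $\delta\in\varpi R_{m_1}^+$ is not justified as written. Approximating the scaled coefficients $\varpi^N a_i,\varpi^N b$ in $R^+$ by elements $\alpha_i,\beta\in R_{m_1}^+$ gives you $\delta\in\varpi R^+$ (from the closeness of the approximation) and $\delta\in R_{m_1}$ (since $\sum\alpha_i\tilde f_i+\beta\tilde g$ and $\varpi^N$ lie in $R_{m_1}^+$), but $R_{m_1}\cap\varpi R^+$ is in general not contained in $\varpi R_{m_1}^+$. What one actually needs is a descent-of-relations argument modulo $\varpi^{N+1}$: since $\varinjlim R_m^+$ is $\varpi$-torsion-free (it sits inside $R$, where $\varpi$ is a unit) and $R^+$ is its $\varpi$-adic completion, one has $R^+/\varpi^{N+1}\cong\varinjlim_m R_m^+/\varpi^{N+1}$, so the relation $\sum(\varpi^N a_i)\tilde f_i+(\varpi^N b)\tilde g\equiv\varpi^N$ in $R^+/\varpi^{N+1}$ descends, after enlarging $m_1$, to an identical relation in $R_{m_1}^+/\varpi^{N+1}$ with coefficients $\alpha_i,\beta\in R_{m_1}^+$; that is precisely $\delta\in\varpi R_{m_1}^+$. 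This uses the filtered-colimit structure, not merely pointwise approximation of the coefficients; your closing remark about being ``forced to pass to a larger index'' suggests you sensed this, but the mechanism should be made explicit. Alternatively, one can sidestep the generation question entirely: since $g$ is invertible on the quasi-compact subset $U$, there is $M$ with $|g|\geq|\varpi|^M$ on $U$, so $U=X(f_1/g,\dots,f_n/g,\varpi^M/g)$, and with $\varpi^M$ among the numerators the approximating parameters automatically generate the unit ideal in every $R_m$.
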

\begin{proof} We write $U=X\left(\frac{f_1}{g},\cdots, \frac{f_n}{g}\right)$ for some elements $f_i,g \in R$ such that $f_1,..., f_n$ generate $R$. Then for some small open neighbourhood $V$ of $0\in R$, 
the rational subset defined by any choice of functions $f'_i \in f_i +V$, $g'\in g+V$ agrees with $U$. As $\varinjlim R_m$ is dense in $R$, we may assume the $f'_i$ and $g'$ are elements of $R_{m_0}$ for some $m_0$. Define 
\[U_{m_0}:= X_{m_0}\left(\frac{f'_1}{g'}, \cdots, \frac{f'_n}{g'} \right)\cong  \spa\left(R_{m_0}\left\langle \frac{f'_1}{g'}, \cdots, \frac{f'_n}{g'}\right\rangle, \widehat{B}_{m_0}\right),\]
where $\widehat{B}_{m_0}$ is the completion of the integral closure $B_{m_0}$ of $R_{m_0}^+[\frac{f'_1}{g'},\cdots,\frac{f'_n}{g'}]$ in $R_{m_0}[g'^{-1}]$. 
Then 
\[U_m:= U_{m_0}\times_{X_{m_0}} X_m \cong \spa\left(R_{m}\left\langle \frac{f'_1}{g'}, \cdots, \frac{f'_n}{g'}\right\rangle, \widehat{B}_{m}\right)\]
with $B_m$ the obvious ring and
\[U \cong X\left(\frac{f'_1}{g'}, \cdots, \frac{f'_n}{g'}\right) \cong \spa\left(R\left\langle \frac{f'_1}{g'}, \cdots, \frac{f'_n}{g'}\right\rangle, \widehat{B}\right).\]
As $(R,R^+)$ is perfectoid, $\widehat{B}$ is $\varpi$-adically complete. We see that
\[U \cong X\left(\frac{f'_1}{g'}, \cdots, \frac{f'_n}{g'}\right) \cong U_{m_0}\times_{X_{m_0}} X \approx \varprojlim U_{m_0}\times_{X_{m_0}} X_m\] 
as $\varinjlim B_m \subset \widehat{B}$ is dense in the $\varpi$-adic topology. 
\end{proof}

In this article, we call an analytic adic space $X$ over a non-archimedean field $(K,K^+)$ \emph{partially proper} if $X$ is quasi-separated and if for all affinoid $(K,K^+)$-algebras $(k,k^+)$ with $k$ a non-archimedean field and $k^+$ a valuation ring, 
\[X(k,k^+) = X(k,\cO_k). \]
As we want to use this concept in the context of perfectoid spaces, there is no finiteness condition involved. So our definition is slightly weaker than Definition 1.3.3 in \cite{huber}.

From Proposition \ref{unique} above one immediately concludes
\begin{prop}\label{univspec} Let $X \sim \varprojlim X_m$ and assume $X$ is perfectoid. If there exists $m_0$ such that $X_m$ is partially proper for all $m\geq m_0$, then $X$ is partially proper. 
\end{prop}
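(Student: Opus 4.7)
The plan is to verify the two clauses in the definition of partially proper --- quasi-separation and the valuative criterion --- by propagating them from each $X_m$ (for $m\geq m_0$) through the inverse limit.

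For the valuative criterion, fix an affinoid $(K,\cO_K)$-algebra $(k,k^+)$ with $k$ a non-archimedean field and $k^+\subset \cO_k$ a valuation subring. Since $k$ is a non-archimedean field it is automatically uniform (its topology is the one defined by the absolute value, and $\cO_k$ is bounded), so the Huber pairs $(k,k^+)$ and $(k,\cO_k)$ both satisfy the hypothesis of Proposition \ref{unique}. Applying it gives
\[
X(k,k^+)=\varprojlim_m X_m(k,k^+),\qquad X(k,\cO_k)=\varprojlim_m X_m(k,\cO_k).
\]
The cofiltered index set admits $\{m\in I : m\geq m_0\}$ as a cofinal subsystem, so each inverse limit may be computed over that subsystem. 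For $m\geq m_0$, partial properness of $X_m$ says precisely that $X_m(k,k^+)=X_m(k,\cO_k)$, so passing to the limit yields $X(k,k^+)=X(k,\cO_k)$.

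For quasi-separation, use $|X|=\varprojlim|X_m|$ as topological spaces, which is part of $X\sim\varprojlim X_m$. Each $|X_m|$ is a spectral space (as the underlying space of a quasi-separated adic space, which partial properness of $X_m$ in particular implies) and the finite \'etale transition maps induce spectral maps on underlying spaces. A cofiltered inverse limit of spectral spaces along spectral transition maps is again spectral, hence in particular quasi-separated. Since quasi-separation of an analytic adic space is a property of its underlying topological space (intersection of quasi-compact opens being quasi-compact), $X$ is quasi-separated.

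The only step with any real content is the application of Proposition \ref{unique}, where uniformity of $k$ is the key input that lets the two sides of the valuative criterion be computed as inverse limits of the corresponding sets at finite level. Everything else --- the cofinality of $\{m\geq m_0\}$ and the inheritance of quasi-separation from the spectral inverse limit --- is essentially formal once one has that bridge between the perfectoid space $X$ and the finite levels $X_m$.
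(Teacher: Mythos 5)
Your treatment of the valuative criterion is exactly the paper's proof: the paper states that the proposition follows immediately from Proposition \ref{unique}, and your observations --- that $(k,k^+)$ and $(k,\cO_k)$ are uniform affinoid $(K,\cO_K)$-algebras, so Proposition \ref{unique} applies, and that $\{m\geq m_0\}$ is cofinal so the condition $X_m(k,k^+)=X_m(k,\cO_k)$ passes to the limit --- are precisely what makes that one-line deduction work. The paper does not treat quasi-separation separately, and your added paragraph for it contains a slip: a partially proper adic space is typically not quasi-compact (the $\cM_m$ themselves are not), so $|X_m|$ is only locally spectral, not spectral, and you cannot directly invoke the theorem on cofiltered limits of spectral spaces. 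The repair is local: since quasi-compact opens form a basis of each locally spectral $|X_m|$, any quasi-compact open of $|X|=\varprojlim|X_m|$ is a finite union of preimages $p_m^{-1}(W)$ with $W\subset|X_m|$ quasi-compact open; each such $p_m^{-1}(W)$ is the cofiltered limit of the preimages of $W$ at higher finite levels, which are spectral (quasi-compact because the transition maps are finite, and quasi-separated as opens in the quasi-separated $X_{m'}$), hence is itself quasi-compact; pulling two such basic opens to a common finite level and using quasi-separation there then gives quasi-separation of $X$. With that repair the argument is complete, and the essential content coincides with the paper's.
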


\subsection{Quotients in Huber's category $\mathcal{V}$}\label{subsec: quot}

The category $\mathcal{V}$ is defined in \cite{huber1} as follows. Its objects are triples 
\[X=(|X|, \cO_X, \{|\cdot|_x\}_{x \in |X|}),
\] 
where $|X|$ is a topological space, $\mathcal{O}_X$ is a sheaf of complete topological rings on $|X|$ and $|\cdot|_x$ is a valuation on the stalk $\cO_{X,x}:= \varinjlim_{x \in U} \cO_{X}(U)$. A morphism $X \rightarrow Y$ in $\mathcal{V}$ is a pair $(f, \varphi)$ consisting of a continuous map $f:|X| \rightarrow |Y|$ and a morphism $\varphi:\cO_Y \rightarrow f_*\cO_X$ of sheaves of topological rings, such that for every $x \in X$, the induced ring homomorphism $\varphi_x:\cO_{Y,f(x)}\rightarrow \cO_{X,x}$ is compatible with the valuations $|\cdot|_x$ and $|\cdot|_{f(x)}$.

Let $X \in \mathcal{V}$ be an object with a right action of a group $G$, i.e., with a homomorphism  
\[G^{op} \rightarrow Aut_{\mathcal{V}}(X).\]  

Consider the natural continuous and open map $p: |X| \rightarrow |X|/G$.
Define the triple 
\[
X/G:=(|X|/G, (p_*\cO_X)^G, \{|\cdot|_{\overline{x}}\}_{\overline{x} \in |X|/G})
\]
where the valuations $|\cdot|_{\overline{x}}$ are defined as 
\[ |\cdot|_{\overline{x}}:\cO_{|X|/G, \overline{x}} \rightarrow \cO_{X,x} \stackrel{|\cdot|_x}{\rightarrow} \Gamma_x\cup \{0\}
\]
where $|\cdot|_x$ is the valuation of any preimage $x \in p^{-1}(\overline{x})$. 
Then this defines an object of~$\mathcal{V}$. Indeed $(p_*\cO_X)^G$ is again a sheaf as taking $G$-invariants is left exact. Furthermore for any open subset $U \subset |X|/G$, 
\[(p_*\cO_X)^G(U)= \cO_X(p^{-1}(U))^G \subset \cO_X(p^{-1}(U))
\] 
is closed, therefore complete. 
One checks that $X/G$ is the categorical quotient of $X$ in $\mathcal{V}$. 

\subsection{Quotients of adic spaces}
\label{subsec:quotients}

We specialize to the case we are interested in, which is when $X$ is an analytic adic space over $\spa(K,\cO_K)$ for a non-archimedean field $K$, and mostly either a rigid space or a perfectoid space. In this section we also assume that $K$ is of characteristic zero.
Let $G$ be a locally profinite group. Below we want our actions to be continuous in the following sense. 

\begin{defn}[cf.\ {\cite[Def.\ 2.1]{scholzeLT}}]\label{1.8} Let $G$ be a locally profinite group and let $X$ be an analytic adic space. An action of $G$ on $X$ is said to be continuous if there exists a cover of $X$ by affinoid open subspaces $\spa(A,A^+)\subset X$ invariant under the action of open subgroups $H_A\subset G$ such that the action morphism $H_A\times A \rightarrow A$ is continuous.
\end{defn}

\begin{lem} Let $X$ be an analytic adic space and $G$ a finite group with an action of $G$ on $X$. Assume that $X$ has a cover by $G$-stable affinoid open subsets $\spa(A,A^+)$. Then $X/G$ is an analytic adic space. The quotient $\spa(A,A^+)/G$ is affinoid and given by $\spa(A^G, {A^+}^G)$.  
\end{lem}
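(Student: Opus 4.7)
The plan is to reduce to the affinoid case and then check that $\spa(A^G, {A^+}^G)$ realises the categorical quotient in $\mathcal{V}$. Since $X$ admits a cover by $G$-stable affinoid opens $U_i = \spa(A_i, A_i^+)$, the continuous open map $|X| \to |X|/G$ restricts to an open cover of $|X|/G$ by the images $|U_i|/G$. Granting the affinoid assertion, the charts $\spa(A_i^G, (A_i^+)^G)$ glue along the $G$-stable rational overlaps $U_i \cap U_j$ to endow $X/G$ with the structure of an analytic adic space refining the $\mathcal{V}$-quotient; so the whole content is in the affinoid case.

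So suppose $U = \spa(A,A^+)$ is $G$-stable. I would first check that $(A^G, {A^+}^G)$ is a Huber pair: equipping $A^G$ with the subspace topology, a pair of definition $(A_0, I)$ for $A$ yields the pair of definition $(A_0 \cap A^G, I \cap A^G)$ for $A^G$, and $A^G$ is complete because it is closed in $A$ (being the equaliser of the continuous endomorphisms $g \in G$). Because $K$ has characteristic zero, the Reynolds operator $R(a) := |G|^{-1}\sum_{g\in G} g(a)$ is a continuous $A^G$-linear retraction $A \twoheadrightarrow A^G$, which will streamline the rest. Next, every $a\in A$ is a root of $\prod_{g\in G}(T-g(a)) \in A^G[T]$, so $A/A^G$ is finite and integral, and similarly $A^+/{A^+}^G$ is integral; in particular ${A^+}^G$ is a ring of integral elements in $A^G$.

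I would then analyse the natural map $\pi : \spa(A,A^+) \to \spa(A^G, {A^+}^G)$. Because the extension is integral, every continuous valuation on $A^G$ lifts to $A$ (Bourbaki-style going-up, in the form proved by Huber for adic spectra), and two lifts differ by an element of $G$ (transitivity of $G$ on valuations over a fixed base valuation). Thus $\pi$ is surjective, $G$-invariant, and its fibres are precisely $G$-orbits. Combined with the fact that $\pi$ is finite, hence closed and open onto its image, this identifies $|\spa(A^G,{A^+}^G)|$ topologically with the quotient $|U|/G$.

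Finally I would match the structure sheaves. A rational subset $V = \spa(A^G, {A^+}^G)(f_1/g,\dots,f_n/g) \subset \spa(A^G,{A^+}^G)$ pulls back to the $G$-stable rational subset $\pi^{-1}(V) = \spa(A,A^+)(f_1/g,\dots,f_n/g)$, whose ring of functions is the completion of $A[1/g]$ (with the appropriate integral subring). Taking $G$-invariants commutes with this localisation and completion — cleanly via the Reynolds operator, which is $A^G$-linear and continuous — so $\cO_{\spa(A^G,{A^+}^G)}(V) = \cO_U(\pi^{-1}(V))^G$. Since such $V$ form a basis, the sheaves $\cO_{\spa(A^G,{A^+}^G)}$ and $(\pi_*\cO_U)^G$ coincide, and the valuations at a point $\bar{x} \in |U|/G$ match by construction via any lift $x$. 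This shows $U/G \cong \spa(A^G,{A^+}^G)$ in $\mathcal{V}$, as required.

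The main obstacle is the topological identification $|U|/G \cong |\spa(A^G,{A^+}^G)|$: one needs the going-up theorem for continuous valuations on integral extensions of Huber rings together with the transitivity of $G$ on the fibres, neither of which is formal. Once that is in place, verifying the Huber-pair axioms for $(A^G,{A^+}^G)$ and matching rational localisations (using the Reynolds operator) is essentially mechanical.
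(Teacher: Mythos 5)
The paper does not actually prove this lemma: it is quoted verbatim from Hansen's work on quotients of adic spaces by finite groups (Theorems 1.1 and 1.2 of \cite{hansen2}), and the proof given in the text is just that citation. Your sketch therefore attempts something the paper outsources, and it follows essentially the standard line of Hansen's argument, simplified by the characteristic-zero hypothesis (which is indeed in force in this section, so the Reynolds operator $R=|G|^{-1}\sum_g g$ is available; Hansen's theorems do not assume this and consequently have to work harder on the sheaf-theoretic side). The reduction to the affinoid case, the verification that $(A^G,(A^+)^G)$ is a complete Huber pair, the integrality of $A$ over $A^G$ via $\prod_{g}(T-g(a))$, and the identification of rational localizations of $A^G$ with $G$-invariants of rational localizations of $A$ via the $A^G$-linear continuous retraction $R$ are all correct and, as a bonus, yield sheafiness of $A^G$ from that of $A$.

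Two points deserve more care. First, you correctly isolate the crux --- that every continuous valuation of $(A^G,(A^+)^G)$ lifts to $(A,A^+)$ and that $G$ acts transitively on the fibres --- but this is precisely the nontrivial content of Hansen's Theorem 1.2, not an off-the-shelf statement of Huber's; one has to extend the valuation integrally (Chevalley), check continuity and the condition $w(A^+)\le 1$ using integrality of $A^+$ over $(A^+)^G$, and prove conjugacy of two extensions. Second, the assertion that $\pi$ is ``finite, hence closed and open onto its image'' is the genuinely shaky step: finite morphisms are not open in general, and for a non-noetherian Huber ring $A$ integrality over $A^G$ does not give module-finiteness, so ``finite'' in Huber's technical sense is not available either. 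What one actually needs is that the continuous bijection $|U|/G\to|\spa(A^G,(A^+)^G)|$ is a homeomorphism; the standard repairs are either to show $\pi$ is quasi-compact and specializing (hence closed, hence a quotient map whose fibres are the $G$-orbits), or to observe that every $G$-stable open of $U$ is covered by $G$-stable rational subsets which can be cut out by $G$-invariant parameters (norms of the defining functions) and hence descend to rational subsets of $\spa(A^G,(A^+)^G)$. With either fix in place the rest of your argument goes through.
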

\begin{proof} This is Theorem 1.1 and Theorem 1.2 in \cite{hansen2}.
\end{proof}

\begin{lem} Let $(K,\cO_K)$ be an affinoid perfectoid field. Let $G$ be a finite group acting on an affinoid perfectoid space $X=\spa(R,R^+)$ for some affinoid perfectoid $(K,\cO_K)$-algebra $(R,R^+)$. Then $X/G$ is again affinoid perfectoid and given by $\spa(R^G, {R^+}^G)$.
\end{lem}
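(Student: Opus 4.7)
The previous lemma already gives $X/G=\spa(R^G,R^{+,G})$ the structure of an affinoid analytic adic space, so what remains is to check that $(R^G,R^{+,G})$ is perfectoid. The elementary part is immediate: since $G$ acts by continuous $K$-algebra automorphisms, $R^G\subset R$ is closed, hence a complete uniform Banach $K$-algebra with $(R^G)^\circ=R^\circ\cap R^G$ bounded, and $R^{+,G}=R^+\cap R^G$ is the associated ring of integral elements.

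For the perfectoid property, namely the Frobenius isomorphism $R^{+,G}/\varpi\xrightarrow{\sim}R^{+,G}/\varpi^p$, my plan is to pass through the tilt. The $G$-action extends to the perfectoid $K^\flat$-algebra $(R^\flat,R^{\flat,+})$, and because $R^{\flat,+}$ is perfect of characteristic $p$ the invariants $(R^\flat)^G$ are automatically perfect: the unique $p$-th root of a $G$-invariant element is itself $G$-invariant by uniqueness. Together with $\varpi^\flat$-adic completeness (as a closed subring of a complete ring) and $\cO_{K^\flat}$-flatness (a submodule of a torsion-free module), this makes $((R^\flat)^G,(R^{\flat,+})^G)$ a perfectoid affinoid $(K^\flat,\cO_{K^\flat})$-algebra.

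Let $(S,S^+)$ denote its untilt. The inclusion $(R^\flat)^G\hookrightarrow R^\flat$ untilts to a $G$-equivariant continuous map $(S,S^+)\to(R,R^+)$ with trivial $G$-action on the source, hence factoring through $(R^G,R^{+,G})$. I would then identify $(S,S^+)$ with $(R^G,R^{+,G})$ by comparing reductions modulo $\varpi$: on the one hand, $S^+/\varpi$ matches $(R^{\flat,+}/\varpi^\flat)^G$ under tilting; on the other, $R^{+,G}/\varpi$ injects into $(R^+/\varpi)^G\cong(R^{\flat,+}/\varpi^\flat)^G$ (using that $R^+$ is $\varpi$-torsion-free, so $R^{+,G}\cap\varpi R^+=\varpi R^{+,G}$).

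The main obstacle is matching the two images, i.e.\ the (almost) surjectivity of $R^{+,G}/\varpi\twoheadrightarrow(R^+/\varpi)^G$, equivalently the almost vanishing of the first group cohomology $H^1(G,R^+)$ controlling when a $G$-invariant element mod $\varpi$ lifts to an honest $G$-invariant element. When $p\nmid|G|$ this is immediate from the averaging idempotent $|G|^{-1}\sum_g g\in R^+$; in general $|G|$ is only almost invertible in $R^+$, and one must carry out the averaging in the almost category, using that $|G|^{-1}$ becomes almost integral against $\varpi^{1/p^\infty}$ thanks to the perfectoid structure. Once this is in place, the factorization $(S,S^+)\to(R^G,R^{+,G})$ is seen to be an isomorphism and the perfectoid property for $(R^G,R^{+,G})$ follows.
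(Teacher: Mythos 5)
The paper offers no argument here: it simply cites Theorem 1.4 of Hansen's paper \cite{hansen2}. So your proposal is a genuine from-scratch attempt, and the tilting framework you set up is reasonable. The preliminary reductions are fine: $R^G$ is a closed, hence complete and uniform, subring; $(R^\flat)^G$ is perfect by uniqueness of $p$-th roots in the reduced ring $R^\flat$; it is $\varpi^\flat$-adically complete and torsion-free, hence a perfectoid $K^\flat$-algebra; and you correctly isolate the crux, namely the (almost) surjectivity of $R^{+,G}/\varpi \to (R^+/\varpi)^G$, i.e.\ the almost vanishing of the obstruction in $H^1(G,R^+)$.

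The gap is in your resolution of that crux. You claim that when $p \mid |G|$ one can still carry out the averaging $\lvert G\rvert^{-1}\sum_{g\in G}g$ ``in the almost category, using that $\lvert G\rvert^{-1}$ becomes almost integral against $\varpi^{1/p^\infty}$.'' This is false. If $p\mid\lvert G\rvert$ then $\lvert G\rvert$ has a fixed positive $p$-adic valuation $v>0$; for $\lvert G\rvert^{-1}$ to be almost integral one would need $\varpi^{\epsilon}\lvert G\rvert^{-1}\in\cO_K$ for all $\epsilon>0$, i.e.\ $\mathrm{val}(\lvert G\rvert)\le\epsilon\cdot\mathrm{val}(\varpi)$ for all $\epsilon$, which forces $v=0$. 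The perfectoid structure provides arbitrarily small positive powers of $\varpi$, but this does nothing to make $p$ (or $\lvert G\rvert$) almost a unit; an element of $\cO_K$ is almost invertible if and only if it is actually a unit. So the averaging idempotent does not exist, even almost, and this step of your argument collapses exactly in the case that matters. (Note also that the obstruction is genuinely subtle: it is closely related to the circle of ideas around almost purity and the direct summand conjecture for perfectoid rings. The actual proof in \cite{hansen2} does not proceed by averaging, and you should not expect an elementary fix along these lines; if you want a self-contained argument you would need to invoke something like almost purity for the quotient map, which in turn requires analyzing where the $G$-action is free, or else appeal to a structural result about invariants of perfectoid rings.)
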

\begin{proof} This is Theorem 1.4 in \cite{hansen2}.
\end{proof}

\begin{lem}\label{free}
Let $X$ be a rigid space or a perfectoid space over $\spa(K,\cO_K)$ and assume a finite group $G$ acts on $X$. Assume furthermore that $X\rightarrow X/G$ is finite \'etale Galois with Galois group $G$. Then
\[X\times G \cong X\times_{X/G} X.\]
\end{lem}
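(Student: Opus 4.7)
The plan is to reduce the statement to the classical characterization of finite étale $G$-Galois extensions of rings. The claim is local on the base $X/G$, so by the previous lemmas we may assume $X/G = \spa(A, A^+)$ is affinoid (rigid or perfectoid), that $X = \spa(B, B^+)$ is affinoid with $A = B^G$ and $A^+ = (B^+)^G$, and that $A \to B$ is finite étale of degree $|G|$ with a compatible $G$-action.

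Under this reduction, $X \times G$ is simply the disjoint union $\bigsqcup_{g \in G} X$, corresponding to the ring $\prod_{g \in G} B$. On the other hand, since $A \to B$ is finite étale, $B$ is a finitely generated projective $A$-module, so the algebraic and completed tensor products agree and the fiber product $X \times_{X/G} X$ is affinoid with ring $B \otimes_A B$ (and integral structure obtained from $B^+ \otimes_{A^+} B^+$ via integral closure). The candidate isomorphism is induced by the ring map
\[
\Phi : B \otimes_A B \longrightarrow \prod_{g \in G} B, \qquad b_1 \otimes b_2 \longmapsto \bigl(b_1 \cdot g(b_2)\bigr)_{g \in G},
\]
which corresponds geometrically to $(x, g) \mapsto (x, g \cdot x)$.

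The substantive step is to show that $\Phi$ is an isomorphism. This is a standard fact about $G$-Galois étale extensions of rings (cf.\ SGA 1, or the usual treatment of Galois covers): for a finite étale extension $A \to B$ of degree $|G|$ with $G$-action such that $A = B^G$, the map $B \otimes_A B \to \prod_{g \in G} B$ above is automatically an isomorphism. To see this concretely, one observes that $\Phi$ is a map of finite projective $A$-modules of the same rank $|G|^2$ (on each side), so it suffices to check it is surjective, and surjectivity can be checked after faithfully flat base change; pulling back along $A \to B$ trivializes both sides compatibly, reducing to the tautological statement that $B \otimes_A B \otimes_A B \cong \prod_g (B \otimes_A B)$ via the diagonal/Galois action on the middle factor.

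The only real obstacle is bookkeeping for the integral subrings $A^+, B^+$: one must verify that $\Phi$ carries the integral closure of $B^+ \otimes_{A^+} B^+$ inside $B \otimes_A B$ isomorphically onto $\prod_{g} B^+$. Since $B^+$ is the integral closure of $A^+$ in $B$ (by the description of $X/G$ in the earlier lemmas) and since $G$ acts through isomorphisms of $(B, B^+)$, this is a routine check: each component $b_1 \cdot g(b_2)$ is integral over $A^+$, and conversely the idempotents cutting out the factors of $\prod_g B$ are themselves integral. With this handled, $\Phi$ upgrades to an isomorphism of the full Huber pairs and hence of affinoid adic spaces, completing the proof.
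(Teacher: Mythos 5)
Your proposal is correct and follows essentially the same route as the paper: reduce to the affinoid case, use the standard isomorphism $B \otimes_{B^G} B \cong \prod_{g\in G} B$, $b_1\otimes b_2 \mapsto (b_1\cdot g(b_2))$, characterizing finite \'etale $G$-Galois covers, and then verify that the integral closure of $B^+\otimes_{(B^+)^G}B^+$ maps onto $\prod_g B^+$ by checking surjectivity componentwise and using that $B^+$ is integrally closed. The paper simply takes the ring-level isomorphism as the defining property of a Galois cover rather than rederiving it by rank and descent, but the substance is identical.
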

\begin{proof}We may assume $X=\spa(A,A^+)$ is affinoid. As $X \rightarrow X/G$ is a finite \'etale Galois cover with Galois group $G$ the map 
\begin{eqnarray*}
\varphi: A \otimes_{A^G} A &\rightarrow &\prod_{g \in G} A_{(g)}\\
a \otimes b &\mapsto & (a (g\cdot b))
\end{eqnarray*}
is an isomorphism. 
The image of $A^+\otimes_{(A^+)^G} A^+$ is contained in $\prod A^+$ and as $A^+$ is integrally closed in $A^\circ$, so is the image of the integral closure $(A^+\otimes_{(A^+)^G}A^+)^{int}$, in fact $\varphi((A^+\otimes_{(A^+)^G}A^+)^{int}) = \prod A^+$ as surjectivity can be checked componentwise.   
\end{proof}	

\subsection{Some auxiliary spaces}

\begin{prop}\label{sim} Let $(R,R^+)$ be a perfectoid $(K, \cO_K)$-algebra. Assume $P$ is a profinite set. Then
\[(S= \Map_{\mathrm{cont}}(P, R), \ S^+=\Map_{\mathrm{cont}}(P, R^+))\]
is again a perfectoid $(K,\cO_K)$-algebra. 
Let $X= \spa(R,R^+)$, then we define
\[X\times \underline{P} := \spa(S,S^+).\]
If we write $P = \varprojlim_m P_m$ for finite sets $P_m$, then 
\[X \times \underline{P} \approx \varprojlim X \times P_m. \]
\end{prop}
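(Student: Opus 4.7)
The plan is to first identify $S^+$ explicitly as the $\varpi$-adic completion of the filtered colimit $\varinjlim_m (R^+)^{P_m}$, then use this description to verify the perfectoid axioms and obtain the $\approx$-statement essentially for free.

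Write $P = \varprojlim_m P_m$ with each $P_m$ finite. Since $P$ is compact and $R^+$ is $\varpi$-adically complete, any continuous map $f\colon P \to R^+$ is uniformly continuous with respect to each $\varpi^n$-adic open neighborhood of the diagonal, so $f \bmod \varpi^n$ factors through some $P_{m(n)}$. Taking a limit over $n$, I would establish
\[ S^+ = \Map_\cont(P, R^+) \;\cong\; \bigl(\varinjlim_m (R^+)^{P_m}\bigr)^{\wedge}_\varpi, \]
where $(R^+)^{P_m} \hookrightarrow (R^+)^{P_{m'}}$ is pullback along $P_{m'} \twoheadrightarrow P_m$. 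Inverting $\varpi$ and using that continuous maps from the compact space $P$ into $R = \bigcup_N \varpi^{-N} R^+$ are bounded, hence land in some $\varpi^{-N} R^+$, yields $S = S^+[1/\varpi] = \Map_\cont(P,R)$, with $\varpi$ a topologically nilpotent unit of $S$.

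With this presentation in hand I would verify that $(S,S^+)$ is perfectoid. Each $(R^+)^{P_m}$ is a finite product of copies of $R^+$, hence integrally perfectoid. Filtered colimits commute with reduction modulo $\varpi$ and preserve surjectivity, so the Frobenius is surjective on $\varinjlim_m (R^+)^{P_m}/\varpi$; since $\varpi$-adic completion does not change the reduction mod $\varpi$, the Frobenius on $S^+/\varpi$ is surjective as well. The ring $S^+$ is open and $\varpi$-adically complete by construction, and it is bounded in $S$ because any $f \in S^\circ$ satisfies $|f(x)|_x \leq 1$ for every $x \in P$ and so lies in $S^+$. Combined with $\varpi^p \mid p$ inherited from $\cO_K$, this shows $(S,S^+)$ is a perfectoid affinoid $(K,\cO_K)$-algebra.

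Finally, for the $\approx$-statement, interpret $X \times P_m$ as the finite disjoint union $\bigsqcup_{p \in P_m} X = \spa\bigl((R^+)^{P_m}[1/\varpi], (R^+)^{P_m}\bigr)$; the transition maps induced by $P_{m'} \twoheadrightarrow P_m$ are finite \'etale, being disjoint unions of identity maps between copies of $X$. The identification of $S^+$ with the $\varpi$-adic completion of $\varinjlim_m (R^+)^{P_m}$ proved in the first paragraph is then, by definition, exactly the assertion $X \times \underline{P} \approx \varprojlim_m X \times P_m$. The main obstacle is the first step: checking carefully via uniform continuity on the compact set $P$ that continuous functions to $R^+$ are $\varpi$-adic limits of locally constant ones; once this is settled, both perfectoidness and the approximation follow by formal manipulations with colimits and completions.
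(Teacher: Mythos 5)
Your proof follows the same route as the paper: identify $S^+$ with the $\varpi$-adic completion of $\varinjlim_m \Map(P_m,R^+)$ (the paper phrases this as density of the locally constant maps in $S^+$), check perfectoidness from this presentation, and read off the $\approx$-statement directly. The paper treats the perfectoid verification with a ``one easily checks'' and adds an explicit identification of underlying topological spaces, but the substance is the same. One small slip worth fixing: from $f\in S^\circ$ you can only deduce $f(x)\in R^\circ$ for all $x\in P$ (not $f(x)\in R^+$), since power-boundedness sees only the rank-one part of the valuations; so the correct intermediate statement is $S^\circ\subseteq\Map_\cont(P,R^\circ)$, which is still bounded because $R^\circ$ is, and that is what you need for uniformity of $S$. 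As stated your argument would wrongly force $S^\circ=S^+$, which fails already for $(R,R^+)=(\C_p,\Z_p+\mathfrak m_{\C_p})$.
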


\begin{proof} One easily checks that the pair $(S,S^+)$ is perfectoid $(K,\cO_K)$-algebra.
Define $(S_m, S^+_m):= (\mathrm{Map}(P_m,R), \mathrm{Map}(P_m,R^+)) $. Then 
\[\varinjlim S_m = \mathrm{Map}_{loc.cst}(P,R),\]
and 
\[\varinjlim S^+_m = \mathrm{Map}_{loc.cst}(P,R^+).\]
We have that $\varinjlim S^+_m$ is dense in $S^+$ in the $\varpi$-adic topology. 
Now 
\[|\spa(S,S^+)| \cong \varprojlim |\spa(S_m, S^+_m)| \cong \varprojlim |X|\times P_m \cong |X|\times P.\] 
\end{proof}

\begin{prop}\label{rep} Keep the notation from above and let $Y$ be a perfectoid space, then 
\[\Hom(Y, X \times \underline{P}) = \Hom(Y,X) \times \mathrm{Map}_{\mathrm{cont}}(|Y|,P).\]
\end{prop}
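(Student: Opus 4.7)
The plan is to use the presentation $X \times \underline{P} \approx \varprojlim_m X \times P_m$ from Proposition \ref{sim} and reduce to the finite case, where $X \times P_m$ is literally a disjoint union of copies of $X$ indexed by $P_m$.

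First I would establish the identity
\[\Hom(Y, X \times \underline{P}) = \varprojlim_m \Hom(Y, X \times P_m).\]
When $Y = \spa(R,R^+)$ is affinoid perfectoid this is exactly Proposition \ref{unique} applied to the tower $(X \times P_m)_m$ (whose transition maps are finite \'etale, being induced from projections of finite discrete sets). For a general perfectoid $Y$, pick a cover by affinoid perfectoid opens $U_i \subset Y$; the functors $\Hom(-, X \times \underline{P})$ and $\Hom(-, X \times P_m)$ are sheaves on $Y$, and equalizers of sets commute with cofiltered inverse limits, so the identity propagates from the affinoid case to $Y$.

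Next I would analyze the finite case. Since $P_m$ is a finite discrete set, $X \times P_m = \bigsqcup_{p \in P_m} X$ in the category of adic spaces. A morphism $f : Y \to \bigsqcup_{p \in P_m} X$ is equivalent to the data of the clopen decomposition $|Y| = \bigsqcup_{p \in P_m} |f|^{-1}(X_p)$, i.e.\ a locally constant map $\phi_m : |Y| \to P_m$, together with a morphism $Y \to X$ obtained by gluing the restrictions to each clopen piece (this gluing is unambiguous and produces a well-defined morphism because maps to the single affinoid $X$ are a Zariski sheaf on $Y$). Since $P_m$ is discrete, $\Map_{\cont}(|Y|, P_m)$ coincides with the set of locally constant maps, so one obtains a natural bijection
\[\Hom(Y, X \times P_m) = \Hom(Y, X) \times \Map_{\cont}(|Y|, P_m),\]
compatible with the projections $P_{m'} \to P_m$ for $m' \geq m$ (which act trivially on the $\Hom(Y,X)$ factor).

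Finally I would pass to the limit, combining the two steps and pulling the constant factor $\Hom(Y,X)$ out of the inverse limit:
\[\Hom(Y, X \times \underline{P}) = \Hom(Y, X) \times \varprojlim_m \Map_{\cont}(|Y|, P_m) = \Hom(Y, X) \times \Map_{\cont}(|Y|, P),\]
where the last equality is the universal property of $P = \varprojlim_m P_m$ in the category of topological spaces.

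The only real obstacle is Step 1: verifying that Proposition \ref{unique}, stated for affinoid uniform $(R,R^+)$, extends to an arbitrary perfectoid test object $Y$. This is a formal sheaf-theoretic argument, but it needs to be spelled out because the proposition is literally a statement about points $\Hom(\spa(R,R^+), -)$ rather than about the representable functor on all of perfectoid spaces. Steps 2 and 3 are essentially bookkeeping once one notes that $P_m$ is finite and discrete.
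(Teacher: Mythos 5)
Your proof is correct and follows essentially the same route as the paper: reduce to the finite levels $X \times P_m$ via $X \times \underline{P} \approx \varprojlim X \times P_m$ and Proposition~\ref{unique}, decompose $\Hom(Y, X \times P_m)$ using that $P_m$ is finite discrete, then pass to the limit. One small remark on the ``obstacle'' you flag in Step~1: the ``in particular'' clause of Proposition~\ref{unique} already asserts that any perfectoid $Y$ with a compatible system of maps to the $X_m$ factors uniquely through $X$, which is precisely the bijection $\Hom(Y,X) \to \varprojlim \Hom(Y,X_m)$ for arbitrary perfectoid $Y$ — so the affinoid-to-general gluing you sketch is not actually needed as a separate step, though there is of course no harm in spelling it out.
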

\begin{proof} This follows from the previous Proposition and Proposition \ref{unique}. Namely we have
\begin{eqnarray*} \Hom(Y,X\times \underline{P}) &=& \varprojlim \Hom(Y,X\times P_m) \\
&=& \varprojlim (\Hom(Y,X) \times \mathrm{Map}(|Y|,P_m)) \\
&=& \Hom(Y, X) \times \varprojlim \mathrm{Map}(|Y|,P_m)\\
&=& \Hom(Y, X) \times \mathrm{Map}_{\mathrm{cont}}(|Y|,P).
\end{eqnarray*}  
\end{proof}

\begin{lem} We keep the notation from the Proposition \ref{sim}. Consider the rational subset $U= X\left(\frac{f_1}{g},\cdots, \frac{f_n}{g}\right)$ for some elements $f_i,g \in R$ such that $f_1,\dots, f_n$ generate $R$. Then 
\[(X\times \underline{P})\left(\frac{\mathbf{f}_1}{\mathbf{g}},\cdots, \frac{\mathbf{f}_n}{\mathbf{g}}\right) \cong U\times \underline{P},\]
where $\mathbf{f}_i \in \Map_{\mathrm{cont}}(P, R)$ (resp.\ $\mathbf{g} \in \Map_{\mathrm{cont}}(P, R)$) is the constant map with value $f_i \in R$ (resp.\ $g \in R$).  
\end{lem}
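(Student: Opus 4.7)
The plan is to reduce to the finite-level case by using the approximation $X \times \underline{P} \approx \varprojlim_m X \times P_m$ of Proposition \ref{sim} together with the transfer of rational subsets in Lemma \ref{1.1}. The key observation is that the constant maps $\mathbf{f}_i, \mathbf{g}$ already lie in every finite-level ring $S_m = \Map(P_m, R)$, so the ``small perturbation'' step in the proof of Lemma \ref{1.1} is trivial: one may take $f'_i = \mathbf{f}_i$ and $g' = \mathbf{g}$ directly, without any approximation.

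First I would check that the rational subset is well-defined at every level. Since $f_1, \ldots, f_n$ generate $R$, pick $r_i \in R$ with $\sum_i r_i f_i = 1$; then the constant maps $\mathbf{r}_i$ satisfy $\sum_i \mathbf{r}_i \mathbf{f}_i = 1$ in $S$ and in each $S_m$, so the $\mathbf{f}_i$ generate at every level. Setting
\[V_m := (X \times P_m)(\mathbf{f}_1/\mathbf{g}, \ldots, \mathbf{f}_n/\mathbf{g}),\]
Lemma \ref{1.1} (applied with the trivial perturbation) gives
\[(X \times \underline{P})(\mathbf{f}_1/\mathbf{g}, \ldots, \mathbf{f}_n/\mathbf{g}) \approx \varprojlim_m V_m.\]

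Next I would identify $V_m$ componentwise. At finite level $X \times P_m$ is the finite disjoint union $\bigsqcup_{p \in P_m} X$, and $\mathbf{f}_i$ (resp.\ $\mathbf{g}$) equals $f_i$ (resp.\ $g$) on every component, so $V_m = \bigsqcup_{p \in P_m} U = U \times P_m$. Finally, $U$ is a rational subset of an affinoid perfectoid space and hence itself affinoid perfectoid, so Proposition \ref{sim} applied to $U$ gives $U \times \underline{P} \approx \varprojlim_m U \times P_m$. The uniqueness of perfectoid approximations (Proposition \ref{unique}) then identifies the two perfectoid candidates, yielding $(X \times \underline{P})(\mathbf{f}_i/\mathbf{g}) \cong U \times \underline{P}$. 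There is no serious obstacle; the argument is organizational, and the only thing worth pausing on is the transfer of the generating condition from $f_i$ to $\mathbf{f}_i$ at each finite level, which is immediate from lifting the relation $\sum_i r_i f_i = 1$ to constants.
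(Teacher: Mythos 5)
Your proof is correct, and it takes a genuinely different route from the paper's. The paper establishes the identification by a universal property argument: it exhibits a map from the rational localization $(S(\mathbf{f}_i/\mathbf{g}), S^+(\mathbf{f}_i/\mathbf{g}))$ to $(\Map_{\mathrm{cont}}(P, R(f_i/g)), \Map_{\mathrm{cont}}(P, R^+(f_i/g)))$ and then uses Proposition \ref{rep} to show that any morphism to a perfectoid $(A,A^+)$ factoring through the rational subset must in fact factor through the latter pair, so the two satisfy the same universal property. You instead pass to finite level: since the constant functions $\mathbf{f}_i, \mathbf{g}$ already lie in every $S_m=\Map(P_m,R)$, the perturbation step in the proof of Lemma \ref{1.1} is vacuous, so the rational subset descends to $V_m=(X\times P_m)(\mathbf{f}_i/\mathbf{g})$ at each level; identifying $V_m = U\times P_m$ componentwise and applying Proposition \ref{sim} to $U$ itself, both $(X\times\underline{P})(\mathbf{f}_i/\mathbf{g})$ and $U\times\underline{P}$ satisfy $\approx\varprojlim_m U\times P_m$, and Proposition \ref{unique} forces them to be uniquely isomorphic. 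The paper's argument is more self-contained, staying entirely at infinite level and avoiding the approximation machinery; yours makes the underlying geometry more transparent (it literally exhibits both sides as the limit of finite disjoint unions $\bigsqcup_{P_m} U$) and simultaneously verifies the ``topological spaces clearly agree'' step that the paper takes for granted. One small caveat: you invoke Lemma \ref{1.1} ``with trivial perturbation,'' which is really an appeal to the content of its proof rather than its stated conclusion; it would be cleaner to say explicitly that you are following the proof of Lemma \ref{1.1} with $f'_i=\mathbf{f}_i$, $g'=\mathbf{g}$.
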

\begin{proof} It is clear that the underlying topological spaces agree. We show the corresponding perfectoid algebras are isomorphic by showing that the Huber pair
\[(S', S'^+):=\left(\Map_{\mathrm{cont}}\left(P, R\left(\frac{f_1}{g},\cdots, \frac{f_n}{g}\right)\right), \Map_{\mathrm{cont}}\left(P, R^+\left(\frac{f_1}{g},\cdots, \frac{f_n}{g}\right)\right) \right)\]
satisfies the same universal property as the Huber pair 
\[\left(S\left(\frac{\mathbf{f}_1}{\mathbf{g}},\cdots, \frac{\mathbf{f}_n}{\mathbf{g}}\right) , S^+\left(\frac{\mathbf{f}_1}{\mathbf{g}},\cdots, \frac{\mathbf{f}_n}{\mathbf{g}}\right)\right).\]
Clearly we have a map
\[\left(S\left(\frac{\mathbf{f}_1}{\mathbf{g}},\cdots, \frac{\mathbf{f}_n}{\mathbf{g}}\right) , S^+\left(\frac{\mathbf{f}_1}{\mathbf{g}},\cdots, \frac{\mathbf{f}_n}{\mathbf{g}}\right)\right) \rightarrow (S',S'^+).\]
Now let $(A,A^+)$ be perfectoid and assume we have a morphism of Huber pairs 
\[(S,S^+) \rightarrow (A,A^+)\]  
such that map $\varphi: \spa(A,A^+) \rightarrow \spa(S,S^+)$ factors over $(X\times \underline{P})\left(\frac{\mathbf{f}_1}{\mathbf{g}},\cdots, \frac{\mathbf{f}_n}{\mathbf{g}}\right)$. Let $p: X \times \underline{P} \rightarrow X$ denote the projection map (corresponding to the map $(R,R^+) \rightarrow (S,S^+)$ that sends an element $r \in R$ to the constant map with value $r$). Then the morphism 
\[p \circ \varphi : Y:=\spa(A,A^+) \rightarrow \spa(R,R^+)\]
factors over $U$. By the universal property of rational subsets we get a corresponding unique morphism of Huber pairs
\[\varphi_U:\left(R\left(\frac{f_1}{g},\cdots, \frac{f_n}{g}\right), R^+\left(\frac{f_1}{g},\cdots, \frac{f_n}{g}\right)\right) \rightarrow (A,A^+).\]
By the last proposition, $\varphi$ corresponds to a pair 
\[(\varphi_X, \varphi_P) \in \Hom(Y,X) \times \mathrm{Map}_{\mathrm{cont}}(|Y|,P).\]
Then $(S, S^+)\rightarrow (A,A^+)$ factors over the unique morphism 
\[ (S',S'^+) \rightarrow (A,A^+) \]
corresponding to $(\varphi_U,\varphi_P) \in \Hom(Y,U) \times \mathrm{Map}_{\mathrm{cont}}(|Y|,P)$.
\end{proof}

We see that we can glue the construction $X \times \underline{P}$ locally on $X$, so we can build a perfectoid space $X \times \underline{P}$ from any perfectoid space $X$.
Even more generally: Given a perfectoid space $X$ and a locally profinite set $Q$ there is a unique perfectoid space 
\[X \times \underline{Q}\]
such that for any perfectoid space $Y$, 
\[\Hom(Y, X \times \underline{Q}) = \Hom(Y,X) \times \mathrm{Map}_{\mathrm{cont}}(|Y|,Q).\]
If we write $Q = \bigsqcup_{i \in I} P_i$ for profinite sets $P_i$, then 
\[X \times \underline{Q} \cong \bigsqcup_i X\times \underline{P}_i.\]

\begin{lem}\label{cohpt} Let $x=\spa(K,K^+)$ with $K$ algebraically closed and $K^+\subseteq K$ an open and bounded valuation subring. Let $P= \varprojlim P_m$ be a profinite set. Then for any sheaf $\cF$ of abelian groups on $(x\times \underline{P})_{\et}$
\[H^i_{\et}(x\times \underline{P}, \cF) = 0\]
for all $i > 0 $. 
\end{lem}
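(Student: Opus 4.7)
The plan is to reduce to the base case where $P$ is a single point via the presentation $x \times \underline{P} \sim \varprojlim_m (x \times P_m)$ with each $P_m$ finite, and then to invoke the well-known vanishing of étale cohomology at a geometric point.

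First I would handle the base case $P = \mathrm{pt}$: for $x = \spa(K,K^+)$ with $K$ algebraically closed, every abelian sheaf $\cG$ on $x_\et$ satisfies $H^i_\et(x,\cG)=0$ for $i>0$. This is the adic analogue of the triviality of étale cohomology at a geometric point of a scheme; it holds because any finite étale $K$-algebra is a product of copies of $K$, so every étale cover of $x$ has a refinement by a disjoint union of copies of $x = \spa(K,K^+)$ (possibly with modified plus-ring on each component), which forces the vanishing by standard Čech arguments as in Huber's book. It follows that each $x \times P_m$, being a finite disjoint union of copies of $x$, also has trivial higher étale cohomology with arbitrary coefficients.

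For the passage to the limit, I would apply Proposition \ref{rep} to describe étale maps into $x \times \underline{P}$: any étale cover should admit a refinement of the form $\{x \times \underline{Q_\beta} \to x \times \underline{P}\}_\beta$, where $\bigsqcup_\beta Q_\beta \to P$ is a continuous surjection of profinite sets. Such a refinement is built by combining a finite clopen partition of $P$ (accounting for the rational-subset part of an étale morphism via Lemma \ref{1.1} type considerations) with a finite étale cover at some finite level $x \times P_m$, which is trivialised after enlarging $P_m$ because $K$ is algebraically closed. Čech cohomology of $\cF$ with respect to such a refinement then reduces, by the finite-level vanishing, to the limit of the groups $H^0(x \times P_m, \cF)$ with respect to the refinement system, and the higher Čech groups vanish.

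The main obstacle is establishing the cofinality of refinements of the form $\{x \times \underline{Q_\beta} \to x \times \underline{P}\}_\beta$ in the étale site of $x \times \underline{P}$ — i.e., giving a sufficiently concrete description of étale maps to $x \times \underline{P}$ using the representability statement of Proposition \ref{rep} and the structure theory of profinite sets. Once this combinatorial input is secured, the vanishing becomes a formal consequence of the triviality of the étale cohomology of the geometric point $x$.
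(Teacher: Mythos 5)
Your proposal takes a genuinely different route from the paper, and it contains a real gap that you yourself flag: the cofinality claim for refinements of the form $\{x \times \underline{Q_\beta} \to x \times \underline{P}\}_\beta$ is left entirely unproved, and this is not a small technicality but the heart of the argument. Beyond that, even granting the refinement, a \v{C}ech argument of the kind you sketch needs to know that \v{C}ech cohomology computes derived functor cohomology for the covers you use; for this one typically needs the multi-intersections of the covering pieces to be acyclic, and those multi-intersections are again spaces of the form $x\times \underline{Q}$ for profinite $Q$ --- so the acyclicity you would need is essentially the statement you are trying to prove, and the argument risks circularity unless you also supply an induction on cohomological degree (or Cartan's criterion). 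You also do not explain how arbitrary abelian sheaves $\cF$ interact with the limit: the phrase ``reduces to the limit of the groups $H^0(x\times P_m,\cF)$'' is not meaningful as written, because $\cF$ lives on $(x\times\underline{P})_\et$ and has no intrinsic restriction to the finite levels $x\times P_m$.

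The paper's proof avoids all of these difficulties by a completely formal reduction. It uses the presentation $x\times\underline{P}\approx\varprojlim x\times P_m$ together with three ingredients: (i) any sheaf on $(x\times\underline{P})_\et$ is a filtered colimit of pullbacks of sheaves from the finite levels (SGA~4, VI 8.3.13); (ii) the \'etale topos of $x\times\underline{P}$ is coherent, so cohomology commutes with filtered colimits of sheaves; and (iii) the limit theorem of \cite[Theorem 2.4.7]{SW}, which identifies $H^i_\et(x\times\underline{P},\cF^j)$ with $\varinjlim_m H^i_\et(x\times P_m,\cF^j_m)$ for a compatible system $(\cF^j_m)_m$. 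After these reductions, one only has to observe that $x\times P_m$ is a finite disjoint union of geometric points, so its higher cohomology vanishes --- precisely your base case. If you want to make your \v{C}ech approach rigorous you would, in effect, have to reprove the relevant cases of (i) and (iii), so the paper's route is the more economical one here.
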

\begin{proof} By Proposition \ref{sim} we have 
\[x\times \underline{P} \approx \varprojlim x \times P_m.\]
Any sheaf $\cF$ on $(x\times \underline{P})_{\et}$ can be written as a filtered colimit $\varinjlim_{j\in J} \mathcal{F}^j$ of sheaves $\mathcal{F}^j$ arising as the pullback of a system of sheaves $\cF^j_m$ on $(x\times P_m)_{\et}$ \cite[VI, 8.3.13]{SGA4}.
The topos $(x\times \underline{P})_{\et}$ is coherent, so for all $i\geq 0$
\[H^i_{\et}(x\times \underline{P}, \cF) \cong \varinjlim H^i_{\et}(x\times \underline{P}, \cF^j).\]
But by \cite[Theorem 2.4.7]{SW}
\[H^i_{\et}(x\times \underline{P}, \cF^j) \cong \varinjlim H^i_{\et}(x\times P_m, \cF_m^j)\]
and the latter terms are all zero for $i>0$ as $x\times P_m \cong \bigsqcup_{p\in P_m} x$ is a finite disjoint union of geometric points. 
\end{proof}

Now let $H\subset G$ be a profinite subgroup of a locally profinite group $G$ and let $X$ be a perfectoid space equipped with a continuous right action of $H$. 
Then $H$ acts on $X \times \underline{G}$ as follows. Firstly $H$ acts on the topological space $|X\times \underline{G}|$ via 
\[(x,g) \cdot h:= (xh,h^{-1}g)\]
for $(x,g) \in |X\times \underline{G}|$ and $h \in H$ and on the structure sheaf via 
\begin{eqnarray*}
h: \cO_{X\times \underline{G}}(U \times \underline{H'}) &\rightarrow& h_*\cO_{X\times \underline{G}}(U \times \underline{H'})=\cO_{X\times G}(U\cdot h^{-1} \times \underline{hH'})\\
  (h\cdot f)(g)&=& h \cdot f(h^{-1}g)
\end{eqnarray*} 
for any open $U\times \underline{H'} \subset |X\times \underline{G}|$ with $U=\spa(A,A^+)$ affinoid perfectoid and $H'$ an open coset. 

Consider the quotient 
\[X \times^H G := (X \times \underline{G})/H \in \mathcal{V}.\]
Note that $X \times^H G$ comes equipped with a right action of $G$ which is given by $ [(x,g')]\cdot g := [(x, g'g)]$ for $g \in G$ and $[(x,g')] \in (|X|\times G)/H$ on the topological spaces and in the natural way on the structure sheaf, i.e., by right translation of the function. 

\begin{prop}\label{nh} Using the notation above, there is an isomorphism in $\mathcal{V}$  
 \[(X\times^H G)/G \cong X/H. \]
\end{prop}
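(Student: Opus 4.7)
The plan is to prove the isomorphism by swapping the order in which $H$ and $G$ are quotiented out. The key observation is that the two right actions on $X \times \underline{G}$ commute: the formulas
\[
((x,g)\cdot h)\cdot g' \;=\; (xh,\, h^{-1}gg') \;=\; ((x,g)\cdot g')\cdot h
\]
display this at the level of points, and the same computation applies to sections of the structure sheaf. Since quotients in $\mathcal{V}$ are formed by passing to invariants on topological spaces, structure sheaves, and valuations (as recalled in Section \ref{subsec: quot}), commuting actions yield a canonical identification of iterated quotients:
\[
(X \times^H G)/G \;=\; ((X \times \underline{G})/H)/G \;\cong\; ((X \times \underline{G})/G)/H.
\]

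The main task is therefore to show that $(X \times \underline{G})/G \cong X$ in $\mathcal{V}$, where $G$ acts only on the $\underline{G}$-factor by right translation. The projection $p_1 : X \times \underline{G} \to X$ is $G$-invariant, so it descends to a morphism in $\mathcal{V}$. I would check that the descended map is an isomorphism affinoid-locally on $X$ and compact-open-locally on $G$: writing $U = \spa(A, A^+) \subset X$ and a compact open $G_\alpha \subset G$, Proposition \ref{sim} identifies
\[
\cO_{X \times \underline{G}}(U \times \underline{G}_\alpha) \;\cong\; \Map_{\mathrm{cont}}(G_\alpha, A).
\]
The right-translation action of $G$ restricts to a $G_\alpha$-action on each piece, and taking invariants picks out the constant functions, so $(p_{1,\ast}\cO_{X \times \underline{G}})^G = \cO_X$ after gluing over varying $G_\alpha$ exhausting $G$. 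Topologically, $|X \times \underline{G}|/G \cong |X|$ because $G$ acts freely on the second factor; the valuation at $\bar x \in |X|$ is induced from the valuation at $(x, e)$, which coincides with the valuation at $x \in X$ under the stalk isomorphism given by $p_1$.

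Once this is established, the residual $H$-action on $(X \times \underline{G})/G \cong X$ is precisely the original action of $H$ on $X$, since the twist $g \mapsto h^{-1}g$ in the $\underline{G}$-factor becomes trivial after quotienting by $G$. Combining with the swap above yields
\[
(X \times^H G)/G \;\cong\; ((X \times \underline{G})/G)/H \;\cong\; X/H.
\]
The main obstacle I anticipate is the structure-sheaf computation: verifying cleanly that the $G$-invariants of $\Map_{\mathrm{cont}}(G_\alpha, A)$ under right translation really collapse to constants, and that this identification is compatible with the restriction maps of the sheaf and with gluing across the compact opens $G_\alpha \subset G$. The remaining ingredients — commutativity of the two actions, the topological identification, and the valuation compatibility — follow directly from the definitions in Section \ref{subsec: quot} and from Proposition \ref{sim}.
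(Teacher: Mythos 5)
Your proof is correct, and it runs on the same two engines as the paper's, but with a different chassis. The paper's proof defines an explicit map $f: X\times^H G \to X/H$, $[(x,g)]\mapsto [x]$, checks it is a homeomorphism on topological spaces, and then verifies the sheaf isomorphism by the chain
\[
\cO_{(X\times^H G)/G}(f^{-1}(U))
= \big(\cO_{X\times\underline G}(\cdot)^H\big)^G
= \big(\cO_{X\times\underline G}(\cdot)^G\big)^H
= \cO_X(r^{-1}(U))^H .
\]
The middle equality is precisely your ``swap,'' carried out at the level of sections rather than of $\mathcal{V}$-objects, and the final equality is precisely the fact you isolate as a lemma, namely that $G$-invariants of $\cO_{X\times\underline G}(V\times G)$ under right translation recover $\cO_X(V)$. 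So the underlying computation is identical; what differs is packaging. Your version is conceptually cleaner: it frames the proof as ``commuting actions give commuting quotients'' plus the standalone statement $(X\times\underline G)/G\cong X$, which makes both ingredients reusable. The paper's version is more self-contained: it never invokes the abstract commuting-colimits principle (which would have to be justified against the explicit $\mathcal{V}$-quotient construction, or else derived from the categorical-quotient property the paper asserts earlier), and the explicit map $[(x,g)]\mapsto[x]$ is arguably useful to have in hand. One small word of caution on your sketch: what is needed is that $\cO_{X\times\underline G}(V\times G)^G = \cO_X(V)$, i.e.\ invariants of sections over the \emph{entire} fibre $V\times G$; the pieces $\Map_{\mathrm{cont}}(G_\alpha,A)$ for a single compact open $G_\alpha$ are not individually acted on by $G$ (the action permutes them), so the ``gluing'' you flag has to be done before taking invariants, via the identification $\cO_{X\times\underline G}(V\times G)\cong\Map_{\mathrm{cont}}(G,A)$ coming from Proposition \ref{sim} and the decomposition $G=\bigsqcup P_i$. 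Once that is set up, the invariants really are the constants and your argument closes.
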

\begin{proof}
There is a natural continuous map of topological spaces $f:X \times^H G  \rightarrow X/H$ given by $[(x,g)] \mapsto [x]$. This is well defined as for $(y,g') \in [(x,g)]$ there exists $h \in H$ such that $y=xh, g'=h^{-1}g$. This map factors through the quotient $(X \times^H G)/G \rightarrow X/H$. One immediately checks it is bijective and open, so $f$ is a homeomorphism. Consider the commutative diagram
\[\xymatrix{
 &\ar[ld]^{q_1} X \times \underline{G} \ar[d]^{q}\ar[r]^p & X \ar[d]^{r}\\
X\times^H G \ar[r]^{q_2}& (X \times^H G)/G \ar[r]^f & X/H \\
}\]
Then for $U\subset |X|/H$ open, $\cO_{X/H}(U) = \cO_X(r^{-1}(U))^H$. On the other hand 
\begin{eqnarray*}
\cO_{(X\times^H G)/G}(f^{-1}(U)) &=& \cO_{(X\times^H G)}(q_2^{-1}(f^{-1}(U)))^G   \\
&=& (\cO_{X\times \underline{G}}((f\circ q)^{-1}(U))^{H})^{G} \\
&=& (\cO_{X\times \underline{G}}((f\circ q)^{-1}(U))^{G})^{H} \\
&=& (\cO_{X\times \underline{G}}(({r}^{-1}(U)\times \underline{G}))^{G})^H = \cO_X(r^{-1}(U))^H.
\end{eqnarray*}  
Compatibility of the valuations on the stalks is clear. 
\end{proof}

\begin{prop} Assume $H \subseteq G$ is open and profinite. Using the notation above, choose a set of representatives $g_i, i \in I$, of $H\backslash G$. The natural map 
\[ \xymatrix{ \bigsqcup_{g_i, i \in I} X_{(g_i)} \ar[rd]^f \ar[r] & X \times \underline{G} \ar[d]\\
&  X \times^H G 
}\]
is an isomorphism.
\end{prop}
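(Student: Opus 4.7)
The plan is to decompose $X \times \underline{G}$ according to the right cosets of $H$ in $G$, check that the $H$-action preserves this decomposition, and identify each piece modulo $H$ with $X$.

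Since $H$ is open and profinite in the locally profinite group $G$, the right coset space $H\backslash G$ is discrete and each coset $Hg_i$ is an open profinite subset of $G$ homeomorphic to $H$ via $h \mapsto hg_i$. By the disjoint-union property of the $\underline{(-)}$-construction recorded just before the statement, this yields
\[
X \times \underline{G} \;\cong\; \bigsqcup_{i \in I} X \times \underline{Hg_i}
\]
as perfectoid spaces. The right $H$-action $(x,g)\cdot h = (xh, h^{-1}g)$ preserves each summand, because left multiplication by $h^{-1}$ stabilises every right coset $Hg_i$, so
\[
X \times^H G \;=\; \bigsqcup_{i\in I} (X \times \underline{Hg_i})/H.
\]
The task therefore reduces to showing that, for each $i$, the composite $X_{(g_i)} \to X \times \underline{G} \to X \times^H G$ induces an isomorphism $X_{(g_i)} \xrightarrow{\sim} (X \times \underline{Hg_i})/H$ in $\mathcal{V}$.

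For this I would proceed separately on topological spaces and on structure sheaves. Topologically, define $\psi_i \colon |X \times \underline{Hg_i}| \to |X|$ by $(x, hg_i) \mapsto xh$; this is continuous since the $H$-action on $X$ is continuous, it is constant on $H$-orbits, and $x \mapsto [(x, g_i)]$ is a continuous set-theoretic inverse. Indeed, the identity $(x, hg_i) = (xh, g_i)\cdot h^{-1}$ shows every $H$-orbit meets the slice at $g_i$, while $(x, g_i)\cdot h = (y, g_i)$ forces $h = 1$ and $x = y$, giving uniqueness. On structure sheaves, the formula $(h\cdot f)(g) = h\cdot f(h^{-1}g)$ from Subsection~\ref{subsec: quot} forces an $H$-invariant section $f$ on $X\times \underline{Hg_i}$ to satisfy $f(hg_i) = h\cdot f(g_i)$. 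Hence restriction at $g_i$ gives a map $(p_*\cO_{X\times \underline{Hg_i}})^H \to \cO_X$, and conversely any section $s$ of $\cO_X$ extends uniquely via $hg_i \mapsto h\cdot s$ to an $H$-invariant section. Compatibility of valuations on stalks is automatic from the construction of the quotient in $\mathcal{V}$.

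The hard part is the sheaf-theoretic step: one must verify that the assignment $s \mapsto (hg_i \mapsto h\cdot s)$ actually lands in $\cO_{X \times \underline{Hg_i}}$, i.e., defines a \emph{continuous} $R$-valued function of $h \in H$ when restricted to an affinoid perfectoid $\spa(R,R^+) \subset X$. This is precisely where the continuity of the $H$-action from Definition~\ref{1.8} enters: locally the action factors through a finite quotient of $H$, which makes the orbit assignments continuous in the $\varpi$-adic topology, and the resulting local isomorphisms glue to the claimed isomorphism of sheaves.
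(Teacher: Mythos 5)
Your approach matches the paper's: both decompose $X \times \underline{G}$ along the finite set of right cosets $H\backslash G$, reduce to identifying each $(X\times \underline{Hg_i})/H$ with $X_{(g_i)}$, and do so separately on underlying topological spaces and on structure sheaves via restriction at $g_i$ and the extension $s \mapsto (hg_i \mapsto h\cdot s)$.

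Two caveats on the sheaf step. First, your justification that ``locally the action factors through a finite quotient of $H$'' is not what Definition~\ref{1.8} says, nor is it true in general for continuous actions on perfectoid rings; what one actually needs, and what the definition supplies, is simply that the action morphism $H_A \times A \to A$ is continuous, which immediately gives continuity of $h \mapsto h\cdot s$ for fixed $s$. Second, an affinoid open $U = \spa(R,R^+) \subset X$ is only guaranteed to be invariant under some \emph{open} subgroup $H_U \subset H$, not under all of $H$, so one cannot compute $H$-invariants on $U\times \underline{Hg_i}$ directly; the paper handles this by choosing $H_U\subset H$ normal open, decomposing $p^{-1}(V)$ as a finite disjoint union $\bigsqcup_k Uh_k \times \underline{H_U h_k^{-1}g_i}$ over $H/H_U$, and then observing that taking $H$-invariants of the product collapses to taking $H_U$-invariants of the single summand $\cO_{X\times\underline{G}}(U\times \underline{H_U g_i})$. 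You should spell this reduction out; as written, your sheaf argument implicitly assumes $H$-invariant affinoid covers exist, which is not part of the hypotheses.
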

\begin{proof} 
Define $X^{H\backslash G}:=\bigsqcup_{g_i, i \in I} |X|_{(g_i)}$. There is a continuous map on topological spaces 
\[f: X^{H \backslash G} \rightarrow |X\times^H G|,
\]
sending $x \in |X|_{(g_i)}$ to $(x,g_i)$. One easily checks that it is bijective and open, therefore a homeomorphism. 

For any $i \in I$, there is a morphism in $\mathcal{V}$, 
\[\varphi_i: X \rightarrow X\times \underline{G}, \ x \mapsto (x,g_i).
\]
On a basis it is given as follows:
If $U = V \times \underline{H_V}\cdot g_i \subset |X\times \underline{G}|$ open, with $V=\spa(A,A^+)$ affinoid perfectoid and $H_V$-invariant and such that $H_V\times A \rightarrow A $ is continuous, then the morphism of sheaves $\varphi_i^\sharp$ evaluates to  
\[\cO_{X\times \underline{G}}(V\times \underline{H_V} g_i) \rightarrow \cO_{X}(V), f \mapsto f(g_i).\]
Note that the map has a section given by sending $r \in \cO_X(V)$ to the function $ h g_i \mapsto h\cdot r$. 
This gives an isomorphism 
\[\cO_X(V) \rightarrow \cO_{X\times \underline{G}}(V\times \underline{H_V} g_i)^{H_V}.
\]
For $U= V \times \underline{H'} \subset |X\times \underline{G}|$ with $g_i \notin H'$, the corresponding map between sheaves is the zero map.  

We now show that the map $f$ induces an isomorphism of structure sheaves 
\[\cO_{X\times^H G} \cong f_* \cO_{X^{H\backslash G}}.\]
Denote by 
\[ p: |X| \times G \rightarrow |X \times^H G|
\] 
the open projection map.
Let $V \subset |X\times^H G| $ be an open subset. We may assume that $V= f(U)$ and $U \subset X_{(g_i)} $ for some $i$ and that $U= \spa(B,B^+)$ is affinoid open. Then there is $H_U \subset H$ normal and open such that $H_U$ leaves $U \subset X_{(g_i)}$ invariant and such that $H_U\times B \rightarrow B$ is continuous.
We have a decomposition 
\[p^{-1}(V) = \bigsqcup_{finite} U h_k \times H_U h_k^{-1}g_i,\] where $H/H_U = \bigcup H_U h_k$. Putting everything together we get
\begin{eqnarray*}
\cO_{X\times^H G}(V) &=& \cO_{X\times \underline{G}}(p^{-1}(V))^H \\
&=& (\prod\cO_{X\times \underline{G}}(U h_k \times H_U h_k^{-1}g_i))^H \\
&=& \cO_{X\times \underline{G}}(U\times H_U g_i)^{H_U}\\
&=& \cO_{X^{H\backslash G}}(U).
\end{eqnarray*}
\end{proof}
Let $Y$ be a perfectoid space with a continuous right action of a locally profinite group $G$. We define an action morphism
\[ \alpha: Y \times \underline{G} \rightarrow Y, (y,g) \mapsto yg
\]
as follows. Let $U= \spa(A,A^+)$ be an affinoid perfectoid open subspace of $Y$, and $H_U~\subset~G$ an open subgroup leaving $U$ invariant. Choose coset representatives $g_i, i~\in~I,$ for $H_U\backslash G$.  Then the natural map
\begin{eqnarray*} \bigsqcup_{i \in I} {(U \times \underline{H}_U)_{g_i}} &\rightarrow& U \times \underline{G} \\
(y,h)_{g_i} &\mapsto& (y, h g_i) 
\end{eqnarray*}
is an isomorphism. Then 
\[\alpha|_{(U \times \underline{H}_U)_{g_i}} : (U \times \underline{H}_U)_{g_i} \rightarrow Y\]
is defined as the composite $ g_i \circ \alpha_{H_U}$, where $\alpha_{H_U}: (U \times \underline{H}_U)_{g_i} \rightarrow U$ is the map induced by 
\[A \rightarrow  \Map_{\mathrm{cont}}(H_U, A), a \mapsto  (h \mapsto h \cdot a)
\]
and $g_i: U \rightarrow Y$ is the morphism from the action of $G$ on $Y$. 

\begin{lem}\label{immersion} Let $G$ be a locally profinite group, $H\subset G$ an open subgroup. Assume $f:X\rightarrow Y$ is a morphism between perfectoid spaces, that $H$ acts continuously on $X$ and $G$ on $Y$ and that $f$ is $H$-equivariant. Then there exists a unique $G$-equivariant map 
\[f': X\times^H G \rightarrow Y\]
such that $f$ is the composite of $f'$ and the natural inclusion $X = X \times^H H \hookrightarrow X \times^H G$. 
If $f$ is an open immersion and $|f'|: |X\times^H G| \rightarrow |Y|$ is an injection, then $f'$ is also an open immersion. 
\end{lem}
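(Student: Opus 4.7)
The plan is to build $f'$ as the descent of a naturally defined map $\tilde f : X \times \underline{G} \to Y$, and then read off the open-immersion property from the decomposition of $X \times^H G$ established in the previous proposition.

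First I would define
\[\tilde f : X \times \underline{G} \xrightarrow{f \times \mathrm{id}} Y \times \underline{G} \xrightarrow{\alpha_Y} Y,\]
where $\alpha_Y$ is the action morphism for the continuous $G$-action on $Y$ constructed immediately before the lemma. On points this is $(x,g) \mapsto f(x)\cdot g$. To descend this to a morphism from the quotient $X \times^H G = (X \times \underline{G})/H$, I would check $H$-invariance: for the given $H$-action $(x,g) \cdot h = (xh, h^{-1}g)$, one computes
\[\tilde f((x,g)\cdot h) = f(xh) \cdot h^{-1}g = (f(x)\cdot h) \cdot h^{-1}g = f(x) \cdot g = \tilde f(x,g),\]
using $H$-equivariance of $f$ and associativity of the $G$-action on $Y$. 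The same identity at the sheaf level shows that $\tilde f^\sharp$ factors through $(p_* \cO_{X \times \underline{G}})^H$, so by the construction of quotients in Section~\ref{subsec: quot}, $\tilde f$ factors uniquely as $\tilde f = f' \circ q$ with $q$ the quotient map. The $G$-equivariance of $f'$ is then immediate from the formula $[(x,g')] \cdot g = [(x, g'g)]$, and uniqueness follows because $f'$ is forced to equal $f$ on $X = X \times^H H$, after which $G$-equivariance together with the decomposition $X\times^H G \cong \bigsqcup_{g_i}X_{(g_i)}$ from the previous proposition pins down its values everywhere.

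For the open-immersion statement I would use that same decomposition: on each component $X_{(g_i)}$ the restriction of $f'$ is $X \xrightarrow{f} Y \xrightarrow{\cdot g_i} Y$, which is an open immersion because $f$ is an open immersion and right multiplication by $g_i$ is an automorphism of $Y$. The hypothesis that $|f'|$ is injective ensures that the images in $|Y|$ of distinct components $X_{(g_i)}$ are pairwise disjoint. A disjoint union of open immersions with pairwise disjoint images is itself an open immersion onto the (open) union of the images, so $f'$ is an open immersion.

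The main subtlety will be justifying the descent of $\tilde f$ as a morphism in $\mathcal{V}$ (and not merely on underlying topological spaces); this amounts to verifying that the sheaf map $\tilde f^\sharp$ lands in the $H$-invariants after restricting to $H$-stable affinoid perfectoid opens, which is exactly the sheaf-level translation of the pointwise $H$-equivariance check above.
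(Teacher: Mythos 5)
Your proposal follows essentially the same route as the paper's proof: define $\tilde f$ as $\alpha_Y \circ (f \times \mathrm{id})$ on $X\times\underline{G}$, check $H$-invariance (equivalently, $H$-equivariance for the trivial $H$-action on $Y$), descend through the quotient, and then reduce the open-immersion claim to the components $X_{(g_i)}$ where $f'$ restricts to $g_i \circ f$. The details you add about uniqueness and pairwise disjointness of the component images are correct and only flesh out steps the paper leaves implicit.
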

\begin{proof} Consider the morphism $(f,id):X\times \underline{G} \rightarrow Y\times \underline{G} $ and the action morphism $\alpha: Y \times \underline{G} \rightarrow Y$, $(y,g)\mapsto yg$ defined above. The composition is $H$-equivariant for the trivial action of $H$ on $Y$, therefore factors through the quotient $X\times^H G$ which gives the morphism $f'$.
To show the last claim, it suffices to show that the restriction $f'$ to each copy $X_{(g_i)}$ of $X$ is an open immersion.
But $g_i:Y \rightarrow Y$ is an isomorphism and $f':X_{(g_i)}\rightarrow Y$ agrees with $ g_i\circ f$. 
\end{proof}

\section{The quotient $\cM_{\infty}/B(\Q_p)$}

\subsection{The Lubin--Tate tower}

Let $k=\overline{\mathbb{F}}_p$ and denote by $W(k)$ the Witt vectors of $k$. Fix a connected $p$-divisible group $H/k$ of dimension one and height two. 
\begin{defn} Let $\operatorname{Nilp}_{W(k)}$ be the category of $W(k)$-algebras $R$ in which $p$ is nilpotent. 
A \emph{deformation} of $H$ to $R \in \operatorname{Nilp}_{W(k)}$ is a pair $(G,\rho)$ where $G$ is a $p$-divisible group over $R$ and 
\[\rho: H \otimes_{k} R/p \rightarrow G\otimes_{R}R/p\] 
is a quasi-isogeny. 
Define the functor
\begin{eqnarray*} \operatorname{Def_H}: \operatorname{Nilp}_{W(k)} &\rightarrow& Sets\\
R &\mapsto& \{(G,\rho): \text{ deformation of } H \}/_{\cong}.
\end{eqnarray*}
\end{defn}

\begin{thm}[\cite{RZ}] The functor $\operatorname{Def}_H$ is representable by a formal scheme $\mfM /W(k)$. We have a decomposition 
\[\mfM \cong \bigsqcup_{i \in \Z} \mfM^{(i)}\] 
according to the height $i$ of the quasi-isogeny and non-canonically 
\[\mfM^{(i)}\cong \Spf(W(k)[[t]]).\] 
\end{thm}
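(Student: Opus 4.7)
The plan is to establish the three assertions in sequence: representability as a formal scheme, decomposition by height, and the non-canonical identification of each component with $\Spf(W(k)[[t]])$.

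First I would observe that for any deformation $(G,\rho) \in \operatorname{Def}_H(R)$ the height of the quasi-isogeny $\rho$ is a locally constant $\Z$-valued function on $\spec R$, which splits the functor as a disjoint union $\operatorname{Def}_H = \bigsqcup_{i \in \Z} \operatorname{Def}_H^{(i)}$ according to that height. Since $\End(H) \otimes \Q_p$ is the quaternion division algebra $D$ over $\Q_p$, $H$ admits self quasi-isogenies of every height; picking a self quasi-isogeny $\sigma$ of height $1$ (for instance a uniformizer in $\mathcal{O}_D$, or a suitable Frobenius twist), the rule $(G,\rho) \mapsto (G, \rho \circ \sigma^{-i})$ defines an isomorphism of functors $\operatorname{Def}_H^{(0)} \to \operatorname{Def}_H^{(i)}$. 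So it suffices to represent the height-zero component.

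Next, I would apply the rigidity of quasi-isogenies of $p$-divisible groups: any quasi-isogeny between reductions modulo $p$ lifts uniquely over the base. This allows one to replace any $(G,\rho)$ with $\rho$ of height $0$ by an honest deformation of $H$ in the strict sense (with attached quasi-isogeny the identity), giving an identification of $\operatorname{Def}_H^{(0)}$ with the classical Lubin--Tate deformation functor of $H$.

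Finally, to represent the strict deformation functor I would invoke Grothendieck--Messing (or Cartier--Dieudonn\'e) crystalline deformation theory. Since $H$ has dimension $1$ and height $2$, its covariant Dieudonn\'e module is free of rank $2$ over $W(k)$ with a rank-$1$ Hodge filtration; deformations along a square-zero thickening are classified by lifts of this line in a $2$-dimensional space, which produces a $1$-dimensional tangent space $\Hom(\omega, \operatorname{Lie})$ over $k$. Verification of Schlessinger's criteria then gives a pro-representing complete local Noetherian $W(k)$-algebra, which by dimension count and formal smoothness must be $W(k)[[t]]$. The main obstacle is this last step: setting up enough of Grothendieck--Messing theory to rigorously classify deformations, and proving the rigidity of quasi-isogenies, together form the technical heart of \cite{RZ}.
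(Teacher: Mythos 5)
The paper offers no proof of this theorem: it is stated as a citation of Rapoport--Zink \cite{RZ} and used as a black box. So there is no proof in the text to compare against; what follows is a review of your sketch on its own merits.

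Your outline is the right shape, but two places are glossed in a way that hides where the real content lies. First, the identification of $\operatorname{Def}_H^{(0)}$ with the classical Lubin--Tate deformation functor is not a direct consequence of rigidity. Rigidity lets you lift a quasi-isogeny along a nilpotent thickening, but it does not by itself tell you that a height-zero quasi-isogeny is an isomorphism; that is false over a general base. What saves you in this setting is the following chain: over $k=\overline{\mathbb{F}}_p$, any connected $p$-divisible group of dimension $1$ and height $2$ is isomorphic to $H$, and a quasi-isogeny of height $0$ between such is a unit in $\mathcal{O}_D$, hence an automorphism; then one lifts this \emph{isomorphism} (not merely the quasi-isogeny) along the nilpotent thickening $k \hookrightarrow R/p$, for which you need the additional fact that an isogeny lifting an isomorphism is again an isomorphism. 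Without explicitly invoking the height-zero $\Rightarrow$ isomorphism argument over the residue field, the step ``replace $\rho$ by the identity'' does not go through. Second, the source category is $\operatorname{Nilp}_{W(k)}$, not Artin local $W(k)$-algebras with residue field $k$, so Schlessinger's criteria and a tangent-space computation give pro-representability on the wrong category. Passing from there to representability by a formal scheme over $\operatorname{Nilp}_{W(k)}$ (and in particular to a well-behaved formal scheme that glues the components $\mfM^{(i)}$) is precisely what the general machinery of \cite{RZ} is set up to do, and should not be elided.

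Two small points: your isomorphism $\operatorname{Def}_H^{(0)} \to \operatorname{Def}_H^{(i)}$ should send $(G,\rho)$ to $(G,\rho\circ\sigma^{\,i})$, not $\sigma^{-i}$, if $\sigma$ has height $1$; and once the identification with Lubin--Tate deformations is in place, the statement $\mfM^{(0)} \cong \Spf W(k)[[t]]$ is the original theorem of Lubin--Tate (or Drinfeld), so invoking Grothendieck--Messing at that stage is more than is needed, though of course it gives a clean tangent-space computation.
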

Let $\cM_{0}:=\mfM_\eta^{ad} \times_{\breve{\Q}_p} \C_p$ be the (base change to $\C_p$ of the) adic generic fibre of $\mfM$.
For an integer $m\geq 0$ let 
\[K(m):=\mathrm{ker}(\GL_2(\Z_p)\rightarrow \GL_2(\Z/p^m\Z)).\]
One can introduce level structures to get spaces $\cM_{m}:=\cM_{K(m)}$ and finite \'etale Galois covers $\cM_{m}\rightarrow \cM_{0}$ with Galois group $\GL_2(\Z/p^m\Z)$. More generally one can construct spaces $\cM_K$ for any compact open subgroup $K \subset \GL_2(\Q_p)$ (see \cite[Section 5.34]{RZ}).
For $i \in \Z$ and a compact open subgroup $K\subseteq \GL_2(\Z_p)$ let 
\[\cM_K^{(i)}:= \cM_K \times_{\cM_0} \cM_0^{(i)}\]
so that we have a decomposition 
\[\cM_K \cong \bigsqcup_{i\in \Z} \cM^{{(i)}}_K.\] 

For two compact open subgroups $K\subset K'$ of $\GL_2(\Q_p)$ such that $K$ is normal in~$K'$ there
is a finite \'etale Galois cover  
\[\cM_{K} \rightarrow \cM_{K'} \]
with Galois group $K'/K$. 
In particular the spaces $(\cM_K)_{K\subset \GL_2(\Q_p)}$ form an inverse system. In the limit we get a perfectoid space.
\begin{thm}[{\cite[6.3.4]{SW}}] There exists a unique (up to unique isomorphism) perfectoid space $\cM_{\infty}$ over $\C_p$ such that 
\[\cM_{\infty}\sim \varprojlim_K \cM_{K}.
\]
\end{thm}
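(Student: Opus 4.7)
Uniqueness is immediate from Proposition \ref{unique}, so the content of the theorem is existence of a perfectoid space with the required property. My plan is to realize $\cM_\infty$ as a moduli space of $p$-divisible groups with full level structure and then verify the $\sim$-relation. The key non-formal input will be the Scholze--Weinstein classification of $p$-divisible groups over affinoid perfectoid rings.

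First I would pin down the moduli interpretation at infinite level. Whereas $\cM_{K(m)}$ parameterizes (generic fibres of) triples $(G,\rho,\alpha_m)$ with a Drinfeld level-$m$ structure $\alpha_m:(\Z/p^m\Z)^2\xrightarrow{\sim}G[p^m]$ and with $\GL_2(\Z/p^m\Z)$ acting via $\alpha_m$, the limit should parameterize triples $(G,\rho,\alpha)$ equipped with a full trivialization $\alpha:\Z_p^2\xrightarrow{\sim}T_pG$ of the integral Tate module. To produce a perfectoid space representing this functor on affinoid perfectoid $(R,R^+)$-algebras, I would invoke the Scholze--Weinstein classification: over such $R$, a $p$-divisible group up to quasi-isogeny is determined by its rational Tate module together with the Hodge--Tate filtration $W\subset T_pG\otimes_{\Z_p}R$. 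Once $\alpha$ is fixed this reduces the moduli problem to choosing a line $W\subset R^2$ subject to the admissibility condition imposed by the quasi-isogeny $\rho$. The result is that each component $\cM_\infty^{(i)}$ appears naturally over $\mathbb{P}^1_{\C_p}$ via the Gross--Hopkins period map, with explicit perfectoid affinoid charts coming from the universal Hodge--Tate exact sequence; gluing yields the candidate $\cM_\infty$.

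Next I would verify the two conditions in the definition of $\sim$. The homeomorphism $|\cM_\infty|\xrightarrow{\sim}\varprojlim_K|\cM_K|$ follows from the fact that all transition maps $\cM_K\to\cM_{K'}$ are finite étale and qcqs and that cofiltered inverse limits of spectral spaces along such maps behave well. The density of $\varinjlim_K\cO(\cM_K)$ inside the perfectoid structure sheaf on a sufficiently small affinoid open is built into the construction, since the functions adjoined at each finite level are precisely the coordinates of the $p^m$-torsion sections of the universal $p$-divisible group, and their compatible system of $p$-power roots is exactly what saturates the structure sheaf to make the limit perfectoid.

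The main obstacle is the representability step. Cofiltered limits of rigid analytic varieties do not exist in any ambient category of rigid spaces, and showing that the limit actually lives in the larger world of perfectoid spaces is not formal: it relies on the deep fact that trivializing the full Tate module precisely extracts enough $p$-power roots to render the structure sheaf perfectoid. This is the non-trivial input provided by the Scholze--Weinstein classification. Once representability of the limit functor is in hand, the remaining verifications are routine.
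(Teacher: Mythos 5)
The paper offers no proof of this statement: it is quoted directly from Scholze--Weinstein (\cite[Theorem 6.3.4]{SW}), and the uniqueness clause is indeed immediate from Proposition \ref{unique}, as you say. So the only question is whether your sketch of the existence part is a viable account of the Scholze--Weinstein argument, and there it has two genuine gaps.

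First, the reduction of the moduli problem is incorrect. The classification of $p$-divisible groups by pairs $(T,W)$ with $W\subset T\otimes_{\Z_p}C$ the Hodge--Tate filtration determines $G$ up to isomorphism from $(T_pG,W)$, but the quasi-isogeny $\rho$ is a \emph{datum}, not a condition: two triples $(G,\rho,\alpha)$ and $(G,\rho',\alpha)$ with $\rho'\neq\rho$ are distinct points of $\cM_\infty$, and the ambiguity is a quotient of $D^*$. Consequently the map recording $W$ — which is the Hodge--Tate period map $\pi_{\HT}$, with image the Drinfeld upper half-plane $\Omega^2=\mathbb{P}^1\setminus\mathbb{P}^1(\Q_p)$, not the Gross--Hopkins map, which surjects onto all of $\mathbb{P}^1$ and is governed by de Rham periods transported along $\rho$ — has infinite profinite fibres, so "choosing a line $W\subset R^2$ subject to admissibility" describes an open subspace of $\mathbb{P}^1$, not $\cM_\infty$. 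Your sketch conflates the two period maps and loses the fibre direction entirely. Second, the classification theorem is established in \cite{SW} pointwise over $\cO_C$ with $C$ algebraically closed; it does not by itself represent the functor on affinoid perfectoid algebras, and this is exactly where the hard work lies. The actual source of perfectoidness in \cite{SW} is different from what you describe: one embeds $\cM_\infty$ into $(\widetilde{H}^{ad}_\eta)^2$ by sending $(G,\rho,\alpha)$ to the images of $\alpha(e_1),\alpha(e_2)$ under $T_pG\hookrightarrow\widetilde{G}\cong\widetilde{H}$ (the isomorphism coming from $\rho$), where the universal cover $\widetilde{H}\cong\Spf\,\cO_{\C_p}[[T^{1/p^\infty}]]$ has perfectoid generic fibre; the image is cut out by an explicit determinant and non-vanishing condition, verified on geometric points using the classification, and the density statement needed for $\sim$ comes from $\widetilde{H}=\varprojlim_{\times p}H$ rather than from a general principle about torsion coordinates saturating the structure sheaf. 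Without the universal-cover embedding (or an equivalent device), your outline does not produce a perfectoid candidate to compare with $\varprojlim_K\cM_K$.
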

The space $\cM_\infty$ represents the functor from complete affinoid \\
$(W(k)[\frac{1}{p}], W(k))$-algebras to $Sets$ which sends $(R,R^+)$ to the set of triples $(G, \rho, \alpha)$ where $(G, \rho) \in \cM_0(R,R^+)$ and 
\[\alpha: \Z_p^2 \rightarrow T(G)_\eta^{ad}(R,R^+)\]
is a morphism of $\Z_p$-modules such that for all points $x = \Spa(K,K^+) \in \Spa(R,R^+)$, the induced map 
\[\alpha(x): \Z_p^2 \rightarrow T(G)_\eta^{ad}(K,K^+)
\]
is an isomorphism, cf.\ \cite[Section 6.3]{SW}. Here $T(G)_\eta^{ad}$ denotes the generic fibre of the Tate module of $G$.  
We have a continuous right action of $\GL_2(\Q_p)$ on $\cM_\infty$. The action of $\GL_2(\Z_p)$ is easy to describe: An element $g \in \GL_2(\Z_p)$ acts as 
\[(G, \rho, \alpha) \mapsto (G, \rho, \alpha \circ g).\] 

We have a decomposition into perfectoid spaces
\[\cM_{\infty} \cong \bigsqcup_{i \in \Z} \cM_{\infty}^{(i)} \]
at infinite level and for each $i\in \Z$ the stabilizer $\mathrm{Stab}_{\GL_2(\Q_p)}(\cM^{(i)}_\infty)$ is given by the subgroup
\[G':=\{g \in \GL_2(\Q_p) | \ \det(g) \in \Z_p^*\}.\]

As the system $(K(m))_{m\geq 0}$ is cofinal in the system of compact open subgroups $K\subset \GL_2(\Q_p)$ we also have
\begin{equation}\label{sim1} \cM_{\infty}^{(i)} \sim \varprojlim \cM_{m}^{(i)}.
\end{equation}
for any $i \in \Z$. We denote by $p_m$ the map  
\begin{equation*}
p_m: \cM^{(0)}_{\infty}\rightarrow \cM^{(0)}_{m}
\end{equation*} implicit in (\ref{sim1}). 

\subsection{The quotient $\cM^{(0)}_{\infty}/B(\Z_p)$}
Consider the perfectoid space $\cM^{(0)}_{\infty}$. Let $B \subset \GL_2$ the Borel subgroup of upper triangular matrices. The group $B(\Z_p)$ acts continuously on $\cM^{(0)}_{\infty}$. In Section \ref{subsec: 3.4} we prove the following theorem.

\begin{thm}\label{LTquotient} The object $\cM^{(0)}_{\infty}/B(\Z_p) \in \mathcal{V}$ is a perfectoid space. It comes equipped with compatible maps 
\[ \cM^{(0)}_{\infty}/B(\Z_p) \rightarrow \cM^{(0)}_{m}/B(\Z/p^m\Z)\]
and satisfies
\[\cM^{(0)}_{\infty}/B(\Z_p) \sim \varprojlim_{m} \cM^{(0)}_{m}/B(\Z/p^m\Z).
\]
There is a cover $(U_i=\spa(R_i,R^+_i))_{i \in I}$ of $\cM^{(0)}_{\infty}$ by $B(\Z_p)$-invariant affinoid perfectoid spaces such that the image $\phi(U_i)$ under the projection map
\[\phi: \cM^{(0)}_{\infty} \rightarrow \cM^{(0)}_{\infty}/B(\Z_p)\] 
is affinoid perfectoid and given by $\spa(R_i^{B(\Z_p)},(R_i^+)^{B(\Z_p)})$.
\end{thm}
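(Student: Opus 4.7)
My plan follows the three-step strategy sketched in the introduction, carrying out the first two steps. The essential inputs are Scholze's infinite-level perfectoid modular curve $\cX_{\Gamma(p^\infty)}$ together with its Hodge--Tate period map $\pi_{\HT}:\cX_{\Gamma(p^\infty)}\to\mathbb{P}^1_{\C_p}$ (\cite{torsion}), and the $p$-adic uniformization of the supersingular locus at perfectoid infinite level, which identifies a connected component of the supersingular locus inside $\cX_{\Gamma(p^\infty)}$ equivariantly with $\cM^{(0)}_\infty$. Using the $\GL_2(\Q_p)$-equivariance of $\pi_{\HT}$ and the fact that $B(\Z_p)$ fixes the point $[1:0]\in\mathbb{P}^1_{\C_p}$, a $B(\Z_p)$-stable cover of $\mathbb{P}^1_{\C_p}$ by rational subsets can be extracted, whose $\pi_{\HT}$-preimages in $\cX_{\Gamma(p^\infty)}$ are $B(\Z_p)$-stable and affinoid perfectoid (by the main structural result of \cite{torsion}). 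Restricting along the uniformization yields the desired cover of $\cM^{(0)}_\infty$ by $B(\Z_p)$-invariant affinoid perfectoid opens $U_i=\spa(R_i,R_i^+)$.

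Next I would construct $\cM_B$ by gluing affinoid perfectoid quotients $U_i/B(\Z_p)$. By Lemma \ref{1.1}, after possible refinement one writes $U_i\approx\varprojlim_m U_{i,m}$ with $U_{i,m}\subset\cM^{(0)}_m$ affinoid rigid and stable under the finite quotient $B(\Z/p^m\Z)$; the finite-group quotient results recalled in Section \ref{subsec:quotients} produce affinoid quotients $U_{i,m}/B(\Z/p^m\Z)$ compatibly in $m$. Taking the $\varpi$-adic completion of the direct limit of the integral invariant rings gives a Huber pair $(R_i^{B(\Z_p)},(R_i^+)^{B(\Z_p)})$; I claim this is perfectoid. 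Although profinite invariants of a perfectoid algebra are not automatically perfectoid, uniformity and a Frobenius lift on the tilt can be extracted from those of $R_i$ via an almost-purity argument using inputs from \cite{torsion}. Lemma \ref{1.1} ensures compatibility on overlaps, so the local quotients glue to a perfectoid space $\cM_B$ equipped with compatible projections to $\cM^{(0)}_m/B(\Z/p^m\Z)$ satisfying $\cM_B\sim\varprojlim_m\cM^{(0)}_m/B(\Z/p^m\Z)$.

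To identify $\cM_B$ with $\cM^{(0)}_\infty/B(\Z_p)\in\mathcal{V}$ I would verify that the natural projection $\phi:\cM^{(0)}_\infty\to\cM_B$ is the categorical quotient. Its underlying map of topological spaces is surjective and open, and descends to a bijection $|\cM^{(0)}_\infty|/B(\Z_p)\to|\cM_B|$ (injectivity because distinct $B(\Z_p)$-orbits are separated at some finite level by their images in $\cM^{(0)}_m/B(\Z/p^m\Z)$), hence a homeomorphism. On structure sheaves, the equality $\cO_{\cM_B}(\phi(U_i))=R_i^{B(\Z_p)}=\cO_{\cM^{(0)}_\infty}(U_i)^{B(\Z_p)}$ matches the defining formula for the categorical quotient in $\mathcal{V}$ recalled in Section \ref{subsec: quot}. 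This simultaneously establishes all three assertions of the theorem.

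The main obstacle is the first step: extracting a $B(\Z_p)$-stable affinoid perfectoid cover of $\cX_{\Gamma(p^\infty)}$ adapted to both $\pi_{\HT}$ and the supersingular uniformization. The opens given by \cite{torsion} are not a priori $B(\Z_p)$-stable, so one must choose the source cover of $\mathbb{P}^1_{\C_p}$ carefully; in addition, one must implement $p$-adic uniformization at perfectoid infinite level, refining its classical finite-level version. A closely related technical subtlety in the second step is the verification that the profinite invariants $R_i^{B(\Z_p)}$ form a perfectoid algebra, which requires almost-purity inputs to control integral structures in the limit.
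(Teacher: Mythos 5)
Your general strategy (modular curves, Hodge--Tate period map, a $B(\Z_p)$-stable affinoid perfectoid cover, then gluing local quotients) is in the same spirit as the paper, but there is a genuine gap at the crucial step, which your write-up acknowledges but does not resolve: you propose to show directly that the invariant ring $\bigl(R_i^{B(\Z_p)},(R_i^+)^{B(\Z_p)}\bigr)$ is perfectoid ``via an almost-purity argument.'' There is no general result asserting that profinite-group invariants of a perfectoid algebra are again perfectoid, and no such argument is sketched beyond the phrase; this is not a technical subtlety but the entire content of the theorem. In particular, your formula $R_i^{B(\Z_p)} = (\varinjlim_m (R_{i,m}^+)^{B_m})^{\wedge}[1/\varpi]$ is something that has to be \emph{proved}, not assumed; the two sides need not agree a priori for a continuous action of a profinite group, precisely because the action refines indefinitely as $m$ grows.

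The paper's proof gets around this by running the argument in the opposite order. It first produces candidate perfectoid quotients $U_B$ \emph{without} any a priori knowledge of $R_i^{B(\Z_p)}$, using Scholze's result from \cite{torsion} that $\cX^*_{\Gamma_0(p^\infty)}(\epsilon)_a$ is \emph{already} affinoid perfectoid: the open map $\phi:\cX^*_{\Gamma(p^\infty)}(\epsilon)_a\to\cX^*_{\Gamma_0(p^\infty)}(\epsilon)_a$ transports a $B(\Z_p)$-stable rational subset $U\subset Y=\cX^*_{\Gamma(p^\infty)}(\epsilon)_a$ to an open $U_B=\phi(U)$ of a perfectoid space (Proposition 3.9 + Corollary III.2.19 of \cite{torsion}), and for the $\gamma^i$-translates a finite \'etale cover associated to the subgroups $H_i = \gamma^iB(\Z_p)\gamma^{-i}$ is used, finite \'etale maps preserving perfectoidness. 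Only \emph{after} the perfectoid space $\cM_B$ is in hand does the paper identify its structure sheaf with the $B(\Z_p)$-invariants, by a pro-\'etale descent computation (Section \ref{subsec: 2.5}): one writes $S=\widehat{\cO}_{\cM^{(0)}_0}(\varphi(U))$ as an equalizer of two maps to $\widehat{\cO}_{\cM^{(0)}_0}(U\times_{\varphi(U)}U)$, identifies $U\times_{\varphi(U)}U\cong U\times\underline{B(\Z_p)}$ via Lemma \ref{free}, and reads off $S\cong R^{B(\Z_p)}$ from \cite[Cor.\ 6.6]{scholzepHT}. The almost-purity content you gesture at is exactly what makes $\cX^*_{\Gamma_0(p^\infty)}(\epsilon)_a$ perfectoid in \cite{torsion}; the paper imports it rather than reproving it. Your cover step also elides a real issue handled in the paper: the opens must be rational in the anticanonical affinoid $Y\gamma^i$ (so that $\Gamma_0$-level structure can be extracted), and the $\pi_{\HT}$-preimages of translated discs are stable under $\gamma^{-i}B(\Z_p)\gamma^i$ rather than under $B(\Z_p)$ itself, which forces the $\mathcal{U}_i$/$H_i$ bookkeeping in the paper.
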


The strategy to prove the above theorem is the following: 

\begin{itemize}
	\item Using infinite level modular curves, we will first construct a perfectoid space $\cM_{B}$ with the property that
\[\cM_{B} \sim \varprojlim \cM^{(0)}_{m}/B(\Z/p^m\Z).
\]
By Proposition \ref{unique} the space $\cM_{B}$ is then unique up to unique isomorphism.
\item In Section \ref{subsec: 2.5} we then identify $\cM_{B}$ with $\cM^{(0)}_{\infty}/B(\Z_p)$.
\end{itemize}

\subsection{Background on infinite level modular curves}
The infinite-level modular curve $ \cX_{\Gamma(p^{\infty})}^* $ was constructed as a perfectoid space over $\Q^{\mathrm{cycl}}_p$ in \cite{torsion}. In this paper we will work with the base change to $\C_p$. Furthermore $\Gamma_0(p^m)$ denotes the subgroup of $\GL_2(\Z_p)$ consisting of matrices that are upper triangular $\mod p^m$, without the determinant condition in \cite{torsion}. This affects only connected components as we work over $\C_p$. In what follows, all other notation is as in \cite{torsion}, but the spaces are always implicitly base changed to $\C_p$.

The perfectoid space $ \cX_{\Gamma(p^{\infty})}^* $ comes equipped with a compatible family of maps $q_m: \cX^*_{\Gamma(p^{\infty})}\rightarrow \cX^*_{\Gamma(p^{m})}$ such that 
\[
\cX_{\Gamma(p^{\infty})}^* \sim \varprojlim \cX^*_{\Gamma(p^{m})}.
\]
The group $\GL_2(\Q_p)$ acts continuously on $\cX_{\Gamma(p^{\infty})}^*$ and there is a surjective $\GL_2(\Q_p)$-equivariant map, the Hodge-Tate period map
\[\pi_{\HT}:\cX^*_{\Gamma(p^{\infty})} \rightarrow \mathbb{P}_{\C_p}^1.\]
The space $\mathbb{P}^1_{\C_p}$ comes equipped with a right action of $\GL_2(\Q_p)$, which on $\C_p$-points is given by
\[[x_0:x_1]\left(\begin{array}{cc} a& b \\ c & d \end{array}\right)= [dx_0-bx_1: -cx_0 + ax_1],
 \]
i.e., it is the usual left action of $\left(\begin{array}{cc} a& b \\ c & d \end{array}\right)^{-1}$. Furthermore part of the tower $(\cX^*_{\Gamma_0(p^{m})})_m$ is perfectoid. More precisely let $0 \leq\epsilon < 1/2$. Then Scholze constructs open neighbourhoods 
\[\cX^*_{\Gamma_0(p^{m})}(\epsilon)_a \subset \cX^*_{\Gamma_0(p^{m})}
\] 
of the anticanonical locus. For $m$ large enough these are affinoid spaces. 
Roughly speaking $\cX^*_{\Gamma_0(p^{m})}(\epsilon)_a$ is the locus, where the subgroup is anticanonical and the Hasse invariant satisfies $|Ha|\geq |p|^{p^{-m}\epsilon}$.
By Corollary III.2.19 of \cite{torsion}, there exists a unique affinoid perfectoid space  
$\cX^*_{\Gamma_0(p^{\infty})}(\epsilon)_a$ such that
\begin{equation}\label{simlim2}
\cX^*_{\Gamma_0(p^{\infty})}(\epsilon)_a \approx \varprojlim \cX^*_{\Gamma_0(p^{m})}(\epsilon)_a.
\end{equation}

This is in fact the first step in showing that $\cX^*_{\Gamma(p^{\infty})}$ is perfectoid. From $\Gamma_0(p^\infty)$ one can pass to deeper level structures as follows. 
Firstly, in \cite[Proposition III.2.33]{torsion}, the fibre product
\[ \cX^*_{\Gamma_1(p^{m})\cap \Gamma_0(p^{\infty})}(\epsilon)_a := \cX^*_{\Gamma_1(p^{m})} \times_{\cX^*_{\Gamma_0(p^{m})}} \cX^*_{\Gamma_0(p^{\infty})}(\epsilon)_a
\]
is shown to be affinoid perfectoid. 
Passing from $\Gamma_1(p^m)$ to full level is easy: The map
\[\cX^*_{\Gamma(p^{m})}(\epsilon)_a \rightarrow \cX^*_{\Gamma_1(p^{m})}(\epsilon)_a
\]
is finite \'etale. Therefore the space
\begin{eqnarray*} \cX^*_{\Gamma(p^{m})\cap \Gamma_0(p^{\infty})}(\epsilon)_a &:=& \cX^*_{\Gamma(p^{m})}(\epsilon)_a \times_{\cX^*_{\Gamma_1(p^{m})}(\epsilon)_a} \cX^*_{\Gamma_1(p^{m})\cap \Gamma_0(p^{\infty})}(\epsilon)_a \\
&=& \cX^*_{\Gamma(p^{m})}(\epsilon)_a \times_{\cX^*_{\Gamma_0(p^{m})}(\epsilon)_a} \cX^*_{\Gamma_0(p^{\infty})}(\epsilon)_a
\end{eqnarray*}
is affinoid perfectoid. 

In the limit (cf.\ \cite[Theorem III.2.36]{torsion}) we get an affinoid perfectoid space $\cX^*_{\Gamma(p^{\infty})}(\epsilon)_a$ which satisfies
\begin{equation}\label{simlim1}
\cX^*_{\Gamma(p^{\infty})}(\epsilon)_a \approx \varprojlim \cX^*_{\Gamma(p^{m})}(\epsilon)_a
\end{equation}
and in particular
\[\cX^*_{\Gamma(p^{\infty})}(\epsilon)_a \approx \varprojlim \cX^*_{\Gamma(p^{m})\cap \Gamma_0(p^\infty)}(\epsilon)_a.
\]

\begin{prop}\label{open} The natural map
\[\phi: \cX^*_{\Gamma(p^{\infty})}(\epsilon)_a \rightarrow \cX^*_{\Gamma_0(p^{\infty})}(\epsilon)_a\]
is open.
\end{prop}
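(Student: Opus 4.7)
The plan is to factor $\phi$ through finite-level intermediate spaces and then invoke the openness of finite étale morphisms. For each $m \geq 0$, I would write
\[
\phi_m \colon \cX^*_{\Gamma(p^{m}) \cap \Gamma_0(p^{\infty})}(\epsilon)_a \longrightarrow \cX^*_{\Gamma_0(p^{\infty})}(\epsilon)_a
\]
for the second projection in the defining fibre product, and
\[
\pi_m \colon \cX^*_{\Gamma(p^{\infty})}(\epsilon)_a \longrightarrow \cX^*_{\Gamma(p^{m}) \cap \Gamma_0(p^{\infty})}(\epsilon)_a
\]
for the structural projection coming from the $\approx$-identification recorded just above the statement. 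Then $\phi = \phi_m \circ \pi_m$. The first observation is that $\phi_m$ is the base change of the finite étale cover $\cX^*_{\Gamma(p^{m})}(\epsilon)_a \to \cX^*_{\Gamma_0(p^{m})}(\epsilon)_a$ along $\cX^*_{\Gamma_0(p^{\infty})}(\epsilon)_a \to \cX^*_{\Gamma_0(p^{m})}(\epsilon)_a$, and is therefore itself finite étale; in particular it is open as a morphism of analytic adic spaces.

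Next, the $\approx$-relation yields a homeomorphism
\[
|\cX^*_{\Gamma(p^{\infty})}(\epsilon)_a| \;\cong\; \varprojlim_m |\cX^*_{\Gamma(p^{m}) \cap \Gamma_0(p^{\infty})}(\epsilon)_a|,
\]
so the topology on the source has a basis of subsets of the form $\pi_m^{-1}(U_m)$ with $U_m$ open at some finite level. It therefore suffices to show that each $\phi(\pi_m^{-1}(U_m))$ is open. The transition maps in the defining inverse system are base changes of the finite étale surjections $\cX^*_{\Gamma(p^{m+1})}(\epsilon)_a \to \cX^*_{\Gamma(p^{m})}(\epsilon)_a$, hence themselves surjective, and since the topological spaces involved are spectral, each projection $\pi_m$ is surjective on topological spaces (inverse limits of non-empty spectral spaces along surjective maps have surjective projections). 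Consequently
\[
\phi(\pi_m^{-1}(U_m)) \;=\; \phi_m(\pi_m(\pi_m^{-1}(U_m))) \;=\; \phi_m(U_m),
\]
which is open by the first paragraph. Since arbitrary unions of opens are open, $\phi$ is open.

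The substantive ingredient is the openness of $\phi_m$ at finite level, which reduces to its finite étaleness together with the standard fact that étale morphisms of adic spaces are open. The only remaining point is the topological surjectivity of $\pi_m$, which is formal once one has the spectralness of the underlying spaces and the surjectivity of the finite-level transition maps; I expect this bit of inverse-limit bookkeeping to be the only place where a brief extra verification is needed, and it could equivalently be packaged by directly showing that $\pi_m$ itself is open (opens of the limit pushed forward along open transition maps are open).
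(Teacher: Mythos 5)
Your proof is correct, and the route is essentially the one the paper takes: reduce via the $\approx$-identifications to openness at a finite level, which holds because the relevant covering map is finite \'etale. The paper does this a bit more tersely, reducing directly to the statement that $\cX^*_{\Gamma(p^{m})}(\epsilon)_a \to \cX^*_{\Gamma_0(p^{m})}(\epsilon)_a$ is open (flat, hence open); you instead factor $\phi = \phi_m\circ\pi_m$ through $\cX^*_{\Gamma(p^m)\cap\Gamma_0(p^\infty)}(\epsilon)_a$ and use finite \'etaleness of the pulled-back $\phi_m$, which makes explicit the topological surjectivity of $\pi_m$ that is needed in either version of the reduction --- a point the published proof leaves implicit. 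One small imprecision: the transition maps in your intermediate inverse system are not literal base changes of $\cX^*_{\Gamma(p^{m+1})}(\epsilon)_a\to\cX^*_{\Gamma(p^m)}(\epsilon)_a$, because the square relating $\Gamma(p^{m+1}),\Gamma(p^m),\Gamma_0(p^{m+1}),\Gamma_0(p^m)$ is not cartesian (one has $\Gamma(p^{m+1})\subsetneq\Gamma(p^m)\cap\Gamma_0(p^{m+1})$). They are instead base changes along $\cX^*_{\Gamma_0(p^\infty)}(\epsilon)_a\to\cX^*_{\Gamma_0(p^{m+1})}(\epsilon)_a$ of the finite \'etale cover from level $\Gamma(p^{m+1})$ to the intermediate level $\Gamma(p^m)\cap\Gamma_0(p^{m+1})$; the conclusion that they are finite \'etale and surjective, and hence that each $\pi_m$ is topologically surjective, is unaffected.
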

\begin{proof} Using the above relations (\ref{simlim2}) and (\ref{simlim1}) it suffices to show that the map
\[\cX^*_{\Gamma(p^{m})}\rightarrow \cX^*_{\Gamma_0(p^{m})}\]
is open. But this map is flat, therefore open. 
\end{proof}

We have a set-theoretic decomposition 
\begin{eqnarray*} \cX^*_{\Gamma(p^{\infty})} &=& \cX^{* \mathrm{ord}}_{\Gamma(p^{\infty})} \sqcup \cX^{\mathrm{ss}}_{\Gamma(p^{\infty})} \\
&=& \pi_{\HT}^{-1}(\mathbb{P}^1(\Q_p)) \sqcup \pi_{\HT}^{-1}(\Omega^2).
\end{eqnarray*}
Here $\Omega^2$ denotes the Drinfeld upper half plane $\Omega^2=\mathbb{P}^1\backslash \mathbb{P}^1(\Q_p)$.

The supersingular locus $\cX^{\mathrm{ss}}_{\Gamma(p^{\infty})}$ is contained in translates of the affinoid perfectoid space $\cX^*_{\Gamma(p^{\infty})}(\epsilon)_a$. 
Concretely, let $\gamma:= \left(\begin{array}{cc} p & \\ & p^{-1} \end{array}\right) \in \GL_2(\Q_p)$.
Then
\[ \cX^{\mathrm{ss}}_{\Gamma(p^{\infty})} \hookrightarrow \cX^*_{\Gamma(p^\infty)}\backslash \pi^{-1}_{\HT}([1:0]) = \bigcup_{i=0}^{\infty} \cX^*_{\Gamma(p^{\infty})}(\epsilon)_a \gamma^{i}. \]

The Lubin--Tate tower sits inside the infinite level modular curve. More precisely we have a decomposition of the supersingular locus  
\[\cX^{\mathrm{ss}}_{\Gamma(p^{\infty})} \cong \bigsqcup_{j\in J} \cM^{(0)}_{\infty}\]
into a finite disjoint union of copies of $\cM^{(0)}_\infty$.
The embeddings are equivariant for the $\GL_2(\Z_p)$-action. 

We fix once and for all an embedding $\iota:\cM^{(0)}_{\infty}\rightarrow \cX^{\mathrm{ss}}_{\Gamma(p^{\infty})}$
as well as compatible embeddings $\iota_m: \cM^{(0)}_{m}\rightarrow \cX^{\mathrm{ss}}_{\Gamma(p^{m})}$ such that for any $m\geq 0$ the following diagram commutes\footnote{Equivalently: Fix a supersingular elliptic curve $E/\overline{\mathbb{F}}_p$ and an isomorphism $E[p^\infty] \cong H$.}
\begin{equation}\label{lg}
\xymatrix{
 \cM^{(0)}_{\infty} \ar[d]^{p_m}\ar[r]^{\iota} &  \ar[d]^{q_m} \cX^*_{\Gamma(p^{\infty})}\\
\cM^{(0)}_{m} \ar[r]^{\iota_m} &  \cX^*_{\Gamma(p^{m})}\\
}.
\end{equation}

\subsection{Construction of $\cM_B$} \label{subsec: 3.4}

Define the underlying topological space of $\cM_B$ to be the quotient space $|\cM^{(0)}_{\infty}/B(\Z_p)|:=|\cM^{(0)}_{\infty}|/B(\Z_p)$.
Then 
\[|\cM^{(0)}_{\infty}/B(\Z_p)| =  \varprojlim_{m} |\cM^{(0)}_{m}/B(\Z/p^m\Z)|.
\]

In analogy with Definition III.3.5 of \cite{torsion} we make the following definition
\begin{defn} 
\begin{enumerate}
	\item A subset $V \subset |\cM^{(0)}_{\infty}/B(\Z_p)|$ is affinoid perfectoid if it is the preimage of some affinoid open 
	\[V_m = \Spa(R_m, R^+_m) \subset |\cM^{(0)}_{m}/B(\Z/p^m\Z)|
	\] 
	for all sufficiently large $m$, and if $(R_\infty, R^+_\infty)$ is an affinoid perfectoid $\C_p$-algebra, where $R^+_{\infty}$ is the $p$-adic completion of $\varinjlim_m R_m^+$, and $R_\infty:= R_\infty^+[p^{-1}]$.
\item A subset $V\subset |\cM^{(0)}_{\infty}/B(\Z_p)|$ is called perfectoid if it can be covered by affinoid perfectoid subsets.
\end{enumerate}
\end{defn}

Once we show that $|\cM^{(0)}_{\infty}/B(\Z_p)|$ is perfectoid in the sense of the previous definition, we may use the affinoid perfectoid algebras on the affinoid perfectoid pieces to define a structure sheaf on $|\cM^{(0)}_{\infty}/B(\Z_p)|$ turning it into the perfectoid space $\cM_B$ that we seek to construct.

In a first step we show that every point $x \in |\cM^{(0)}_{\infty}|$ has a nice $B(\Z_p)$-stable neighbourhood. For that we recall some explicit $B(\Z_p)$-invariant affinoid open subspaces of $\Omega^2$.

Let $\mathbb{D}^1 \subset \mathbb{P}^1$ be the closed unit disc embedded in $\mathbb{P}^1$ via $x \mapsto [x:1]$ in usual homogeneous coordinates.
For an integer $n \geq 1$ let $z_1,\dots,z_{p^{n}} \in \Z_p$ be a set of representatives of $\Z_p/p^n\Z_p$ and consider the rational subset $X_n \subset \mathbb{D}^1$ defined by 
\begin{eqnarray*} X_n(\C_p)  &=& \{ x \in \mathbb{D}^1(\C_p): |x-z_i| \geq p^{-n} \text{ for all } i = 1, \dots, p^n\} \\
 &=& \{ x \in \mathbb{D}^1(\C_p) : |x-z|\geq p^{-n} \text{ for all } z \in \Z_p \}.
\end{eqnarray*}
It is well known that $(X_n)_{n \in \mathbb{N}}$ is a cover of $\Omega^2\cap \mathbb{D}^1$. 
For any $n\geq 0$, the affinoid open $X_n \subset \mathbb{P}^1$ is stable under $B(\Z_p)$. Indeed let $g = \left(^a \ ^b_d\right) \in B(\Z_p)$ and $x \in X_n(\C_p)$, then
\[ |x-z|\geq p^{-n}, \ \forall \ z \in \Z_p \]
and therefore 
\begin{eqnarray*} |xg-z| &=& |(dx-b)/{a} -z|  \\
&=& |dx-b - az| = |x-(az+b)/d|\\
& \geq& p^{-n}. 
\end{eqnarray*}

For $i \geq 0$ the translate $\mathbb{D}^1\gamma^{i}$ is the disk of radius $p^{2i}$
\[ \mathbb{D}^1\gamma^{i}(\C_p) = \{[x:1] \in \mathbb{P}^1(\C_p) : |x|\leq p^{2i}\}.\]
The translates $X_n\gamma^{i}$ are invariant under the group
\[\gamma^{-i}B(\Z_p) \gamma^{i}= \left\{\left(\begin{array}{cc}a & b \\ & d \end{array}\right) \in B(\Q_p) : a, d \in \Z_p^*, b \in p^{-2i}\Z_p \right\}.
\]
Note that $B(\Z_p) \subset \gamma^{-i}B(\Z_p) \gamma^{i}$. Explicitly we have 
\[X_n\gamma^{i}(\C_p) =\{x \in \mathbb{D}^1\gamma^{i}(\C_p) : |x-z| \geq p^{2i-n} \ \forall z \in \Q_p \cap \mathbb{D}^1\gamma^{i}(\C_p) \}.\] 
By \cite{torsion} Theorem III.3.17(i), the preimage 
\[\pi_{\HT}^{-1}(\mathbb{D}^1) \subset \cX^{*}_{\Gamma(p^{\infty})}\]
under the Hodge-Tate period morphism is affinoid perfectoid\footnote{It coincides with the subset  
$\{[x_1:x_2] \in \mathbb{P}^1(\C_p) : |x_1| \leq |x_2|\}$ denoted by $\Fl_{\{2\}}$ in \cite{torsion}.}.

Furthermore there exists $0<\epsilon <\frac{1}{2}$ such that 
$\cX^*_{\Gamma(p^{\infty})}(\epsilon)_a \subset \pi_{\HT}^{-1}(\mathbb{D}^1)$. Note this is an inclusion of affinoid perfectoid spaces.
We fix such an $\epsilon$ and abbreviate 
\[Y:= \cX^*_{\Gamma(p^{\infty})}(\epsilon)_a.\]

\begin{prop} Let $x \in |\cM^{(0)}_{\infty}\cap Y\gamma^{i}|$ be a point. Then there exists an open neighbourhood $U$ of $x$ in $\cM_{\infty}^{(0)}$, such that
\begin{itemize}
	\item $U$ is affinoid perfectoid,
	\item $U \subset Y\gamma^{i}$ is a rational subset,
	\item $U$ is invariant under $\gamma^{-i}B(\Z_p)\gamma^i$,
	\item $U \approx \varprojlim U_m$ for affinoid open subsets $U_m \subset \cM^{(0)}_{m}$ and $m$ large enough.
\end{itemize}
\end{prop}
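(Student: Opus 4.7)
The plan is to reduce to $i=0$ by the action of $\gamma$ and then carve $U$ out of $Y$ by a Hodge--Tate condition followed by a clopen decomposition. Since $\gamma \in G'$ acts as an automorphism of the perfectoid space $\cM^{(0)}_\infty$ and of $\cX^*_{\Gamma(p^\infty)}$, and conjugation by $\gamma^i$ carries $B(\Z_p)$ to $\gamma^{-i}B(\Z_p)\gamma^i$, it suffices to find, for the translated point $x':= x\gamma^{-i} \in |\cM^{(0)}_\infty \cap Y|$, an open neighbourhood $U'$ satisfying the listed properties with $i=0$ and $B(\Z_p)$-invariance; the target $U := U'\gamma^i$ then inherits them. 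The $\approx\varprojlim$ structure is preserved under this translation because the systems $\{K(m)\}_m$ and $\{\gamma^{-i}K(m)\gamma^i\}_m$ are mutually cofinal in the compact open subgroups of $\GL_2(\Z_p)$.

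Given $x'$, the image $\pi_{\HT}(x')$ lies in $\Omega^2 \cap \mathbb{D}^1 = \bigcup_n X_n$ (supersingularity together with $Y \subset \pi_{\HT}^{-1}(\mathbb{D}^1)$), so pick $n$ with $\pi_{\HT}(x') \in X_n$ and set
\[V := Y \cap \pi_{\HT}^{-1}(X_n).\]
Since $X_n$ is rational in $\mathbb{D}^1$ (cut out by $|T-z_i|\geq |p|^n$) and $\pi_{\HT}|_Y \colon Y \to \mathbb{D}^1$ is a morphism of affinoid analytic adic spaces, $V$ is a rational subset of $Y$, hence affinoid perfectoid. It is $B(\Z_p)$-invariant because $X_n$ is (by the computation already in the excerpt), $\pi_{\HT}$ is $\GL_2(\Q_p)$-equivariant, and $Y$ is the pullback along $\cX^*_{\Gamma(p^\infty)}(\epsilon)_a \to \cX^*_{\Gamma_0(p^\infty)}(\epsilon)_a$ of an affinoid perfectoid on which $B(\Z_p) = \Gamma_0(p^\infty)$ acts trivially. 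As $X_n \subset \Omega^2$, we have $V \subset \cX^{\mathrm{ss}}_{\Gamma(p^\infty)} = \bigsqcup_{j\in J}\cM^{(0)}_\infty$ with $J$ finite, yielding a clopen decomposition $V = \bigsqcup_j V^{(j)}$ stable under $B(\Z_p)\subset G'$ (each Lubin--Tate copy being $G'$-stable). In the affinoid perfectoid ring of $V$ this decomposition corresponds to orthogonal idempotents $\{e_j\}$, so each $V^{(j)} = V(1/e_j)$ is itself a rational subset of $V$, hence of $Y$. Let $U'$ be the unique piece $V^{(j_0)}$ containing $x'$.

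For the $\approx\varprojlim$ presentation at finite level, combine equation (\ref{simlim1}) with Lemma \ref{1.1} applied twice: for $m$ sufficiently large, $V^{(j_0)}$ is the preimage under the projection $\cX^*_{\Gamma(p^\infty)}(\epsilon)_a \to \cX^*_{\Gamma(p^m)}(\epsilon)_a$ of a rational subset $V_m^{(j_0)}$, with $V^{(j_0)} \approx \varprojlim_m V_m^{(j_0)}$. Using the commutative diagram (\ref{lg}) together with the fact that the Lubin--Tate decomposition $\cX^{\mathrm{ss}}_{\Gamma(p^m)} = \bigsqcup_j \cM^{(0)}_m$ is already realised at level $m$ for $m \gg 0$, one concludes $V_m^{(j_0)} \subset \iota_m(\cM^{(0)}_m)$, providing an affinoid open $U_m \subset \cM^{(0)}_m$ with $U' \approx \varprojlim U_m$. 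I expect the main technical obstacle to be precisely this final descent step, i.e.\ verifying that the idempotent separating off $V^{(j_0)}$ already lives in the ring of $V_m$ for $m$ large so that $V_m^{(j_0)}$ lands inside a single Lubin--Tate component rather than spreading over several copies of $\cM^{(0)}_\infty$ in the limit; everything else reduces to standard manipulations with rational subsets, Hodge--Tate equivariance, and the established $B(\Z_p)$-invariance of $Y$ and of the $X_n$.
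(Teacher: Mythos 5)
Your proposal takes essentially the same route as the paper's proof: carve out $\pi_{\HT}^{-1}(X_n)$ inside $Y$, split off the clopen Lubin--Tate component containing the point, and obtain the finite-level presentation from Lemma \ref{1.1} combined with diagram (\ref{lg}). The only stylistic difference is that the paper argues the $i>0$ case directly, invoking the $\GL_2(\Q_p)$-stability of the relation `$\approx$' cited from \cite{torsion}, p.~59, rather than reducing to $i=0$ by translation and cofinality as you do --- the two are equivalent. The obstacle you flag at the end is not actually one: since the transition maps $q_m$ are surjective and $U$ already lies inside a single copy $\iota(\cM^{(0)}_\infty)$, the compatibility $q_m\circ\iota = \iota_m\circ p_m$ of diagram (\ref{lg}) places $U_m$ (the image of $U$ under $q_m$, which exists once Lemma \ref{1.1} gives $U=q_m^{-1}(U_m)$) directly inside $\iota_m(\cM^{(0)}_m)$; there is no need to track the idempotent down the tower, because the disjoint-union decomposition of the supersingular locus into Lubin--Tate components already exists compatibly at every finite level.
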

\begin{proof} 
We first show the assertion for all $x \in \left|\cM^{(0)}_{\infty} \cap Y \right|$. 
So let $x \in \left|\cM^{(0)}_{\infty} \cap Y\right|$ be arbitrary. Then there exists $n \in \mathbb{N}$ such that $|\pi_{\HT}|(x) \in |X_n|$. Fix such a $n$. The affinoid open $X_n \subset \mathbb{D}^1$ is a rational subset and therefore $\pi_{\HT}^{-1}(X_n)\subset \pi_{\HT}^{-1}(\mathbb{D}^1)$ is also a rational subset.
Note that $\pi_{\HT}^{-1}(X_n) \subset \cX^{\mathrm{ss}}_{\Gamma(p^{\infty})}$, so $\pi_{\HT}^{-1}(X_n)$ is a disjoint union of the affinoids $V_j$ defined as the intersection of $\pi_{\HT}^{-1}(X_n)$ and a copy of $\cM^{(0)}_{\infty}$. 
Let $j$ be such that $x \in V_j$. Still $V_j$ is rational in $\pi_{\HT}^{-1}(\mathbb{D}^1)$ and so 
\[U:= V_j \cap Y
\] 
is a rational subset of the affinoid perfectoid space $Y$.
In particular $U$ is affinoid perfectoid.

As we have seen above, the affinoid opens $X_n\subset \mathbb{P}^1$ are invariant under $B(\Z_p)$ and the Hodge-Tate period map $\pi_{\HT}$ is $\GL_2(\Q_p)$-equivariant, therefore $\pi_{\HT}^{-1}(X_n)$ is stable under $B(\Z_p)$. Each copy of $\cM^{(0)}_{\infty} \subset \cX^{\mathrm{ss}}_{\Gamma(p^{\infty})}$ is stable under $\GL_2(\Z_p)$ and $Y$ is also $B(\Z_p)$-invariant, which gives that $U$ is invariant under $B(\Z_p)$.
 
As $U \subset Y$ is rational, it follows from Lemma \ref{1.1} and Equation (\ref{simlim1}) that for $m$ large enough there exists $U_m \subset \cX^*_{\Gamma(p^{m})}(\epsilon)_a$ affinoid such that $U \approx \varprojlim U_m$.
As $U \subset \cM^{(0)}_{\infty}$, the commutativity of diagram (\ref{lg}) implies that $U_m \subset \cM^{(0)}_{m}$.

Now let $x \in |\cM^{(0)}_{\infty}|$ be arbitrary. Then there exists an integer $i\geq 0$ such that $x \in |Y\gamma^{i}|$ and an integer $n\geq 0$ such that $\pi_{\HT}(x) \in X_n\gamma^{i}$. The property of being affinoid perfectoid is stable under the action of $\GL_2(\Q_p)$, therefore
\[\pi^{-1}_{\HT}(\mathbb{D}^1\gamma^{i}),\pi^{-1}_{\HT}(X_n\gamma^{i}) \text{ and } Y\gamma^{i}  \]
are all affinoid perfectoid.
Define 
\[U:= \pi^{-1}_{\HT}(X_n\gamma^{i}) \cap Y\gamma^{i} \cap \cM^{(0)}_{\infty}.\] 
The same argument as above shows that this is affinoid perfectoid. It is stable under $\gamma^{-i}B(\Z_p)\gamma^i$ as $\gamma^{i}$ leaves each copy $\cM^{(0)}_{\infty} \subset \cX^{\mathrm{ss}}_{\Gamma(p^{\infty})}$ invariant. 

For the final assertion note that the property `$\approx$' for affinoids in $\cX^*_{\Gamma(p^{\infty})}$ is stable under $\GL_2(\Q_p)$ (see \cite{torsion} p.59)
therefore $U \approx \varprojlim U_m $ for affinoid opens $U_m \subset \cX^*_{\Gamma(p^{m})}$ and $m$ large enough, and we argue as above to deduce that $U_m \subset \cM^{(0)}_{m}$.
\end{proof}

\begin{defn} For $i\geq 0$ define $\mathcal{U}_i$ to be the collection of affinoid perfectoid open subsets $U\subset \cM^{(0)}_{\infty}$ such that 
\begin{enumerate}
	\item $U \subset Y\gamma^i$ is a rational subset and $\gamma^{-i}B(\Z_p)\gamma^i$-stable, 
	\item $U \approx \varprojlim_m U_m $ for affinoid open subsets $U_m \subset \cM^{(0)}_{m}$, $m$ large enough.
\end{enumerate}
\end{defn}
By the above proposition every point in $ |\cM^{(0)}_{\infty}|$ has a neighbourhood $U \in \mathcal{U}_i$ for some $i$.
We now construct the candidate quotients $U_B$ for any $U \in \mathcal{U}_i$.

\begin{prop} Let $U \in \mathcal{U}_0$. 
There exists a perfectoid space $U_B$ whose underlying topological space is homeomorphic to $|U|/B(\Z_p)$ and such that 
	\[
U_B \sim \varprojlim U_m/B(\Z/p^m\Z).
\]
\end{prop}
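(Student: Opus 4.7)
The strategy is to realize $U_B$ as an open subset of the affinoid perfectoid space $Z := \cX^*_{\Gamma_0(p^{\infty})}(\epsilon)_a$, exploiting the identification $\cX^*_{\Gamma(p^m)}/B(\Z/p^m\Z) \cong \cX^*_{\Gamma_0(p^m)}$ at each finite level, which comes from $\Gamma_0(p^m)/\Gamma(p^m) \cong B(\Z/p^m\Z)$. First I would arrange that the $U_m$ may be chosen $B(\Z/p^m\Z)$-stable for $m \geq m_0$: since $U$ is $B(\Z_p)$-stable and $U = p_m^{-1}(U_m)$, every translate $b\cdot U_m$ with $b \in B(\Z/p^m\Z)$ has $p_m$-preimage equal to $U$, so replacing $U_m$ by $\bigcap_{b} b\cdot U_m$ (still affinoid, as a finite intersection of affinoid opens in the separated space $\cM_m^{(0)}$) keeps $p_m^{-1}(U_m) = U$ while making $U_m$ stable. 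By the finite-group quotient lemma recalled above, $U_m^B := U_m/B(\Z/p^m\Z) = \spa(R_m^{B(\Z/p^m\Z)}, (R_m^+)^{B(\Z/p^m\Z)})$ is then an affinoid analytic adic space, sitting inside $\cX^*_{\Gamma_0(p^m)}$ as an open affinoid.

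Next I would define $U_B := \phi(U) \subset Z$, where $\phi : Y \to Z$ is the open map of Proposition~\ref{open}. Since $U$ is open in $Y$, $U_B$ is open in $Z$, and so inherits the structure of a perfectoid space. To identify $|U_B|$ with $|U|/B(\Z_p)$, note that $\phi|_U$ is continuous, surjective, and open, and its fibres are the $B(\Z_p)$-orbits on $|U|$: the latter uses that at each finite level $\cX^*_{\Gamma(p^m)} \to \cX^*_{\Gamma_0(p^m)}$ is a finite \'etale Galois cover with group $B(\Z/p^m\Z)$, combined with the profiniteness of $B(\Z_p) = \varprojlim B(\Z/p^m\Z)$ to lift a compatible system of cosets identifying two points of $|U|$ at each finite level to a single element of $B(\Z_p)$. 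The continuous bijection $|U|/B(\Z_p) \to |U_B|$ is then a homeomorphism because $\phi|_U$ is open.

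Finally, for $U_B \sim \varprojlim U_m^B$, the compatible maps $U_B \to U_m^B$ come from the projections $Z \to \cX^*_{\Gamma_0(p^m)}$ together with the open immersions $U_m^B \hookrightarrow \cX^*_{\Gamma_0(p^m)}$. The homeomorphism $|U_B| \cong \varprojlim |U_m^B|$ follows from the previous step together with $|U|/B(\Z_p) = \varprojlim |U_m|/B(\Z/p^m\Z)$. The density condition is checked locally: I would cover $U_B$ by affinoid rational subsets $W$ pulled back from $Z$ and apply Lemma~\ref{1.1} together with $Z \approx \varprojlim \cX^*_{\Gamma_0(p^m)}(\epsilon)_a$ to get density of $\varinjlim_m \cO(W_m)$ in $\cO(W)$, which is what is required. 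The main obstacle is the topological identification $|U_B| = |U|/B(\Z_p)$, specifically showing that the fibres of $\phi|_U$ are exactly $B(\Z_p)$-orbits; this is where the profinite structure of $B(\Z_p)$ and the Galois-cover structure at each finite level of the modular-curve tower enter essentially.
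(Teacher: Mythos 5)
Your proposal is correct and follows essentially the same route as the paper: define $U_B:=\phi(U)$ using the openness of $\phi: \cX^*_{\Gamma(p^{\infty})}(\epsilon)_a \to \cX^*_{\Gamma_0(p^{\infty})}(\epsilon)_a$ from Proposition \ref{open}, identify $|U_B|$ with $|U|/B(\Z_p)$ via the finite-level Galois covers with group $B(\Z/p^m\Z)$, and verify the $\sim$ condition on rational subsets of the $\Gamma_0(p^\infty)$-level affinoid perfectoid space using Lemma \ref{1.1} and the relation $\approx$. The only cosmetic difference is that you make the $U_m$ stable by intersecting translates, whereas the paper works with preimages $\phi_m^{-1}(V_m^k)$ of rational subsets downstairs, which are automatically stable.
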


\begin{proof} By Proposition \ref{open}, the natural map 
\[\phi: Y=\cX^*_{\Gamma(p^{\infty})}(\epsilon)_a \rightarrow \cX^*_{\Gamma_0(p^{\infty})}(\epsilon)_a\]
of affinoid perfectoid spaces is open. Define 
\[U_B:= \phi (U).
\] 
This is a perfectoid space.

Let $(V^k)_{k\in I}$ be a cover of $U_B$ by rational subsets $V^k \subset \cX^*_{\Gamma_0(p^{\infty})}(\epsilon)_a$.
Then $V^k \approx \varprojlim V_m^k$ for affinoid subsets $V_m^k \subset \cX^*_{\Gamma_0(p^{m})}(\epsilon)_a$. 
Note 
\[ 
U^k:= \phi^{-1}(V^k)
\] 
is still rational in $\cX^*_{\Gamma(p^{\infty})}(\epsilon)_a$. 
Recall that at finite level we have a finite \'etale morphism 
\[\phi_m:\cX^*_{\Gamma(p^{m})}(\epsilon)_a \rightarrow \cX^*_{\Gamma_0(p^{m})}(\epsilon)_a,\]
which is Galois with Galois group $B(\Z/p^m \Z)$. 
Furthermore we have a commutative diagram
\begin{equation}
\xymatrix{
 \cX^*_{\Gamma(p^{\infty})}(\epsilon)_a\ar[r]^{\phi}\ar[d]^{q_m} &  \ar[d]^{q^0_m} \cX^*_{\Gamma_0(p^{\infty})}(\epsilon)_a\\
\cX^*_{\Gamma(p^{m})}(\epsilon)_a\ar[r]^{\phi_m} &  \cX^*_{\Gamma_0(p^{m})}(\epsilon)_a\\
}.
\end{equation}
Then $U^k = q_m^{-1}(\phi_m^{-1}(V_m^k))$ and we get a map $U^k_m:=\phi_m^{-1}(V_m^k) \rightarrow V^k_m$ of affinoid spaces with $U^k_m/B(\Z/p^m\Z)\cong V^k_m$. 
In particular $V^k \approx \varprojlim U_m^k/B(\Z/p^m\Z)$. This also implies that on topological spaces we get a homeomorphism 
\[|V^k| \cong |U^k|/B(\Z_p) \cong \varprojlim|U_m^k|/B(\Z/p^m\Z).\]  
\end{proof}
From the proof of the previous proposition we also see that after passing to rational subsets we may assume that $U$ as well as $U_B$ are both affinoid perfectoid.

\begin{prop} 
Let $i \geq 1$ and $U\in \mathcal{U}_i$. There exists a perfectoid space $U_B$ with underlying topological space $|U|/B(\Z_p)$ and such that
\[U_B\sim \varprojlim_m U_m/B(\Z/p^m\Z).
\] 
\end{prop}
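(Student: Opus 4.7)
The plan is to reduce to the case $i = 0$ handled by the previous proposition via the automorphism of $\cM_\infty^{(0)}$ given by right multiplication by $\gamma^{-i}$. Define $U' := U\gamma^{-i}$. Since $\det \gamma^{-i} = 1 \in \Z_p^*$, the element $\gamma^{-i}$ lies in $G'$ and acts as an automorphism of the adic space $\cM_\infty^{(0)}$, so $U' \subset Y$ is affinoid perfectoid and rational in $Y$. The $\gamma^{-i}B(\Q_p)\gamma^i$-stability condition — more precisely the $\gamma^{-i}B(\Z_p)\gamma^i$-stability of $U$ — translates into $B(\Z_p)$-stability of $U'$ via the identity $(u\gamma^{-i})b = u(\gamma^{-i}b\gamma^i)\gamma^{-i}$, and the inclusion $K(m+2i) \subset \gamma^i K(m)\gamma^{-i}$ at finite levels lets me rewrite $U \approx \varprojlim U_m$ as $U' \approx \varprojlim U'_m$ with $U'_m \subset \cM_m^{(0)}$ after re-indexing. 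Thus $U' \in \mathcal{U}_0$, and the previous proposition produces a perfectoid space $U'_B$ with $|U'_B| = |U'|/B(\Z_p)$ and $U'_B \sim \varprojlim U'_m/B(\Z/p^m\Z)$.

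Under the isomorphism $\gamma^{-i} : U \to U'$ of adic spaces, the $B(\Z_p)$-action on $U$ corresponds to the $B_i$-action on $U'$, where $B_i := \gamma^i B(\Z_p)\gamma^{-i}$ is a subgroup of $B(\Z_p)$ of index $p^{2i}$. Hence in $\mathcal{V}$ we have $U/B(\Z_p) \cong U'/B_i$, and it remains to exhibit $U'/B_i$ as a perfectoid space. For this I observe that at each finite level $m \geq 2i$, $B_i(\Z/p^m\Z) \subset B(\Z/p^m\Z)$ is a subgroup of index $p^{2i}$, and the intermediate map $U'_m/B_i(\Z/p^m\Z) \to U'_m/B(\Z/p^m\Z)$ is a finite \'etale cover of degree $p^{2i}$, arising as a sub-quotient of the finite \'etale Galois cover $U'_m \to U'_m/B(\Z/p^m\Z)$. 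Passing to the inverse limit yields a finite \'etale cover $U'/B_i \to U'_B$ of degree $p^{2i}$; since a finite \'etale cover of an affinoid perfectoid space is again affinoid perfectoid, we obtain $U_B := U'/B_i$ as a perfectoid space.

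The main technical obstacle is verifying the compatibility $U_B \sim \varprojlim U_m/B(\Z/p^m\Z)$ for the \emph{specific} inverse system appearing in the statement, rather than the shifted one $(U'_m/B_i(\Z/p^m\Z))_m$ produced by the construction above. At each finite level $m$, this amounts to identifying $U_m/B(\Z/p^m\Z)$ with $U'_{m+2i}/B_i(\Z/p^{m+2i}\Z)$ via the rigid-space isomorphism induced by right multiplication by $\gamma^{-i}$ on $\cM_m^{(0)} \to \cM_{\gamma^i K(m)\gamma^{-i}}^{(0)}$, together with the finite \'etale map coming from the level inclusion $K(m+2i) \subset \gamma^i K(m)\gamma^{-i}$. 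Once the resulting identifications are checked to be compatible with the transition maps in both inverse systems, the $\sim$-relation for $U_B$ follows from that of $U'_B$ together with the finite \'etale cover structure just constructed.
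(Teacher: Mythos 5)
Your proposal is correct and follows essentially the same route as the paper: translate $U$ by $\gamma^{-i}$ to a $B(\Z_p)$-stable $W=U\gamma^{-i}$ covered by the $i=0$ case, realize $U/B(\Z_p)$ as $W/H_i$ with $H_i=\gamma^iB(\Z_p)\gamma^{-i}$ obtained as a finite \'etale cover of $\phi(W)$ (the paper makes your ``pass to the inverse limit'' precise by noting the finite-level squares are cartesian and pulling back a single finite \'etale map), and finally identify the two towers up to the shift by $2i$. The final compatibility you flag as the remaining check is exactly what the paper carries out with the maps $f^U_W$ and $f^W_U$ whose composites are the natural projections, so the pro-\'etale systems are isomorphic.
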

\begin{proof} 

As $U$ is invariant under $\gamma^{-i}B(\Z_p)\gamma^i$, the affinoid perfectoid space 
\[W:= U\gamma^{-i} \subset Y
\]
is $B(\Z_p)$-invariant. As $U \approx \varprojlim U_m$ for $m$ large enough and as the property $\approx$ is stable under the action of $\GL_2(\Q_p)$, we have $W \approx \varprojlim W_m$ for $m$ large enough. 
Arguing as above we get a perfectoid space 
$\phi(U\gamma^{-i}) \sim \varprojlim W_m/B(\Z /p^m\Z)$
and after possibly shrinking $U$ we may assume that $\phi(U\gamma^{-i})$ is affinoid perfectoid and that 
\[\phi(U\gamma^{-i}) \approx \varprojlim W_m/B(\Z /p^m\Z).
\]
In particular, for $W_m/B(\Z/p^m \Z)= \Spa(S_m, S_m^+)$, the limit 
\[(\varinjlim_m S_m^+)^{\wedge}[p^{-1}]\]
is affinoid perfectoid. 

Define 
\[ H_i:= \gamma^i B(\Z_p) \gamma^{-i} = \left\{ \left(\begin{array}{cc} a & b\\ & d \end{array}\right) \in B(\Z_p) \ | \ b \in p^{2i}\Z_p\right\} \subset B(\Z_p)
\]
and let 
\[ H_{i,m} \subset B(\Z/p^m\Z)
\]
be the image of $H_i$ under the natural reduction map $B(\Z_p)\rightarrow B(\Z/p^m\Z)$.

Consider the tower $(W_m/H_{i,m})_{m \geq 2i}$ of affinoid adic spaces 
\[W_m/H_{i,m}= \spa(R_m,R_m^+).\] 
The natural maps
\[W_m/H_{i,m}\rightarrow W_m/B(\Z/p^m\Z)\]
are finite \'etale maps and the degree does not change for $m$ large enough as $B(\Z_p)/H_i \rightarrow B(\Z/p^m\Z)/H_{i,m}$ is a bijection. In fact the diagram
\[\xymatrix{
W_{m+1}/H_{i,m+1} \ar[d]\ar[r] & \ar[d] W_{m+1}/B(\Z/p^{m+1}\Z)\\
W_m/H_{i,m}\ar[r] & W_m/B(\Z/p^m\Z) \\
}\] 
is cartesian. 
The pullback 
\[ W/H_i:= W_{2i}/H_{i,2i}\times_{W_{2i}/B(\Z/p^{2i}\Z)}\phi(W) \rightarrow \phi(W)\]
is finite \'etale, therefore affinoid perfectoid say $W/H_i=\spa(R,R^+)$.

Furthermore (\cite[Remark 2.4.3]{huber})
\[W/H_i \approx \varprojlim W_m/H_{i,m} \]
and in particular
\[R \cong (\varinjlim R^+_m)^{\wedge}[p^{-1}],\  R^+\cong (\varinjlim R^+_m)^{\wedge}.\]
Define 
\[U_B:= \Spa(R, R^+).\]
We verify
\[U_B \approx \varprojlim_m U_m/B(\Z/p^m \Z)
\]  
by checking that the towers $(U_m/B(\Z/p^m\Z))_{m \geq 4i}$ and $(W_m/H_{i,m})_{m \geq 4i}$ are equivalent, i.e., that the systems agree in $(\cM^{(0)}_{0})_{\proet}$. 

For that note that we have the following commutative diagrams
\[\xymatrix{
\cM^{(0)}_{\infty} \ar[d]\ar[r]^{\gamma^{-i}} & \cM^{(0)}_{\infty} \ar[d]\\
\cM^{(0)}_{K({m})}\ar[r]^{\gamma^{-i}} & \cM^{(0)}_{\gamma^i K(m) \gamma^{-i}} \\
} \ \ \ 
 \xymatrix{
\cM^{(0)}_{\infty} \ar[d]\ar[r]^{\gamma^{i}} & \cM^{(0)}_{\infty} \ar[d]\\
\cM^{(0)}_{K({m})}\ar[r]^{\gamma^{i}} & \cM^{(0)}_{\gamma^{-i} K(m) \gamma^{i}} \\
}
\]

Let $m\geq 2i$, then we have inclusions 
\[ \gamma^{i}K(m+4i)\gamma^{-i} \subset K(m+2i) \subset \gamma^i K(m) \gamma^{-i} \subset K(m-2i) \text{ and}\]
\[ \gamma^{-i}K(m+4i)\gamma^{i} \subset K(m+2i) \subset \gamma^{-i} K(m) \gamma^{i} \subset K(m-2i).\]
The open subspace $W_{m-2i} \subset \cM^{(0)}_{K(m-2i)}$ is the image of $U_m$ under the composite  
\[\cM^{(0)}_{K(m)}\stackrel{\gamma^{-i}}{\rightarrow} \cM^{(0)}_{\gamma^i K(m) \gamma^{-i}}\rightarrow  \cM^{(0)}_{K(m-2i)}, \]
where the last map is the natural one from the inclusion $\gamma^i K(m) \gamma^{-i} \subset K(m-2i)$. We get an induced map 
\[f^U_W(m,i):U_{m}/B(\Z/p^{m}\Z) \rightarrow W_{m-2i}/H_{i,m-2i}.
\]
Similarly we get a map 
\[ f^W_U(m-2i,i) :W_{m-2i}/H_{i,m-2i} \rightarrow U_{m-4i}/B(\Z/p^{m-4i}\Z).\] The composite 
\[f^W_U(m-2i,i)\circ f^U_W(m,i)\]
is the natural projection $U_{m}/B(\Z/p^{m}\Z) \rightarrow U_{m-4i}/B(\Z/p^{m-4i}\Z)$. 
Moreover
\[f^U_W(m-2i,i) \circ f^W_U(m,i)\]
agrees with the natural projection map $W_m/H_{i,m} \rightarrow W_{m-4i}/H_{i,m-4i}$. Therefore the pro-{\'e}tale systems
$(U_m/B(\Z/p^m\Z))_{m \geq 4i}$ and $(W_m/H_{i,m})_{m \geq 4i}$ 
are isomorphic, which is what we wanted to show.
\end{proof}

To summarize, we have proved the following theorem.
\begin{thm}\label{cover}
\begin{enumerate}
	\item There exists a unique (up to unique isomorphism) perfectoid space 
\[\cM_B \sim \varprojlim \cM^{(0)}_{m}/B(\Z/p^m\Z)\]
and a natural surjective map $\varphi:\cM^{(0)}_\infty \rightarrow \cM_B$ such that the diagram 
 \[\xymatrix{
\cM^{(0)}_{\infty} \ar[d]\ar[r]^{\varphi} & \cM_B \ar[d]\\
\cM^{(0)}_{m}\ar[r] & \cM^{(0)}_{m}/B(\Z/p^m \Z) \\
}
\]
commutes.  
\item There is a cover $\mathcal{U}_{\LT}=(U^j)_{j \in J}$ of $\cM^{(0)}_{\infty}$ of affinoid perfectoid spaces $U^j \approx \varprojlim U_m^j \in \bigcup_{i\geq 0}\mathcal{U}_i$ such that the image of $\varphi(U^j)$ under $\varphi:\cM^{(0)}_{\infty}\rightarrow \cM_B$
is affinoid perfectoid and satisfies 
\[ \varphi(U^j) \approx \varprojlim U_m^j/B(\Z/p^m\Z).
\]
\end{enumerate}
\end{thm}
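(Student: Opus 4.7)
My plan is to glue the local quotients produced by the two preceding propositions into a global perfectoid space. Starting from the two propositions, every point of $\cM^{(0)}_\infty$ has a $\gamma^{-i}B(\Z_p)\gamma^i$-stable (and in particular $B(\Z_p)$-stable) neighbourhood $U \in \mathcal{U}_i$ for some $i\geq 0$, and we constructed a perfectoid space $U_B$ with $|U_B| = |U|/B(\Z_p)$ and $U_B \sim \varprojlim U_m/B(\Z/p^m\Z)$. As observed at the end of each of the two propositions, we may after restricting to rational subsets assume both $U$ and $U_B$ are affinoid perfectoid and that $U_B \approx \varprojlim U_m/B(\Z/p^m\Z)$; this produces a cover $\mathcal{U}_{\LT} = (U^j)_{j\in J}$ of $\cM^{(0)}_\infty$ of the desired form, giving part (2) directly from part (1) once the global object exists.

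Paragraph two: to obtain $\cM_B$ I glue the affinoid perfectoid pieces $U^j_B$ along overlaps. The underlying topological space $|\cM_B|$ must be $|\cM^{(0)}_\infty|/B(\Z_p)$, which as noted in the text coincides with $\varprojlim_m |\cM^{(0)}_m/B(\Z/p^m\Z)|$; the map $\varphi:\cM^{(0)}_\infty \to \cM_B$ will on points be the canonical quotient, hence surjective. The key compatibility to check is that, for any two members $U^j, U^k$ of $\mathcal{U}_{\LT}$, the intersection of their images $\varphi(U^j) \cap \varphi(U^k) \subseteq |\cM^{(0)}_\infty|/B(\Z_p)$ inherits the same perfectoid structure from either side. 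I would verify this by passing to a cover of $\varphi(U^j) \cap \varphi(U^k)$ by rational subsets $V^\ell$ that are images of rational $B(\Z_p)$-invariant subsets in both $U^j$ and $U^k$; by Lemma \ref{1.1} each such $V^\ell$ satisfies $V^\ell \approx \varprojlim V^\ell_m/B(\Z/p^m\Z)$ computed from either $U^j$ or $U^k$, and the uniqueness statement of Proposition \ref{unique} forces these two presentations to agree. This gives the gluing data to define $\cM_B$ as a perfectoid space and, simultaneously, the structure sheaf and valuations.

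Paragraph three: having built $\cM_B$ this way, the map $\varphi$ is glued from the quotient maps $U^j \to U^j_B = \varphi(U^j)$, and the commutativity of the diagram in part (1) is immediate since each $U^j \to \varphi(U^j)$ is compatible with the finite-level quotients $U^j_m/B(\Z/p^m\Z)$ by construction. The relation $\cM_B \sim \varprojlim \cM^{(0)}_m/B(\Z/p^m\Z)$ is checked locally on the cover by the $\varphi(U^j)$, where we already have the stronger $\approx$-relation; on topological spaces the inverse-limit homeomorphism was recorded above, and on rings the direct limit of $(B(\Z/p^m\Z))$-invariants is dense in the affinoid perfectoid coordinate ring by construction. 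Uniqueness of $\cM_B$ up to unique isomorphism then follows from Proposition \ref{unique} applied to the perfectoid field $(\C_p, \cO_{\C_p})$.

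The main obstacle is the compatibility on overlaps in paragraph two, because the construction for $U \in \mathcal{U}_i$ with $i \geq 1$ uses an auxiliary translate $W = U\gamma^{-i}$ and the conjugate group $H_i = \gamma^i B(\Z_p)\gamma^{-i}$, so two overlapping patches with different indices $i$ are a priori built using different auxiliary data. Reducing to a common rational refinement $V^\ell$ that lies in both $\mathcal{U}_i$ and $\mathcal{U}_{i'}$ (after intersecting with a suitable translate of $Y$) and invoking Proposition \ref{unique} is the cleanest way to resolve this, but the bookkeeping — ensuring that the pro-\'etale systems at finite level match under the different $\gamma^i$-twists, as in the final diagram-chase of the previous proposition — is the technical heart of the argument.
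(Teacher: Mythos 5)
Your proposal is correct and follows essentially the same route as the paper: the two preceding propositions do all the real work, and the theorem is assembled from them. The one place where the paper is notably slicker is the overlap compatibility you identify as the "technical heart" — the paper sidesteps it entirely via Definition 3.5 (modelled on Definition III.3.5 of the torsion paper), which makes "affinoid perfectoid" an \emph{intrinsic} property of a subset $V \subset |\cM^{(0)}_\infty/B(\Z_p)|$, determined solely by the finite-level images $V_m \subset |\cM^{(0)}_m/B(\Z/p^m\Z)|$ and the $p$-adic completion of $\varinjlim R_m^+$; since the candidate algebra is independent of which translate $\gamma^i$ or auxiliary group $H_i$ was used, the structure sheaf glues automatically and your explicit rational-refinement-plus-uniqueness argument becomes unnecessary, though it is an equivalent and valid substitute.
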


\subsection{The quotient $\cM^{(0)}_{\infty}/B(\Z_p)$}\label{subsec: 2.5}

Finally we argue that the structure sheaf of $\cM_B$ is the expected one, using the pro\'etale site $ (\cM^{(0)}_0)_{\proet}$. Define $B_m:=B(\Z/p^m\Z)$.
First note that 
\[\cM_B \in (\cM^{(0)}_0)_{\proet}\]
as 
\[\cM_m^{(0)}/B(\Z/p^m\Z)\rightarrow \cM_{m-1}^{(0)}/B(\Z/p^{m-1}\Z)
\]
is finite \'etale surjective and $\cM_m^{(0)}/B(\Z/p^m\Z)$ is \'etale over $\cM^{(0)}_0$. 

\begin{prop} Let $U=\spa(R,R^+) \in \mathcal{U}_{\LT}$ 
with
$U \approx \varprojlim U_m=\spa(R_m, R^+_m)$. Then the natural map of affinoid algebras
\[ \left(S:=S^+\left[\frac{1}{p}\right], S^+:=\left(\varinjlim {R^+_m}^{B_m}\right)^{\wedge}\right) \rightarrow \left(R^{B(\Z_p)}, {R^+}^{B(Z_p)}\right)
\]
is an isomorphism. 
\end{prop}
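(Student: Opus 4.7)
I plan to construct the natural map, verify injectivity together with a topology-match lemma, prove surjectivity first at the Banach level via Reynolds averaging, and then bootstrap to the integral level. At each finite level $m$ the transition map $R_m \to R$ is $B_m$-equivariant (the kernel $K_m := \ker(B(\Z_p) \to B_m)$ acts trivially on $R_m$, so the $B(\Z_p)$-action on the image factors through $B_m$), hence carries $(R_m^+)^{B_m}$ into $(R^+)^{B(\Z_p)}$; the latter is $p$-adically complete as a closed subring of $R^+$ (being an intersection of kernels of the continuous endomorphisms $b-1$), so passing to colimits and $p$-adic completion gives the map $(S^+, S) \to ((R^+)^{B(\Z_p)}, R^{B(\Z_p)})$.

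For injectivity and the topology match, note that at each level $p$-torsion-freeness of $R_m^+$ gives $p^n R_m^+ \cap (R_m^+)^{B_m} = p^n (R_m^+)^{B_m}$: if $p^n y$ is $B_m$-invariant then so is $y$. Hence the $p$-adic topology on $(R_m^+)^{B_m}$ coincides with the subspace topology from $R_m^+$. Passing to filtered colimits and $p$-adic completions, $S^+ \hookrightarrow R^+$ is injective with $p$-adically closed image, and $S \hookrightarrow R$ is a closed Banach-subalgebra inclusion; in particular $p^n R^+ \cap S^+ = p^n S^+$ persists, using at the finite level also that $R_m^+ = R_m \cap R^+$.

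For surjectivity, given $r \in R^{B(\Z_p)}$ consider the Reynolds projector $\pi_m := |B_m|^{-1} \sum_{b \in B_m} b$, which is well-defined since $|B_m|$ is a $p$-power and therefore invertible in $\C_p$. The key fact is that $B(\Z_p)$ acts on $R$ by isometries for the spectral norm, which is an intrinsic invariant of the uniform Banach algebra $R$ and thus preserved by any automorphism. Using invariance of $r$, one has $\pi_{m_k}(y_k) - r = \pi_{m_k}(y_k - r)$; the ultrametric estimate combined with isometry gives $\|\pi_{m_k}(y_k) - r\|_R \leq |B_{m_k}|_p^{-1} \|y_k - r\|_R$. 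Choosing $y_k \in R_{m_k}$ with $\|y_k - r\| \leq p^{-n_k - v_p(|B_{m_k}|)}$ for $n_k \to \infty$ — possible by the $p$-adic density $R^+ = \bigl(\varinjlim R_m^+\bigr)^\wedge$ via a diagonal choice — makes $\pi_{m_k}(y_k) \in R_{m_k}^{B_{m_k}} \subseteq \varinjlim R_m^{B_m}$ converge to $r$ in $R$. The closed embedding $S \hookrightarrow R$ then places $r$ in $S$. For the integral statement, any $x \in (R^+)^{B(\Z_p)} \subseteq R^{B(\Z_p)} = S$ can be written as $x = s/p^k$ with $s \in S^+$ and $k \geq 0$ minimal; then $s = p^k x \in p^k R^+ \cap S^+ = p^k S^+$ forces $k = 0$, so $x \in S^+$, yielding $(R^+)^{B(\Z_p)} = S^+$.

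The main obstacle is the Banach-level surjectivity. The Reynolds projector $\pi_m$ is \emph{not} norm-contracting in the non-archimedean setting — its operator norm can be as large as $|B_m|_p^{-1} = p^{v_p(|B_m|)}$, which grows with $m$ — so the approximations $y_k$ must be chosen with $p$-adic accuracy improving faster than $v_p(|B_{m_k}|)$ grows. This is feasible because the density of $\varinjlim R_m^+$ in $R^+$ allows arbitrary $p$-adic accuracy at the cost of taking larger levels, but a diagonal argument is needed to couple the level $m_k$ and the required closeness $n_k + v_p(|B_{m_k}|)$; this delicate balancing is the substantive technical point, and once it is handled the remaining steps are straightforward bookkeeping.
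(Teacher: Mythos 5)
Your approach is genuinely different from the paper's. The paper does no direct Banach approximation at all: it observes that $U$ and $\varphi(U)$ are objects of $(\cM^{(0)}_0)_{\proet}$, identifies $(S,S^+)$ with $(\widehat{\cO}_{\cM^{(0)}_0}(\varphi(U)),\widehat{\cO}^+_{\cM^{(0)}_0}(\varphi(U)))$ via \cite[Lemma 4.10(iii)]{scholzepHT}, uses the pro-\'etale sheaf condition for the cover $U\to\varphi(U)$ to write $S$ as an equalizer, identifies $U\times_{\varphi(U)}U$ with $U\times\underline{B(\Z_p)}$ via Lemma \ref{free}, and then evaluates that via \cite[Corollary 6.6]{scholzepHT} to get $\Map_{\cont}(B(\Z_p),R)$, from which the equalizer is visibly $R^{B(\Z_p)}$. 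All the hard analysis is outsourced to [scholzepHT].

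The core of your Reynolds argument, by contrast, has a genuine gap that you flag but do not close. You need, for some sequence $m_k\to\infty$, elements $y_k\in R_{m_k}^+$ with
\[
\|y_k-r\|\le p^{-n_k-v_p(|B_{m_k}|)},\qquad n_k\to\infty,
\]
and you assert this is ``possible by the $p$-adic density $R^+=(\varinjlim R_m^+)^\wedge$ via a diagonal choice.'' But density is purely qualitative: for each target accuracy $\epsilon$ it produces \emph{some} level $m(\epsilon)$ at which $\epsilon$-approximation is possible, with no control whatsoever on how fast $m(\epsilon)$ grows. Since $v_p(|B_m|)=3m-2$ (note, incidentally, that $|B_m|=(p-1)^2p^{3m-2}$ is not a $p$-power, though this only affects the phrasing), the Reynolds projector blows up norms by $p^{3m-2}$, and what you actually need is the quantitative statement $v_p\bigl(\mathrm{dist}(r,R_m^+)\bigr)-3m\to\infty$. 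Nothing in the hypothesis $U\approx\varprojlim U_m$ supplies this: in the basic perfectoid model $R_m^+=\cO_{\C_p}\langle X^{1/p^m}\rangle$, an element $r=\sum_n a_nX^{1/p^n}$ with $|a_n|=p^{-n}$ lies in $R^+$ and has $\mathrm{dist}(r,R_m^+)=p^{-(m+1)}$, linear with slope $1<3$, so the corresponding diagonal argument would fail. One would therefore need to prove a nontrivial convergence-rate estimate specific to the tower (or a separate density statement for invariants, which is essentially the proposition itself), neither of which you supply.

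Two smaller points. First, as written $\pi_m=|B_m|^{-1}\sum_{b\in B_m}b$ is only defined on $R_m$, since $B_m$ does not act on $R$; the identity $\pi_{m_k}(y_k)-r=\pi_{m_k}(y_k-r)$ implicitly requires extending $\pi_{m_k}$ to $R$ by choosing lifts $\tilde b\in B(\Z_p)$ of each $b\in B_{m_k}$, after which the restriction to $R_{m_k}$ is independent of the lifts and one still has $\tilde\Pi_{m_k}(r)=r$; this should be said. Second, your topology-match bookkeeping (injectivity of $S^+\hookrightarrow R^+$ with $p^nR^+\cap S^+=p^nS^+$) is correct. But the approximation step is where the proof actually lives, and as it stands it does not go through.
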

\begin{proof} We have seen that $(S=\cO_{\cM_B}(\varphi(U)), S^+=\cO^+_{\cM_B}(\varphi(U)))$ is an affinoid perfectoid algebra. Consider the completed structure sheaf $\widehat{\cO}_{\cM^{(0)}_{0}}$ and the integral completed structure sheaf $\widehat{\cO}^+_{\cM^{(0)}_{0}}$ as defined in \cite[Def.\ 4.1]{scholzepHT}. These are sheaves on $(\cM^{(0)}_{0})_{\proet}$. Now $U, \varphi(U) \in (\cM^{(0)}_{0})_{\proet}$ are both affinoid perfectoid therefore by Lemma 4.10 (iii) of \cite{scholzepHT} 
\[S=\widehat{\cO}_{\cM^{(0)}_{0}}(\varphi(U)) , \ S^+=\widehat{\cO}^+_{\cM^{(0)}_{0}}(\varphi(U)).\]
Furthermore $U\rightarrow \varphi(U)$ is a covering so 
\[S=\widehat{\cO}_{\cM^{(0)}_{0}}(\varphi(U)) = eq \left(\widehat{\cO}_{\cM^{(0)}_{0}}(U)\rightrightarrows \widehat{\cO}_{\cM^{(0)}_{0}}(U\times_{\varphi(U)}U)\right)\]
and analogously for $S^+$. 
Now $\widehat{\cO}_{\cM^{(0)}_{0}}(U)=R$ and $\widehat{\cO}^+_{\cM^{(0)}_{0}}(U) = R^+$, again by Lemma 4.10 (iii) of \cite{scholzepHT}. Furthermore  
\[ U\times_{\varphi(U)}U  \approx \varprojlim_m U_m \times_{U_m/B(\Z/p^m\Z)} U_m \cong \varprojlim U_m \times B_m \approx U \times \underline{B(\Z_p)},\]
where the isomorphism in the middle is implied by Lemma \ref{free}. Corollary 6.6 of \cite{scholzepHT} then implies that 
\[
\widehat{\cO}_{\cM^{(0)}_{\LT,0}}(U\times_{\varphi(U)}U) = \operatorname{Map}_{\mathrm{cont}}(B(\Z_p),R)
\] 
\[
\widehat{\cO}^+_{\cM^{(0)}_{\LT,0}}(U\times_{\varphi(U)}U) = \operatorname{Map}_{\mathrm{cont}}(B(\Z_p),R^+)
\] 
and the two maps in each of the equalizers above are given as 
$r \mapsto (g \mapsto r)$ and $r \mapsto (g \mapsto gr)$. Therefore $S \cong R^{B(\Z_p)}$ and $S^+ \cong {R^+}^{B(\Z_p)}$.  
\end{proof}

In particular we get the following corollary, which finishes the proof of Theorem \ref{LTquotient}.
\begin{corollary} We have an isomorphism in the category $\mathcal{V}$
\[ \cM_B \cong \cM^{(0)}_{\infty}/B(\Z_p).
\]
In particular, $\cM^{(0)}_{\infty}/B(\Z_p)$ is a perfectoid space.
\end{corollary}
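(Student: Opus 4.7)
The plan is to verify, component by component, that the triple $(|\cM_B|, \cO_{\cM_B}, \{|\cdot|_x\}_{x \in |\cM_B|})$ representing $\cM_B$ as a perfectoid space coincides with the triple defining $\cM^{(0)}_{\infty}/B(\Z_p)$ in $\mathcal{V}$, as constructed in Section \ref{subsec: quot}.

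First, the underlying topological spaces agree by construction: $|\cM_B|$ was defined precisely to be $|\cM^{(0)}_{\infty}|/B(\Z_p)$, which is the topological space underlying the quotient $\cM^{(0)}_\infty/B(\Z_p) \in \mathcal{V}$. Moreover, the projection $\varphi: \cM^{(0)}_\infty \to \cM_B$ produced by Theorem \ref{cover} agrees on topological spaces with the canonical open quotient map $p$ of Section \ref{subsec: quot}.

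Second, I would match the structure sheaves using the cover $\mathcal{U}_{\LT}$ from Theorem \ref{cover}. For any $U=\spa(R,R^+) \in \mathcal{U}_{\LT}$, the subset $U$ is $B(\Z_p)$-stable (since $B(\Z_p) \subset \gamma^{-i} B(\Z_p)\gamma^i$), so $\varphi^{-1}(\varphi(U)) = U$ and the quotient structure sheaf $(\varphi_* \cO_{\cM^{(0)}_\infty})^{B(\Z_p)}$ takes value $(R^{B(\Z_p)},(R^+)^{B(\Z_p)})$ on $\varphi(U)$. The preceding Proposition identifies this with $(S,S^+)=\cO_{\cM_B}(\varphi(U))$. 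The sets $\{\varphi(U) : U \in \mathcal{U}_{\LT}\}$ cover $\cM_B$ (since $\varphi$ is surjective and the $U$ cover $\cM^{(0)}_{\infty}$), and applying the same identification to $B(\Z_p)$-stable rational subsets of each $U$ (which exist in abundance by Lemma \ref{1.1} and are preserved by the group action) refines this to a basis of the topology of $|\cM_B|$ on which the two sheaves coincide. Since both are sheaves, they agree globally.

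Third, the valuations match tautologically. By Section \ref{subsec: quot}, the valuation on $\cO_{\cM^{(0)}_\infty/B(\Z_p), \bar x}$ at a point $\bar x$ is defined as the composition of the natural map to $\cO_{\cM^{(0)}_\infty, x}$ for any preimage $x$, with $|\cdot|_x$. Under the identification of structure sheaves above, this map coincides with the one induced by $\varphi$ on stalks, which is the pullback defining the valuation on the perfectoid space $\cM_B$. Compatibility with morphisms is immediate, yielding an isomorphism in $\mathcal{V}$.

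The main obstacle was the algebraic identification $S^+ \cong (R^+)^{B(\Z_p)}$, already carried out in the preceding Proposition via the pro-étale formalism and Lemma \ref{free}. Given that key input, the present corollary amounts to patching together the local identifications into an isomorphism of objects in $\mathcal{V}$, which is essentially bookkeeping.
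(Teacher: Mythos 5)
Your proposal is correct and follows exactly the route the paper intends: the paper states this corollary with no separate proof, treating it as an immediate consequence of the preceding Proposition, and your write-up simply makes explicit the bookkeeping (matching topological spaces by construction, structure sheaves via the $B(\Z_p)$-stable cover $\mathcal{U}_{\LT}$ and its stable rational subsets, and the valuations). The one key input you rely on, $S^+\cong (R^+)^{B(\Z_p)}$ from the pro-étale equalizer argument, is precisely the content the paper isolates in the Proposition, so nothing is missing.
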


\subsection{The quotient $\cM_{\infty}/B(\Q_p)$}\label{subsec:3.6}
We now use the above results and the fact that the Gross--Hopkins period morphism $\pi_{\mathrm{GH},0}$ at level zero has local sections to prove the following theorem.
\begin{thm}\label{secmainthm} The object 
\[\cM_{\infty}/B(\Q_p)\]
of $\mathcal{V}$ is a perfectoid space. 
\end{thm}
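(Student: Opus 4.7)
My plan is to combine the perfectoidness of $\cM^{(0)}_\infty/B(\Z_p)$ (Theorem \ref{LTquotient}) with the local sections of $\pi_{\GH,0}$, via Lemma \ref{immersion}. First I would reduce to a single component: since $\cM_\infty = \bigsqcup_{i\in\Z}\cM^{(i)}_\infty$ and $B(\Q_p)$ acts transitively on the components through the $p$-adic valuation of the determinant, with stabilizer $B(\Q_p)' := B(\Q_p)\cap G'$, the inclusion $\cM^{(0)}_\infty\hookrightarrow\cM_\infty$ should induce an isomorphism $\cM^{(0)}_\infty/B(\Q_p)' \xrightarrow{\sim} \cM_\infty/B(\Q_p)$ in $\mathcal V$. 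It therefore suffices to show the left-hand side is perfectoid.

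Next I would build local perfectoid charts on $\cM^{(0)}_\infty/B(\Q_p)'$ using $\pi_{\GH,0}$. Given $y\in\mathbb{P}^1_{\C_p}$, the local section property yields an affinoid open $V\ni y$ in $\mathbb{P}^1_{\C_p}$ together with an affinoid $U_0\subset\cM^{(0)}_0$ such that $\pi_{\GH,0}|_{U_0}\colon U_0\xrightarrow{\sim}V$. Set $\tilde U := p_0^{-1}(U_0)\subset\cM^{(0)}_\infty$; as the preimage at finite level of an affinoid open under the finite \'etale projections $p_m$, this should be a $\GL_2(\Z_p)$-stable affinoid perfectoid open, and the construction of Section \ref{subsec: 3.4} applied to $\tilde U$ would give that $\tilde U/B(\Z_p)$ is also affinoid perfectoid.

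To upgrade from a $B(\Z_p)$-quotient to a $B(\Q_p)'$-quotient I would invoke Lemma \ref{immersion} applied to the $B(\Z_p)$-equivariant open immersion $\tilde U\hookrightarrow\cM^{(0)}_\infty$ (noting that $B(\Z_p)$ is open in the locally profinite group $B(\Q_p)'$). This produces a $B(\Q_p)'$-equivariant morphism
\[
\tilde U\times^{B(\Z_p)}B(\Q_p)' \longrightarrow \cM^{(0)}_\infty,
\]
which will be an open immersion as soon as the induced map on topological spaces is injective. Passing to $B(\Q_p)'$-quotients and using Proposition \ref{nh} should then yield an open immersion $\tilde U/B(\Z_p)\hookrightarrow\cM^{(0)}_\infty/B(\Q_p)'$, and varying $y$ and $V$ over a (finite, by quasi-compactness) cover of $\mathbb{P}^1_{\C_p}$ will cover $\cM^{(0)}_\infty/B(\Q_p)'$ by affinoid perfectoid opens, giving the desired perfectoid structure.

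The hard part will be the injectivity of the displayed map, which reduces to the claim: if $x\in\tilde U$ and $g\in B(\Q_p)'$ satisfy $xg\in\tilde U$, then $g\in B(\Z_p)$. To prove it I would use that $\pi_{\GH}$ is $B(\Q_p)$-invariant by construction, so $\pi_{\GH,0}(p_0(xg)) = \pi_{\GH,0}(p_0(x))$; the assumption $xg\in\tilde U$ forces $p_0(xg)\in U_0$, and injectivity of $\pi_{\GH,0}|_{U_0}$ then yields $p_0(xg) = p_0(x)$; finally, the freeness of the $\GL_2(\Q_p)$-action on $\cM^{(0)}_\infty$ at infinite level (visible from the moduli description $(G,\rho,\alpha)$ as $\alpha$ is an isomorphism of Tate modules) gives $g\in\GL_2(\Z_p)\cap B(\Q_p)' = B(\Z_p)$. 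This is the step where the geometry of $\pi_{\GH}$ is genuinely used: the local section provides a slice $U_0\cong V$ across which the invariance of $\pi_{\GH}$ together with the free action rules out any identification by elements of $B(\Q_p)'\setminus B(\Z_p)$.
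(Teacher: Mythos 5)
Your proposal follows the paper's proof essentially verbatim: the reduction to $\cM^{(0)}_\infty/B'$ (your $B(\Q_p)'$ is the paper's $B'$), the use of local sections of $\pi_{\GH,0}$ together with Lemma \ref{immersion} and Proposition \ref{nh}, and the check of topological injectivity via the fibre description $G'/\GL_2(\Z_p)\cong B'/B(\Z_p)$ (which is what the freeness of the $G'$-action on $\cM^{(0)}_\infty$ encodes). One small inaccuracy worth flagging: you assert that $\tilde U = p_0^{-1}(U_0)$ is affinoid perfectoid and that the construction of Section \ref{subsec: 3.4} makes $\tilde U/B(\Z_p)$ affinoid perfectoid, but neither claim is established (the preimage under $p_0$ of an arbitrary affinoid in $\cM^{(0)}_0$ need not be affinoid) and neither is needed --- all that is required is that $\tilde U/B(\Z_p)$ is perfectoid, which holds simply because it is an open subspace of the perfectoid space $\cM^{(0)}_\infty/B(\Z_p)$ of Theorem \ref{LTquotient}.
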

\begin{proof} Define the subgroup 
 \[B':=\{b \in B(\Q_p)| \det(b) \in \Z_p^*\}\subset B(\Q_p).\] 
It is the kernel of the map $val\circ \det: B(\Q_p) \rightarrow \Z$ and $B(\Q_p) \cong B' \rtimes \Z$.
We have 
\[\cM_{\infty} \cong \cM^{(0)}_{\infty} \times \underline{\Z} \cong \bigsqcup_{i \in \Z} \cM^{(0)}_{\infty}
\]
and 
\[B'= \mathrm{Stab}_{B(\Q_p)}(\cM^{(0)}_{\infty}).\]
This implies that in $\mathcal{V}$ we have an isomorphism 
\[\cM_{\infty}/B(\Q_p) \cong \cM^{(0)}_{\infty}/B',
\]
so it suffices to show that the latter object is a perfectoid space.

The Gross--Hopkins period map 
\[\pi_{\mathrm{GH},0}:\cM^{(0)}_{0}\rightarrow \mathbb{P}_{\C_p}^1\]
has local sections (cf.\ Lemma 6.1.4 in \cite{SW}). 

Let $U\subset \cM^{(0)}_{0}$ be open affinoid such that $\pi_{\mathrm{GH},0}|_U$ is an isomorphism onto its image. As before let $p_0:\cM_{\infty}^{(0)}\rightarrow \cM^{(0)}_{0}$ be the natural map and consider $p_0^{-1}(U) \subset \cM^{(0)}_{\infty}$. It has an action of $B(\Z_p)$.
Consider the object $p_0^{-1}(U)\times^{B(\Z_p)} B' \in \mathcal{V}$ as defined in Section \ref{subsec:quotients}. 
By Lemma \ref{immersion} we get a natural map  
\[\iota_U: p_0^{-1}(U)\times^{B(\Z_p)} B' \rightarrow \cM^{(0)}_{\infty}. 
\] 
in $\mathcal{V}$. As the geometric fibres of $\pi_{\GH,0}$ are isomorphic to $G'/\GL_2(\Z_p) \cong B'/B(\Z_p)$ and  $\pi_{\mathrm{GH},0}|_U$ is an isomorphism onto its image, $|\iota_U|$ is an injection, therefore $\iota_U$ is an open embedding (again by Lemma \ref{immersion}).  
But then  
\[(p_0^{-1}(U)\times^{B(\Z_p)} B')/B' \hookrightarrow \cM^{(0)}_{\infty}/B' 
\] 
is also an open embedding.

Using Proposition \ref{nh} we see that
\[(p_0^{-1}(U)\times^{B(\Z_p)} B')/ B' \cong p_0^{-1}(U)/B(\Z_p)
\]
is a perfectoid space. This finishes the proof as the $(p_0^{-1}(U)\times^{B(\Z_p)} B')/ B'$ cover $\cM^{(0)}_{\infty}/B' $.
\end{proof}

The Gross--Hopkins period morphism $\pi_{\GH}:\cM_\infty \rightarrow \mathbb{P}^1_{\C_p}$ factors through the quotient $\cM_\infty/B(\Q_p)$. We denote the induced map by 
\[\overline{\pi}_{\GH}:\cM_\infty/B(\Q_p)\rightarrow \mathbb{P}^1_{\C_p}.\]

\begin{prop}\label{qc} The map $\overline{\pi}_{\GH}: \cM_{\infty}/B(\Q_p) \rightarrow \mathbb{P}_{\C_p}^1$ is quasicompact.
\end{prop}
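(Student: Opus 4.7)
The strategy is to use the local section property of the Gross--Hopkins period map at level $0$ to cover the source by finitely many quasi-compact pieces mapping into quasi-compact pieces of $\mathbb{P}^1_{\C_p}$. Via the identification $\cM_\infty/B(\Q_p) \cong \cM^{(0)}_\infty/B'$ from the proof of Theorem~\ref{secmainthm}, I would work with $\cM^{(0)}_\infty/B'$. By Lemma 6.1.4 of \cite{SW} the morphism $\pi_{\GH,0}: \cM^{(0)}_0 \to \mathbb{P}^1_{\C_p}$ admits local sections, so every point of $\mathbb{P}^1_{\C_p}$ has an affinoid neighbourhood $V$ arising as $\pi_{\GH,0}(U)$ for some affinoid open $U \subset \cM^{(0)}_0$ with $\pi_{\GH,0}|_U$ an isomorphism onto $V$. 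Quasi-compactness of $\mathbb{P}^1_{\C_p}$ then yields a finite cover $(V_i = \pi_{\GH,0}(U_i))_{i=1,\dots,n}$ of this form.

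Next, from the proof of Theorem~\ref{secmainthm}, each $(p_0^{-1}(U_i)\times^{B(\Z_p)} B')/B' \cong p_0^{-1}(U_i)/B(\Z_p)$ embeds as an open subspace $W_i$ of $\cM^{(0)}_\infty/B'$, and clearly maps into $V_i$ under $\overline{\pi}_{\GH}$. I would then identify $W_i$ with $\overline{\pi}_{\GH}^{-1}(V_i)$. The key input is the Iwasawa decomposition $\GL_2(\Q_p) = B(\Q_p)\GL_2(\Z_p)$, which restricts to $G' = B'\cdot \GL_2(\Z_p)$ and makes $B'/B(\Z_p) \cong G'/\GL_2(\Z_p)$ act simply transitively on the discrete geometric fibres of $\pi_{\GH,0}$. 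Consequently, every point of $\cM^{(0)}_\infty$ whose $\pi_{\GH}$-image lies in $V_i$ admits a $B'$-translate in $p_0^{-1}(U_i)$, yielding the set-theoretic equality $W_i = \overline{\pi}_{\GH}^{-1}(V_i)$. Each $W_i$ is quasi-compact, being the image of the affinoid perfectoid $p_0^{-1}(U_i)$ under the open continuous projection to $\cM^{(0)}_\infty/B(\Z_p)$; equivalently, it is a finite union of affinoid perfectoid pieces from Theorem~\ref{cover}(2).

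To conclude quasi-compactness of $\overline{\pi}_{\GH}$, given any quasi-compact open $V \subset \mathbb{P}^1_{\C_p}$ one writes
\[
\overline{\pi}_{\GH}^{-1}(V) \;=\; \bigcup_{i=1}^n \left(\overline{\pi}_{\GH}|_{W_i}\right)^{-1}(V \cap V_i),
\]
where each $V \cap V_i$ is a quasi-compact open of the affinoid $V_i$, and the restriction $\overline{\pi}_{\GH}|_{W_i}: W_i \to V_i$ is a morphism between quasi-compact adic spaces landing in an affinoid target, hence is itself quasi-compact (preimages of rational subsets are rational). The resulting finite union of quasi-compacts is quasi-compact, as required. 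The main obstacle is the identification $W_i = \overline{\pi}_{\GH}^{-1}(V_i)$: the Iwasawa decomposition supplies this on geometric points, but one must verify the equality on the full topological spaces of the relevant adic quotients, i.e.\ check that the open embedding $W_i \hookrightarrow \cM^{(0)}_\infty/B'$ of the proof of Theorem~\ref{secmainthm} captures every $B'$-orbit in $\pi_{\GH}^{-1}(V_i)$.
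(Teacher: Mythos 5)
Your proof follows essentially the same approach as the paper: use local sections of $\pi_{\GH,0}$ to cover $\mathbb{P}^1_{\C_p}$ by affinoids $V_i$, identify $\overline{\pi}_{\GH}^{-1}(V_i)\cong (p_0^{-1}(U_i)\times^{B(\Z_p)}B')/B'\cong p_0^{-1}(U_i)/B(\Z_p)$ via $B'/B(\Z_p)\cong G'/\GL_2(\Z_p)$, and deduce quasicompactness from that of $p_0^{-1}(U_i)$. One small caveat: $p_0^{-1}(U_i)$ need not be affinoid perfectoid for a general affinoid $U_i\subset\cM^{(0)}_0$ (only a specific increasing family has this property); what is actually needed, and what the paper cites \cite[Prop.\ 3.12]{scholzepHT} for, is merely its quasi-compactness, which holds since $p_0$ is pro-finite \'etale.
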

\begin{proof} Again we use that the Gross--Hopkins period map at level zero has local sections. So we can cover $\mathbb{P}_{\C_p}^1$ by affinoid opens $U$ that admit a section to some affinoid $V \subset \cM^{(0)}_{0}$. Now for any such $U$, 
\[\overline{\pi}_{\GH}^{-1}(U) \cong (p_0^{-1}(V)\times^{B(\Z_p)} B')/ B'\]
as $B'/B(\Z_p) \cong G'/\GL_2(\Z_p)$. So it suffices to show that each $(p_0^{-1}(V)\times^{B(\Z_p)} B')/ B'$ is quasi-compact. By \cite[Prop.\ 3.12]{scholzepHT}, $p_0^{-1}(V)$ is quasicompact. 
But now 
\[(p_0^{-1}(V)\times^{B(\Z_p)} B')/ B' \cong p_0^{-1}(V)/{B(\Z_p)}\]
and the latter is quasi-compact as $p_0^{-1}(V)$ is. 
\end{proof}

\begin{rem} From Proposition \ref{univspec} we see that the space $\cM_\infty$ is partially proper, so $\cM_\infty/B(\Q_p)$ is partially proper and by the last proposition it is quasi-compact, therefore $\cM_\infty/B(\Q_p)$ may be called proper.
\end{rem}

We finish this section by proving that the fibres of $\overline{\pi}_{\GH}$ are affinoid perfectoid. For that we start with the following result on rank one points. 

\begin{prop}\label{rk1} Let $x \in \mathbb{P}^1_{\C_p}$ be a rank one point. Then there exists an affinoid open neighbourhood $x \in U$ such that the preimage $\overline{\pi}^{-1}_{\GH}(U)$ is affinoid perfectoid. 
\end{prop}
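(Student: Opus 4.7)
The plan: combine the existence of local sections for $\pi_{\GH,0}$ at level zero with the affinoid perfectoid cover of $\cM_B$ produced by Theorem \ref{cover}, using the rank one hypothesis to ensure that the preimage lands inside a single piece of the cover.

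First, since $\pi_{\GH,0}$ admits local sections (Lemma 6.1.4 of \cite{SW}), I would choose an affinoid neighbourhood $U$ of $x$ together with an affinoid open $V \subset \cM^{(0)}_0$ such that $\pi_{\GH,0}$ restricts to an isomorphism $V \xrightarrow{\sim} U$, with section $s : U \to V$. Setting $y := s(x)$, the same argument as in the proof of Proposition \ref{qc} (using the Iwasawa-type identification $B'/B(\Z_p) \cong G'/\GL_2(\Z_p)$) gives $\overline{\pi}_{\GH}^{-1}(U) \cong p_0^{-1}(V)/B(\Z_p) = \varphi(p_0^{-1}(V))$, where $\varphi : \cM^{(0)}_\infty \to \cM_B$ denotes the projection. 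So the task reduces to showing that $\varphi(p_0^{-1}(V))$ is affinoid perfectoid after possibly shrinking $U$.

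Next, I would invoke Theorem \ref{cover}(2) to obtain the cover $\mathcal{U}_{\LT} = (U^j)_{j \in J}$ of $\cM^{(0)}_\infty$ by affinoid perfectoid opens with $U^j \approx \varprojlim U^j_m$ and $U^j \in \mathcal{U}_{i(j)}$, such that each $\varphi(U^j)$ is affinoid perfectoid with $\varphi(U^j) \approx \varprojlim U^j_m/B(\Z/p^m\Z)$. The key geometric point is that the fibre $p_0^{-1}(y) \subset \cM^{(0)}_\infty$ is a quasi-compact profinite set of rank one points, namely a pro-\'etale $\GL_2(\Z_p)$-torsor over the rank one point $y$. Because the $U^j$ form an open cover and $p_0^{-1}(y)$ is quasi-compact, finitely many $U^j$ suffice, and by exploiting that $p_0^{-1}(y)$ is a single $\GL_2(\Z_p)$-orbit together with the $\gamma^{-i}B(\Z_p)\gamma^i$-invariance of the $U^j$'s, one should be able to arrange that $p_0^{-1}(y)$ is contained in a single $U^j$. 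Then by openness of $U^j$ and compactness of the fibre, a small enough affinoid neighbourhood of $p_0^{-1}(y)$ is still contained in $U^j$; taking $V$ small enough, $p_0^{-1}(V)$ becomes a $B(\Z_p)$-stable rational subset of $U^j$.

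Finally, once $p_0^{-1}(V) \subset U^j$ is a $B(\Z_p)$-stable rational subset, Lemma \ref{1.1} applied to the tower $U^j \approx \varprojlim U^j_m$, together with the finite \'etale quotient description of $U^j_m \to U^j_m/B(\Z/p^m\Z)$, implies that $\varphi(p_0^{-1}(V))$ is a rational subset of the affinoid perfectoid $\varphi(U^j)$, and is therefore itself affinoid perfectoid. The main obstacle I expect is the middle step: arranging that $p_0^{-1}(y)$ fits inside a single $U^j$ of the cover. Without the rank one hypothesis the fibre could, in principle, be distributed across infinitely many translates $Y\gamma^i$ and no single cover element would suffice; the argument must leverage the discreteness of the fibre of $\pi_{\GH,0}$ at the rank one point $x$ together with the fact that the $\mathcal{U}_i$-elements absorb entire $\gamma^{-i}B(\Z_p)\gamma^i$-orbits.
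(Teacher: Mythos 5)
Your outer reduction is the same as the paper's: local sections of $\pi_{\GH,0}$ identify $\overline{\pi}_{\GH}^{-1}(U)$ with $p_0^{-1}(V)/B(\Z_p)$, and the goal becomes to place a suitable $B(\Z_p)$-stable neighbourhood inside the machinery of Theorem \ref{cover}. But the step you yourself flag as the main obstacle is a genuine gap, and the mechanism you propose for closing it does not work. The fibre $p_0^{-1}(y)$ is a full $\GL_2(\Z_p)$-orbit, whereas every element of $\mathcal{U}_i$ is by construction contained in a single translate $Y\gamma^i$ and is only required to be $\gamma^{-i}B(\Z_p)\gamma^i$-stable. Since $Y\gamma^i \subset \pi_{\HT}^{-1}(\mathbb{D}^1\gamma^i)$ and the sets $Y\gamma^i$, $X_n\gamma^i$ are not $\GL_2(\Z_p)$-stable (already the Weyl element moves a point $z$ with $|z|<1$ to $1/z$ outside $\mathbb{D}^1$), there is no reason for a single $U^j$ to absorb a whole $\GL_2(\Z_p)$-orbit; "$B(\Z_p)$-invariance of the $U^j$" only controls a $B(\Z_p)$-orbit, and $B(\Z_p)\backslash\GL_2(\Z_p)\cong\mathbb{P}^1(\Z_p)$ is infinite. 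You have also misplaced where the rank one hypothesis enters: $p_0^{-1}(y)$ is quasi-compact for every $y$ (it is a torsor under the compact group $\GL_2(\Z_p)$), independently of the rank of $x$, so compactness of the fibre is not what rank one buys you.

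The paper's proof goes the other way around. It does not try to contain the fibre in one cover element; it finds $W\in\mathcal{U}_{\LT}$ \emph{contained in} $p_0^{-1}(V)$ with $x\in(\pi_{\GH,0}\circ p_0)(W)$, writes $W=p_m^{-1}(W_m)$ at a finite level $m$, and then uses the rank one hypothesis through Lemma \ref{nbhd}: because a rank one point is the intersection of its affinoid neighbourhoods, a lift $x_m$ of $x$ admits a rational neighbourhood $V_m$ whose translates under $B(\Z/p^m\Z)$ are pairwise disjoint away from the stabilizer. The resulting induced affinoid $U_m=\bigsqcup_b V_m\cdot b$ descends to level zero, and its preimage at infinite level lies in $\mathcal{U}_{\LT}$, so its quotient by $B(\Z_p)$ is affinoid perfectoid; the neighbourhood $U$ of $x$ is then taken to be $\pi_{\GH,0}(U_0)$, which shrinks your original $U'$. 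Without this finite-level separation argument (or a substitute for it), your proposal does not go through.
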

\begin{proof} Let $U'\subset \mathbb{P}^1$ be an affinoid neighbourhood of $x$ and $V \subset \cM^{(0)}_0$ an affinoid open such that the map ${\pi_{\GH,0}}|_{V}:V \rightarrow U'$ is an isomorphism. There exists a cover of $\cM^{(0)}_0$ by increasing open affinoid spaces $\spa(B_i,B_i^+)$ such that $p_0^{-1}(\spa(B_i,B_i^+))$ is affinoid perfectoid (cf.\ \cite[Section 2.10]{weinstein}). 
As $V$ is quasi-compact we can find $i \in I$ such that $V\subset \spa(B_i,B_i^+)$ and we can cover $V$ by open affinoids that are rational in $\spa(B_i,B_i^+)$ so we may assume $V \subset  \spa(B_i,B_i^+)$ is rational, in particular that $p_0^{-1}(V)$ is affinoid perfectoid.

Using the notation of Section 2.4, $p_0^{-1}(V) \subset Y\gamma^i$ for some $i$, therefore we can find a rational subset $W\subset Y\gamma^i$ that is contained in $p_0^{-1}(V)$ with $x \in (\pi_{\GH,0}\circ p_0)(W)$ and such that $W \in \mathcal{U}_{\LT}$. In particular there exists $m \geq 0$ and an affinoid open $W_m\subset \cM_{m}^{(0)}$ 
with $W = W_m \times_{\cM_{m}^{(0)}}\cM_{\infty}^{(0)}$. Let $x_m \in W_m$ be a lift of $x$ and let $H \subset B(\Z/p^m\Z)$ be the stabilizer of $x_m$. By Lemma \ref{nbhd} below there exists a rational subset $V_m \subset W_m$ with $x_m \in V_m$, $V_m \cdot H = V_m$ and $V_m \cdot b \cap V_m = \emptyset $ for all $1\neq b \in H\backslash B(\Z/p^m\Z)$. The affinoid 
\[ U_m:= \bigsqcup_{b \in H\backslash B(\Z/p^m\Z)} V_m \cdot b
\]
now comes from level zero: $U_m = p_{m,0}^{-1}(U_0)$ for $U_0:=p_{m,0}(U_m)$ and 
\[U:=\pi_{\GH,0}(U_0) 
\]
has the property that $\pi_{\GH,0}:U_0 \rightarrow U$ is an isomorphism, $x \in U$, $p_0^{-1}(U_0) \in \mathcal{U}_{\LT}$ and so
\[\overline{\pi}_{\GH}^{-1}(U) = p_0^{-1}(U_0)/B(\Z_p)
\]
is affinoid perfectoid.
\end{proof}
\begin{lem}\label{nbhd} Let $X$ be a separated rigid analytic space, $G$ finite group acting on $X$, and let $x \in X$ be a point of rank $1$. Let $H:=\mathrm{Stab}_G(x)$. Then there exists an open affinoid neighbourhood $U$ of $x$ such that
\begin{itemize}
	\item $H\cdot U = U$
	\item $ Ug \cap U = \emptyset $ for all $1 \neq g \in H\backslash G$. 
\end{itemize}
\end{lem}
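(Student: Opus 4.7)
The plan is to find a small open affinoid neighbourhood $V$ of $x$ whose translates $Vg_1, \ldots, Vg_n$ by a system of coset representatives $g_1 = 1, g_2, \ldots, g_n$ of $H\backslash G$ are pairwise disjoint, and then symmetrise over $H$ to enforce the invariance. The two background facts I would invoke are: (i) the orbit points $xg_1, \ldots, xg_n$ are pairwise distinct rank $1$ points by the definition of $H$, and rank $1$ points in a separated rigid space admit pairwise disjoint open affinoid neighbourhoods (morally because they correspond to points of the Hausdorff Berkovich space associated to $X$); and (ii) in a separated rigid space any finite intersection of open affinoid subspaces is itself open affinoid, since the diagonal $X \to X \times X$ is a closed immersion.

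First I would choose pairwise disjoint open affinoid neighbourhoods $V_i \ni xg_i$ for $i = 1, \ldots, n$ using (i). Since each $g_i$ acts as an automorphism of $X$, the subset $V_i g_i^{-1}$ is an open neighbourhood of $x$, so I can shrink $V_1$ by setting
\[
V := V_1 \cap \bigcap_{i=2}^n V_i g_i^{-1},
\]
which is still an open affinoid neighbourhood of $x$ by (ii). By construction $V g_i \subset V_i$, hence $V g_i \cap V \subset V_i \cap V_1 = \emptyset$ for every $i \geq 2$.

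Finally, to enforce $H$-stability I would set $U := \bigcap_{h \in H} V \cdot h$, which is again open affinoid by (ii). It is manifestly $H$-stable, and any $g \in G \setminus H$ can be written $g = h g_i$ with $h \in H$ and $i \geq 2$, giving $Ug = (Uh) g_i = U g_i \subset V g_i \subset V_i$, which is disjoint from $V_1 \supset V \supset U$. The main obstacle in this argument is purely technical: one needs rank $1$ points in a separated rigid space to be genuinely Hausdorff-separable by open affinoids, and one needs the affinoidness of intersections; both rely crucially on the separatedness hypothesis on $X$, without which the construction of $U$ would not stay in the affinoid world.
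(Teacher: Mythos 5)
Your proof is correct in structure and lands on the same construction as the paper: first manufacture an affinoid $V \ni x$ that is disjoint from its translates by coset representatives $g_i \notin H$, then symmetrise by setting $U := \bigcap_{h\in H}Vh$, using separatedness to keep everything affinoid. The one place where you and the paper diverge is in how that initial $V$ is produced. You outsource it to a separation statement, ``distinct rank $1$ points in a separated rigid space have disjoint affinoid neighbourhoods,'' which you justify only by gesturing at the Hausdorffness of the associated Berkovich space; you then separate all the orbit points $xg_i$ pairwise and intersect the resulting neighbourhoods (translated back by $g_i^{-1}$) to get $V$. The paper instead works directly with a single shrinking affinoid $V\ni x$: since $x$ has rank $1$, $\{x\}=\bigcap_{V\ni x}V$, so for $g\notin H$ one has $\bigcap_{V\ni x}(Vg\cap V)=\emptyset$; each $Vg\cap V$ is affinoid (separatedness) hence quasi-compact, and a finite-intersection-property argument then yields a single $V$ with $Vg\cap V=\emptyset$ for the finitely many relevant $g$. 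Note that this quasi-compactness argument is precisely the content hiding behind your Berkovich hand-wave: the Hausdorffness of the Berkovich space does not immediately give disjoint \emph{affinoid} opens (Berkovich affinoid domains are compact, not open), and making your step (i) rigorous would essentially reproduce the paper's FIP argument. So your route is sound but slightly more roundabout, and it buys nothing over the direct argument; the paper's version is the cleaner way to discharge the key fact you cite.
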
 
\begin{proof} As $x$ is of rank $1$, we have 
\[x = \bigcap_{x \in V} V \]
where the intersection runs over all affinoid opens $V$ containing $x$. Likewise $xg = \bigcap Vg$ for any $g \in G$.
If $g \in G$ is such that $x\cdot g \neq x$ then 
\[\bigcap (Vg \cap V) = \emptyset\]
but the $Vg \cap V$ are all quasicompact as they are affinoid as $X$ is separated. This implies that there exists an affinoid open $V\subset X$ containing $x$ and s.th.\ $Vg \cap V = \emptyset$ for all $g \in H\backslash G$. Fix such a $V$ and let $U:= \bigcap_{h \in H} Vh$. This is affinoid as $X$ is separated and has the properties we want. 
\end{proof}

\begin{defn} \label{lft}
\begin{enumerate}
	\item Let $(S,S^+) \rightarrow (R,R^+)$ be a morphism of Huber pairs. We say $(R,R^+)$ is of \emph{lightly finite type} over $(S,S^+)$ if $R^+\subset R$ can be generated by finitely many elements as an open and integrally closed $S^+$-subalgebra, i.e., if there exists a finite subset $E \subset R^+$ such that $R^+$ is the integral closure of $S^+[E \cup R^{\circ\circ}]$ in $R$.\footnote{This is similar to Huber's definition of ${^+}$weakly finite type, but he furthermore demands that the morphism is of topologically finite type.}
\item We say that a morphism $f:X\rightarrow Y$ of analytic adic spaces is \emph{locally of lightly finite type} if for any $x \in X$ there exist open affinoid subspaces $U, V$ of $X,Y$ such that $x \in U$, $f(U)\subset V$, and the induced morphism 
\[(\cO_Y(V), \cO^+_Y(V))\rightarrow (\cO_X(U), \cO^+_X(U))\]
is of lightly finite type.
\end{enumerate}
\end{defn}
\begin{rem} 
\begin{enumerate}
	\item It then follows that for any pair $U, V$ as in part (2) of the previous definition, the induced morphism $U \rightarrow V$ is locally of lightly finite type, as the property passes to rational subsets.  
\item Note furthermore, that any affinoid rigid space $X=\spa(R,R^\circ)$ over $\C_p$ is of lightly finite type.
\item The reason for introducing the notion ``of lightly finite type'' is the next lemma. In the proof there we arrive at the following situation: We are given a Huber ring $A$ and two rings $A^+ \subseteq (A^+)'$ of integral elements and we would like to know that natural morphism $\iota: \spa(A,(A^+)') \rightarrow \spa(A,A^+)$ is an open immersion. If $(A,A^+) \rightarrow (A, (A^+)')$ is of lightly finite type, then this is certainly true: $\iota$ is an isomorphism onto the rational subset of $\spa(A,A^+)$ defined by $|f_1|,\dots, |f_n| \leq 1$, where $\{f_1,\dots, f_n\}\subset (A^+)'$ generates $(A^+)'$ over $A$ in the sense of Definition \ref{lft}(1).
\end{enumerate}
	\end{rem}
	
\begin{lem} Assume  that $X$ is perfectoid and $X \rightarrow \spa(K,K^+)$ is partially proper and locally of lightly finite type. Then for any perfectoid $(K,K^+)$-algebra $(S,S^+)$ 
\[X(S,S^+) = X(S,S^\circ).\]
\end{lem}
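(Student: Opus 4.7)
The plan is to show that the natural map $X(S,S^+) \to X(S,S^\circ)$, induced by the inclusion $\iota \colon \spa(S,S^\circ) \hookrightarrow \spa(S,S^+)$, is a bijection; equivalently, every $f \colon \spa(S,S^\circ) \to X$ extends uniquely to $\tilde f \colon \spa(S,S^+) \to X$ with $\tilde f \circ \iota = f$. Uniqueness is straightforward: $\spa(S,S^\circ)$ and $\spa(S,S^+)$ have the same global sections $S$, so on any affinoid open of $X$ the ring homomorphism to $S$ associated to $\tilde f$ must agree with that of $f$, and the underlying topological map $|\tilde f|$ will be forced pointwise by the construction below.

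For existence, one first builds $|\tilde f|$ pointwise. Each $x \in \spa(S,S^+)$ admits a unique rank one generalisation $\tilde x$, which lies in $\spa(S,S^\circ)$ since rank one valuations are automatically $\leq 1$ on $S^\circ$. As $\mathrm{supp}(x) = \mathrm{supp}(\tilde x)$, one has $k(x) = k(\tilde x)$ with $k(\tilde x)^+ = \cO_{k(x)}$ and $k(x)^+ \subseteq \cO_{k(x)}$ a valuation subring. Composing with $f$ yields a morphism $\spa(k(x),\cO_{k(x)}) \to X$ which, by partial properness of $X \to \spa(K,K^+)$, extends uniquely to $\spa(k(x),k(x)^+) \to X$; define $|\tilde f|(x)$ to be the image of the closed point. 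Since $|\tilde f|(x)$ is a specialisation of $f(\tilde x)$ and open subsets of the locally spectral space $X$ are closed under generalisation, any affinoid neighbourhood of $|\tilde f|(x)$ in $X$ automatically contains $f(\tilde x)$ and the entire image of the field-valued map $\spa(k(x),k(x)^+) \to X$.

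To lift $|\tilde f|$ to a morphism of adic spaces, I argue locally on $X$. Around $|\tilde f|(x)$ choose an affinoid perfectoid $V = \spa(A,A^+) \subseteq X$ such that $(K,K^+) \to (A,A^+)$ is of lightly finite type, with generators $a_1,\dots,a_n \in A^+$ as in Definition \ref{lft}. After shrinking $V$ and passing to a rational subset of $\spa(S,S^\circ)$ mapping into $V$ (which is possible by the previous paragraph), $f$ corresponds locally to a continuous ring map $\varphi \colon A \to S$; automatically $\varphi(A^\circ) \subseteq S^\circ$, $\varphi(K^+) \subseteq S^+$, and $\varphi(A^{\circ\circ}) \subseteq S^+$, so only $\varphi(a_i) \in S^+$ remains to be checked. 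Letting $(S^+)'$ denote the integral closure in $S$ of $S^+[\varphi(a_1),\dots,\varphi(a_n)]$, the Huber pair morphism $(S,S^+) \to (S,(S^+)')$ is of lightly finite type, hence by the Remark preceding the lemma $\spa(S,(S^+)')$ is the rational subset $\{|\varphi(a_i)| \leq 1\} \subseteq \spa(S,S^+)$, and $\varphi$ defines a morphism of Huber pairs $(A,A^+) \to (S,(S^+)')$.

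The final step is to show this rational subset is all of $\spa(S,S^+)$. For each $x' \in \spa(S,S^+)$, repeating the pointwise construction of paragraph two produces a Huber-pair morphism $(A,A^+) \to (k(x'),k(x')^+)$; since $a_i \in A^+$ maps to $k(x')^+$, one obtains $v_{x'}(\varphi(a_i)) \leq 1$, so $(S^+)' = S^+$. Thus $\varphi$ promotes to $(A,A^+) \to (S,S^+)$ and defines $\tilde f$ locally; the local extensions glue via the pointwise uniqueness of $|\tilde f|$. The main obstacle is exactly the passage from the pointwise valuative criterion (partial properness at each field-valued point) to the integral condition $\varphi(A^+) \subseteq S^+$, and this is precisely what lightly finite type achieves: it reduces the infinite integral condition to the finite set of inequalities $|\varphi(a_i)| \leq 1$.
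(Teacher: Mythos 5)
Your overall strategy matches the paper's: build the extension $|\tilde f|$ pointwise via the valuative criterion (partial properness), and then use the lightly-finite-type hypothesis to reduce the integrality condition on the ring map to finitely many inequalities $|\varphi(a_i)| \leq 1$. The pointwise construction in your second paragraph and the identification of $\spa(S,(S^+)')$ as a rational subset of $\spa(S,S^+)$ are both correct and are exactly the key moves in the paper. The uniqueness argument is also fine.

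However, the final step contains a genuine error. You claim that $(S^+)' = S^+$, i.e.\ that $\varphi(a_i) \in S^+$ for all $i$, which would force $|\tilde f|^{-1}(V) = \spa(S,S^+)$ --- in other words, that the extended map factors entirely through the single affinoid $V$. This is simply false in general: $|\tilde f|(x')$ is a specialisation of $f(\tilde x')$, and even if $f(\tilde x')\in V$, that specialisation may leave $V$, since an affinoid open $V\subset X$ need not be closed under specialisation (and need not itself be partially proper over $\spa(K,K^+)$). Your justification --- that "repeating the pointwise construction of paragraph two produces a Huber-pair morphism $(A,A^+)\to (k(x'),k(x')^+)$" --- presupposes that the extension $\spa(k(x'),k(x')^+)\to X$ lands in $V$, which is exactly what you would need to prove and which fails precisely when $|\tilde f|(x')\notin V$. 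The correct statement, and what the paper actually proves, is only that $\spa(S,(S^+)')$ is an \emph{open} (rational) subset of $\spa(S,S^+)$ that coincides with $|\tilde f|^{-1}(V)$; it is in general a proper subset, and the lightly-finite-type hypothesis is what makes it open, so $|\tilde f|$ is continuous.

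There is a related imprecision earlier: you pass to a rational subset of $\spa(S,S^\circ)$, but such a subset need not be open in $\spa(S,S^+)$ (the inclusion $\spa(S,S^\circ)\subset\spa(S,S^+)$ is not open unless $(S,S^+)\to(S,S^\circ)$ happens to be of lightly finite type). So "passing to rational subsets of $Y^\circ$" does not by itself produce a cover of $Y=\spa(S,S^+)$ on which to construct $\tilde f$. Finally, once $|\tilde f|$ is known to be continuous, the paper finishes by producing the map on structure sheaves via $j_*\cO_{Y^\circ}\cong\cO_Y$ and then verifying compatibility with the valuations at stalks; this is worth spelling out rather than folding it into "$\varphi$ promotes and the local extensions glue".
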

\begin{proof}
Assume we have a morphism $f^\circ: Y^\circ:=\spa(S,S^\circ) \rightarrow X$ over $\spa(K,K^+)$. We extend it to a morphism 
$f: Y:=\spa(S,S^+)\rightarrow X$ over $\spa(K,K^+)$ such that the diagram 
\[\xymatrix{ & \spa(S,S^+) \ar[d]^f \\
\spa(S,S^\circ) \ar@{^{(}->}[ur]^j \ar[r]^{f^\circ} & X
}
\]
commutes. For that we define $f$ as a morphism in $\mathcal{V}$ as follows. We first define a map $|f|$ of underlying topological spaces. So let $y \in |\spa(S,S^+)|$ be a point. Then there is a unique morphism 
$p_y: \spa(k(y),k(y)^+) \rightarrow Y$ such that $y$ is the image of the unique closed point in $\spa(k(y),k(y)^+)$. Note that 
\[p_y(\spa(k(y), k(y)^\circ)) \subset Y^\circ,\]
 so we get a morphism
\[f^\circ \circ p_y|_{\spa(k(y),k(y)^\circ)}: \spa(k(y),k(y)^\circ) \rightarrow X.\]
As $X$ is partially proper over $\spa(K,K^+)$, this extends uniquely to a morphism
\[f_y:\spa(k(y),k(y)^+) \rightarrow X.\] 
Define $|f|(y)$ as the image of $f_y$ of the unique closed point in $\spa(k(y),k(y)^+)$.

To show that this defines a continuous map let $U:=\spa(R,R^+)\subset X$ be open affinoid. After possibly shrinking $U$ we may assume that the morphism $U\rightarrow \spa(K,K^+)$ is of lightly finite type. The preimage $(f^\circ)^{-1}(U)$ in $Y^\circ$ is open, so by passing to rational subsets we may assume WLOG that we have a diagram over $\spa(K,K^+)$
\[\xymatrix{
& Y  \ar[dr]^{|f|}& \\
Y^\circ \ar@{^{(}->}[ur]^j \ar[r]^{f^\circ} & U \ar@{^{(}->}[r] & X
}
\]
where $Y^\circ =  \spa(S,S^\circ)$ and all arrows except $|f|$ are maps of adic spaces. In particular, we have a morphism 
\[f^{\circ,\sharp}: (R,R^+)\rightarrow (S,S^\circ).\]
Define $(S^+)'$ as the integral closure of $S^+ \cup f^{\circ,\sharp}(R^+)$ in $S$. As $\spa(R,R^+) \rightarrow \spa(K,K^+)$ is of lightly finite type, there exists a finite set $E \subset R^+$ such that $R^+$ is the integral closure of $K^+[E\cup R^{\circ\circ}]$ in $R$. But now $(S^+)'$ agrees with the integral closure of $S^+ \cup f^{\circ,\sharp}(E)$ in $S$, in particular, $\spa(S,(S^+)') \subseteq \spa(S,S^+)=Y$ is open. By construction if $y \in Y$ such that $|f|(y) \in U$, then $y \in \spa(S,(S^+)')$, i.e., $|f|^{-1}(U)=\spa(S,(S^+)')$ is open in $Y$. 

Constructing the map of structure sheaves is now easy. Note that $j_* \mathcal{O}_{Y^\circ} = \cO_Y$ and $f^\circ$ gives a morphism $ \cO_X \rightarrow f^{\circ}_*\cO_{Y^\circ}$. But 
\[f^{\circ}_*\cO_{Y^\circ} \cong f_* j_* \cO_{Y^\circ} \cong f_*\cO_Y,\]
so we get a natural map
\[f^\sharp:\cO_X \rightarrow f_*\cO_Y.\]
Finally one checks that the morphism $(|f|,f^\sharp)$ is compatible with the valuations on the stalks, therefore indeed defines a morphism in $\mathcal{V}$.
\end{proof}

\begin{lem} Consider a cartesian diagram 
 \[\xymatrix{
X^\circ \ar[d]\ar[r] & X \ar[d]^f\\
\spa(K,K^\circ) \ar[r] & \spa(K,K^+) \\
}
\]
where $K$ is a non-archimedean field, $K^+\subset K$ is an open and bounded valuation subring, $X$ is a perfectoid space and $f$ is partially proper and locally of lightly finite type. Assume that $X^\circ= \spa(R,R^+)$ is affinoid perfectoid. Then $X$ is affinoid perfectoid.
\end{lem}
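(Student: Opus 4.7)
The plan is to realize $X$ explicitly as the adic spectrum of a Huber pair $(R, R^{++})$ for a suitable subring $R^{++} \subseteq R^+$. The key input is the previous lemma applied to $f \colon X \to \spa(K,K^+)$: for any affinoid perfectoid $(K,K^+)$-algebra $(S,S^+)$, it gives $X(S,S^+) = X(S,S^\circ)$, and since $\spa(S,S^\circ) \to \spa(K,K^+)$ factors through $\spa(K,K^\circ)$, the universal property of the fibre product in the cartesian diagram yields
\[
X(S,S^+) \;=\; X^\circ(S,S^\circ) \;=\; \Hom_{(K,K^\circ)}\!\bigl((R,R^+),(S,S^\circ)\bigr).
\]
Because $R^+ \subseteq R^\circ$ and any continuous $K$-algebra map $R \to S$ automatically sends $R^\circ$ into $S^\circ$, this functor simplifies to the set $\Hom_{K\text{-cts}}(R,S)$ of continuous $K$-algebra maps from $R$ to $S$.

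Next, I would define $R^{++}$ as the integral closure of $K^+[R^{\circ\circ}]$ in $R$. Since $K^+ \subseteq K^\circ \subseteq R^\circ$ and $R^{\circ\circ} \subseteq R^\circ$, with $R^\circ$ integrally closed in $R$, one obtains $R^{++} \subseteq R^\circ$, so $R^{++}$ is bounded; it contains the open subring $R^{\circ\circ}$, hence is open; it is integrally closed in $R$ by construction; and it contains $K^+$. Thus $(R,R^{++})$ is an affinoid perfectoid $(K,K^+)$-Huber pair, and I set $Z := \spa(R,R^{++})$. Computing its functor of points, any continuous $K$-algebra map $R \to S$ carries $K^+$ into $S^+$ (as both sides are $(K,K^+)$-algebras) and $R^{\circ\circ}$ into $S^{\circ\circ} \subseteq S^+$, and hence carries the integral closure $R^{++}$ into the integrally closed $S^+$. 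Therefore
\[
Z(S,S^+) \;=\; \Hom_{K\text{-cts}}(R,S) \;=\; X(S,S^+),
\]
naturally in $(S,S^+)$.

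Finally, the element $\mathrm{id}_R \in \Hom_{K\text{-cts}}(R,R) = X(R,R^{++})$ supplies a natural morphism $\psi \colon Z \to X$, which by construction induces the identity bijection on functors of points on every affinoid perfectoid $(K,K^+)$-test algebra. A Yoneda argument for perfectoid spaces in the spirit of Proposition \ref{unique} then upgrades $\psi$ to an isomorphism, so $X \cong Z$ is affinoid perfectoid. Base-changing this isomorphism along $\spa(K,K^\circ) \to \spa(K,K^+)$ recovers the cartesian diagram and forces the compatibility that $R^+$ equals the integral closure of $K^\circ + R^{\circ\circ}$ in $R$. The main obstacle is precisely this last step: converting the bijection of functors into a genuine isomorphism of adic spaces. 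To avoid invoking an abstract Yoneda principle, I would argue locally, covering $X$ by affinoid perfectoid opens $U$ whose intersections $U \cap X^\circ$ are rational subsets of $\spa(R,R^+) = X^\circ$; using partial properness and the lightly finite type hypothesis to recognise each $U$ as a rational subset of $Z$ cut out by the same defining functions, and then gluing these identifications into a global isomorphism $X \cong Z$.
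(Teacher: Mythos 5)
Your proof is correct and follows essentially the same route as the paper. You define the candidate Huber pair $(R, R^{++})$ with $R^{++}$ the integral closure of $K^+[R^{\circ\circ}]$ in $R$; since $R^{\circ\circ}$ is an ideal of $R^\circ$, this is the same as the paper's $(R^+)'$, the integral closure of $K^+ + R^{\circ\circ}$ in $R$. You then match the functor of points on affinoid perfectoid $(K,K^+)$-algebras via the chain $X(S,S^+) = X(S,S^\circ) = X^\circ(S,S^\circ) = \Hom_{\cont}(R,S)$ exactly as the paper does, and observe that $\spa(R,R^{++})(S,S^+)$ also computes to $\Hom_{\cont}(R,S)$ because continuous $K$-algebra maps automatically carry $K^+$, $R^{\circ\circ}$, and hence $R^{++}$ into $S^+$. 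Your hesitation about the final Yoneda step is unnecessary: perfectoid spaces over $\spa(K,K^+)$ embed fully faithfully into presheaves on affinoid perfectoid $(K,K^+)$-algebras (this is the content of Proposition \ref{unique} and standard sheaf-theoretic gluing), so a natural bijection of functors of points on these test objects does yield an isomorphism of perfectoid spaces; the fallback local-gluing argument you sketch is not needed.
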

\begin{proof} We claim that $X = \spa(R,{(R^+)}')$, where $(R^+)'$ is the integral closure of $K^+ + R^{\circ \circ}$ in $R$. To see this let $(S,S^+)$ be a perfectoid $(K,K^+)$-algebra. Then by the previous lemma
\[X(S,S^+) = X(S,S^\circ) = X^\circ(S,S^\circ) = \mathrm{Hom}_{\cont}(R,S).\]
On the other hand 
\[\left(\spa(R,{(R^+)}')\right)(S,S^+) = \mathrm{Hom}_{\cont}(R,S,{(R^+)}' \rightarrow S^+)\]
is given by continuous $K$-algebra homomorphisms $R\rightarrow S$ that send ${(R^+)}'$ into $S^+$. But any continuous $K$-algebra homomorphism $R \rightarrow S$ has this property as the image of the topologically nilpotent elements $R^{\circ \circ}$ lands in $S^{\circ \circ} \subset S^+$, $K^+$ is mapped to $S^+$ and $S^+$ is integrally closed. 
\end{proof}

The map $\cM_\infty/B(\Q_p) \rightarrow \spa(\C_p, \mathcal{O}_{\C_p})$ is locally of lightly finite type as locally $\cM_\infty/B(\Q_p)$ is pro-finite \'etale over a rigid space. 
Therefore $\overline{\pi}_{\GH}$ is locally of lightly finite type as well.
We finish this section studying the fibres of $\overline{\pi}_{\GH}$.

Let $X$ be a perfectoid space, $Y$ an analytic adic space, $f:X\rightarrow Y$ a morphism and $y=\spa(K,K^+) \in Y$ be a point. Define the fibre $f^{-1}(y)$ as the space
\[f^{-1}(y) := \bigcap_{\stackrel{U \text{q.c.\ open}:}{ |U| \supset |f|^{-1}(y)}} U\]
This is naturally a perfectoid space. One can check this locally on $Y$ and reduce to the situation where $X$ is affinoid perfectoid and $Y$ is affinoid Tate. In this situation the fibre $f^{-1}(y)$ is affinoid perfectoid as whenever we have a direct limit $\varinjlim R_i$ of perfectoid algebras $(R_i, R_i^+)$ the uniform completion $R:= (\varinjlim R_i)^\wedge$ is again perfectoid.

Now let $x =\spa(K,K^+)$ be a point of $\mathbb{P}^1_{\C_p}$, and consider the fibre $\overline{\pi}_{\GH}^{-1}(x)$, 
\[\overline{\pi}_{\GH}^{-1}(x) = \bigcap_{\stackrel{U \text{q.c.\ open}:}{ |U| \supset |\overline{\pi}_{\GH}|^{-1}(x)}} U.\]
Note that we have an equality of uniform adic spaces 
\[x = \bigcap_{\stackrel{V \text{q.c.\ open}:}{ V \ni x}} V\]
and therefore the diagram
\[\xymatrix{
\overline{\pi}_{\GH}^{-1}(x) \ar[d]^h\ar[r]^f & \cM_\infty/B(\Q_p) \ar[d]\\
x=\spa(K,K^+) \ar[r] & \mathbb{P}^1_{\C_p} \\
}
\]
is cartesian in the category of uniform adic spaces. If $U=\spa(S,S^+)= \overline{\pi}_{\GH}^{-1}(V) \subset \cM_{\infty}/B(\Q_p)$ for an affinoid open $V=\spa(R,R^+) \subset \mathbb{P}^1_{\C_p}$, then $f^{-1}(U) = \spa(T,T^+)$ where $T$ is the uniform completion of $S\widehat{\otimes}_R K$ and $T^+$ is generated by $S^+ \cup K^+$. Therefore $h$ is locally of lightly finite type (as $\overline{\pi}_{\GH}$ is).  

We can now show that the fibres of $\overline{\pi}_{\GH}$ are all affinoid perfectoid.
\begin{prop}\label{affinoidfibres}
Let $x =\spa(K,K^+)$ be a point of $\mathbb{P}^1_{\C_p}$, then the fibre
\[\overline{\pi}_{\GH}^{-1}(x)= \bigcap_{\stackrel{U \text{q.c.\ open}:}{ |U| \supset |\overline{\pi}_{\GH}|^{-1}(x)}} U\]
is affinoid perfectoid. 
\end{prop}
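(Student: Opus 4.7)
The plan is to reduce the general statement to the rank one case already handled in Proposition~\ref{rk1} and to transfer the conclusion to $x$ via the base change lemma proved just above.

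First I would produce a rank one generalization $x^\circ$ of $x$ inside $\mathbb{P}^1_{\C_p}$. Since $\mathbb{P}^1_{\C_p}$ is partially proper over $\spa(\C_p,\cO_{\C_p})$, the morphism $x=\spa(K,K^+)\to \mathbb{P}^1_{\C_p}$ extends uniquely to a morphism $x^\circ:=\spa(K,K^\circ)\to \mathbb{P}^1_{\C_p}$. Proposition~\ref{rk1} then supplies an affinoid open neighbourhood $U\subset \mathbb{P}^1_{\C_p}$ of $x^\circ$ such that $\overline{\pi}_{\GH}^{-1}(U)$ is affinoid perfectoid. The restricted map $\overline{\pi}_{\GH}^{-1}(U)\to U$ is then a morphism from an affinoid perfectoid space to an affinoid Tate space, so the general fact recalled just before the statement---that the uniform completion of a filtered colimit of perfectoid algebras is again perfectoid---shows that the fibre $\overline{\pi}_{\GH}^{-1}(x^\circ)$ is affinoid perfectoid.

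Next I would set up the cartesian diagram
\[
\xymatrix{
\overline{\pi}_{\GH}^{-1}(x^\circ) \ar[d]\ar[r] & \overline{\pi}_{\GH}^{-1}(x) \ar[d]^h \\
\spa(K,K^\circ) \ar[r] & \spa(K,K^+)
}
\]
in the category of uniform adic spaces, which arises from the cartesianness of $\overline{\pi}_{\GH}^{-1}(x)=x\times_{\mathbb{P}^1_{\C_p}}\cM_\infty/B(\Q_p)$ via the pasting lemma. The right-hand vertical $h$ is partially proper: both $\cM_\infty/B(\Q_p)$ and $\mathbb{P}^1_{\C_p}$ are partially proper over $\spa(\C_p,\cO_{\C_p})$ (by Proposition~\ref{univspec} and the remark after Proposition~\ref{qc}), so $\overline{\pi}_{\GH}$ is partially proper as a morphism, and this property is stable under base change. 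Moreover $h$ is locally of lightly finite type, as already noted in the text immediately preceding the statement.

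The preceding lemma now applies verbatim to this diagram and yields that $\overline{\pi}_{\GH}^{-1}(x)$ is affinoid perfectoid, completing the proof. The real substance lies in Proposition~\ref{rk1}; once that is available, everything here is a formal application of the base change lemma. The only point requiring a bit of care is to confirm that the square above is genuinely cartesian in the category of uniform adic spaces in which the lemma is stated, but this follows directly from the construction of the fibre recalled just before the statement, so I do not anticipate any genuine obstacle.
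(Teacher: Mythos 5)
Your proof is correct and follows essentially the same two-step route as the paper: the rank one case via Proposition~\ref{rk1}, and the general case via the cartesian square and the base change lemma for partially proper, locally of lightly finite type morphisms. One small correction: you do not obtain $x^\circ = \spa(K,K^\circ)$ from $x$ by invoking partial properness of $\mathbb{P}^1_{\C_p}$ (the valuative criterion produces extensions in the \emph{opposite} direction, from $\spa(K,K^\circ)$-points to $\spa(K,K^+)$-points); rather, $K^+\subseteq K^\circ$ gives a canonical map of Huber pairs and hence a morphism $x^\circ\to x$, so the composite $x^\circ \to x \to \mathbb{P}^1_{\C_p}$ already exists and this is the map you want. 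With that adjustment the rest of your argument, including the verification that $h$ is partially proper and locally of lightly finite type, is sound and matches the paper's intent.
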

\begin{proof} For points of rank one this follows from Proposition \ref{rk1}. For points $x =\spa(K,K^+)$ of higher rank the claim follows from the previous lemma and the fact that 
\[\xymatrix{
\overline{\pi}_{\GH}^{-1}(x_0) \ar[d]\ar[r] & \overline{\pi}_{\GH}^{-1}(x) \ar[d]^h\\
x_0=\spa(K,K^\circ) \ar[r] & x \\
}
\]  
is cartesian. 
\end{proof}

\section{Cohomological Consequences}
Let $F/\Q_p$ be a finite extension, $n\geq 1 $ an integer and let $D$ be the division algebra with center $F$ and invariant $1/n$. In \cite{scholzeLT}, Scholze constructs a functor 
\begin{eqnarray*}
\left\{\begin{aligned} &\text{smooth admissible}\\ & \mathbb{F}_p\text{-representations } \\ & \text{of} \GL_n(F)\end{aligned} \right\}
 &\longrightarrow& \left\{D^*\text{-equivariant sheaves on } (\mathbb{P}^{n-1})_{\et} \right\}\\
\pi &\longmapsto &\cF_\pi
\end{eqnarray*}
and shows that the cohomology groups 
\[ H^i_{\et}(\mathbb{P}^{n-1}_{\C_p},\mathcal{F}_\pi)\] 
come equipped with an action of $\Gal(\overline{F}/F)$, are admissible $D^*$-representations and vanish for all $i > 2(n-1)$.
In particular this provides evidence for the existence of a mod $p$ local Langlands correspondence and a mod $p$ Jacquet--Langlands correspondence.

Here, we specialize to the case of $F=\Q_p$ and $n=2$ and we abbreviate
\[\cS^i(\pi):=H^i_{\et}(\mathbb{P}^1_{\C_p},\mathcal{F}_\pi).\]

Let $q=p^k$, for some $k \geq 1$. For a pair $\chi=(\chi_1, \chi_2)$ of characters $\Q_p^* \rightarrow \mathbb{F}^*_q$ we write $\Ind(\chi)$ or $\Ind(\chi_1,\chi_2)$ for the smooth parabolic induction $\Ind_{B(\Q_p)}^{\GL_2(\Q_p)}(\chi)$, so
\[\Ind_{B(\Q_p)}^{\GL_2(\Q_p)}(\chi):= \left\{f:\GL_2(\Q_p) \rightarrow {\mathbb{F}}_q	 | f \text{ cont.\ and } f(bx)=\chi(b)f(x) \ \forall b \in B(\Q_p)\right\}
.\]
This smooth admissible representation is irreducible unless $\chi_1=\chi_2$. When $\chi_1=\chi_2$ we have a short exact sequence 
\[0 \rightarrow \chi_1 \rightarrow \Ind(\chi_1,\chi_1)\rightarrow \mathrm{St}\otimes \chi_1 \rightarrow 0\]
where $\mathrm{St}$ denotes the Steinberg representation, which is irreducible.  
\smallskip\\

In this section we show that for $\pi \cong \Ind(\chi_1,\chi_2)$ with $\chi_1\neq \chi_2$ and for $\pi\cong \mathrm{St}\otimes \mu$ any twist of the Steinberg representation the cohomology $\cS^i(\pi)$ is concentrated in degree one. The vanishing of $\cS^0(\pi)$ is easy. 

\begin{prop} 
\begin{enumerate}
	\item Let $\chi_i: \Q_p^* \rightarrow {\mathbb{F}}^*_q, i=1,2$ be two distinct smooth characters. Then 
 \[\cS^0(\Ind(\chi_1,\chi_2)) = 0.\]
\item Let $\pi:= \mathrm{St}\otimes \mu$ be a twist of the Steinberg representation by a character $\mu: \Q_p^* \rightarrow {\mathbb{F}}^*_q$. Then 
\[\cS^0(\mathrm{St}\otimes \mu) = 0.\]
\end{enumerate}
\end{prop}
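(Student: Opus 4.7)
The plan is to identify $\cS^0(\pi)$ with $\pi^{\SL_2(\Q_p)}$ for any admissible smooth $\mathbb{F}_q$-representation $\pi$, and then verify that these invariants vanish in the two cases of interest.

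Starting from $\cF_\pi=(\pi_{\GH,*}\underline{\pi})^{\GL_2(\Q_p)}$ and using that $\pi$ carries the discrete topology, one has
\[\cS^0(\pi)=\Map_{\cont}^{\GL_2(\Q_p)}(|\cM_\infty|,\pi)=\Map^{\GL_2(\Q_p)}(\pi_0(|\cM_\infty|),\pi),\]
the second equality because a locally constant map to a discrete set factors uniquely through $\pi_0$. Next I would compute $\pi_0(|\cM_\infty|)$ as a $\GL_2(\Q_p)$-set: the rigid space $\cM^{(0)}_0$ is an open unit disc and in particular connected; the Weil pairing of the universal level structure assembles to a $G'$-equivariant continuous map $|\cM^{(0)}_\infty|\to \Z_p^*$ whose fibres, being the pro-\'etale limit of the $\SL_2$-cover of the disc, are connected; and $\GL_2(\Q_p)$ permutes the components $\{\cM^{(i)}_\infty\}_{i\in\Z}$ via the valuation of the determinant. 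This produces an isomorphism of $\GL_2(\Q_p)$-sets
\[\pi_0(|\cM_\infty|)\cong \Q_p^*,\]
on which $g$ acts by multiplication by $\det(g)$. Evaluating at the base point $1\in\Q_p^*$, whose stabiliser is $\SL_2(\Q_p)$, yields $\cS^0(\pi)\cong \pi^{\SL_2(\Q_p)}$.

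For part (1), $\pi=\Ind(\chi_1,\chi_2)$: using $\GL_2(\Q_p)=B(\Q_p)\cdot\SL_2(\Q_p)$, any $\SL_2(\Q_p)$-invariant $f\in\pi$ is determined by $f(e)$, and the compatibility $\chi(b)f(e)=f(b)=f(e)$ for $b\in B(\Q_p)\cap\SL_2(\Q_p)$ forces $(\chi_1\chi_2^{-1})(a)=1$ for all $a\in \Q_p^*$, i.e.\ $\chi_1=\chi_2$, contradicting the hypothesis. For part (2), $\pi=\mathrm{St}\otimes\mu$: since $\mu|_{\SL_2(\Q_p)}$ is trivial the claim reduces to $\mathrm{St}^{\SL_2(\Q_p)}=0$; taking $\SL_2(\Q_p)$-invariants of $0\to \mathbb{F}_q\to\Ind(1,1)\to\mathrm{St}\to 0$ and noting that $\Ind(1,1)^{\SL_2(\Q_p)}=\mathbb{F}_q$ is exactly the image of the trivial sub, one obtains an embedding
\[\mathrm{St}^{\SL_2(\Q_p)}\hookrightarrow \Hom_{\cont}(\SL_2(\Q_p),\mathbb{F}_q),\]
whose target vanishes because $\SL_2(\Q_p)$ is generated by its divisible unipotent subgroups $U^{\pm}(\Q_p)\cong(\Q_p,+)$, on which any continuous homomorphism to the $p$-torsion group $\mathbb{F}_q$ must be zero.

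The main obstacle in this plan is the rigorous identification of $\pi_0(|\cM_\infty|)$ with $\Q_p^*$ as a $\GL_2(\Q_p)$-set: one must check that the determinant-of-level-structure map has geometrically connected fibres at each finite level, so that no spurious components appear upon passing to the pro-\'etale limit. Once this geometric input is established, the representation-theoretic vanishing is elementary.
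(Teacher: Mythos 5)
Your strategy is essentially the same as the paper's in its crucial reduction: everything hinges on the representation--theoretic fact that $\pi^{\SL_2(\Q_p)}=0$ for $\pi=\Ind(\chi_1,\chi_2)$ with $\chi_1\neq\chi_2$ and for $\pi=\mathrm{St}\otimes\mu$. The paper states this without proof and then simply invokes \cite[Prop.~4.7]{scholzeLT}, which asserts that $H^0_{\et}(\mathbb{P}^1_{\C_p},\cF_{V^{\SL_2(\Q_p)}})\to H^0_{\et}(\mathbb{P}^1_{\C_p},\cF_V)$ is an isomorphism for any admissible $V$, so that $V^{\SL_2(\Q_p)}=0$ forces $\cS^0(V)=0$. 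You instead attempt to re-derive (in fact slightly sharpen, to $\cS^0(\pi)\cong\pi^{\SL_2(\Q_p)}$) that black box by unwinding the definition of $\cF_\pi$ and computing $\pi_0(|\cM_\infty|)$. This is conceptually the content of Scholze's proposition, and your representation-theoretic verifications of $\Ind(\chi_1,\chi_2)^{\SL_2(\Q_p)}=0$ (via $\GL_2=B\cdot\SL_2$ and triviality of $\chi$ on $B\cap\SL_2$) and of $\mathrm{St}^{\SL_2(\Q_p)}=0$ (via $H^1_{\cont}(\SL_2(\Q_p),\mathbb{F}_q)=\Hom_{\cont}(\SL_2(\Q_p),\mathbb{F}_q)=0$ from divisibility of the unipotents) are both correct and fill in what the paper leaves to the reader.

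The gap you correctly identify is also the one a referee would press on: the statement $\pi_0(|\cM_\infty|)\cong\Q_p^*$ (with $\GL_2(\Q_p)$ acting through $\det$, and the fibres of the determinant-of-Weil-pairing map geometrically connected) is a genuine theorem about the Lubin--Tate tower (Strauch, de~Jong, Weinstein) and not something one can wave through; you should cite it rather than assert it, or simply cite \cite[Prop.~4.7]{scholzeLT} as the paper does and thereby inherit it. Two cosmetic points: $\pi_0(|\cM_\infty|)\cong\Q_p^*$ carries a non-discrete (locally profinite) quotient topology, so you want $\Map_{\cont}^{\GL_2(\Q_p)}(\pi_0(|\cM_\infty|),\pi)$, not $\Map^{\GL_2(\Q_p)}$; this costs nothing because smoothness of $\pi$ makes the orbit map of any $\SL_2(\Q_p)$-fixed vector automatically locally constant, but the continuity condition must be tracked. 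And the passage from $\Map_{\cont}(|\cM_\infty|,\pi)$ to maps out of $\pi_0$ uses that $X\to\pi_0(X)$ is a topological quotient and that a continuous map to a discrete set is constant on connected components --- fine here, but worth saying since $|\cM_\infty|$ is only locally spectral and not quasi-compact.
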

\begin{proof} Note that $\mathrm{Ind}(\chi_1,\chi_2)^{\SL_2(\Q_p)}=0$ as well as $(\mathrm{St}\otimes \mu)^{\SL_2(\Q_p)}=0$. The claim now follows from \cite[Prop.\ 4.7]{scholzeLT}, where it is proved in particular that for any admissible $\mathbb{F}_p[\GL_2(\Q_p)]$-module $V$ the natural map
\[H^0_{\et}(\mathbb{P}^1_{\C_p}, \mathcal{F}_{V^{\SL_2(\Q_p)}})\hookrightarrow H^0_{\et}(\mathbb{P}^1_{\C_p}, \mathcal{F}_{V})\]
is an isomorphism. 
\end{proof}

To show vanishing of cohomology in degree two we use the quotient constructed above. Again let $\chi_i: \Q_p^* \rightarrow \mathbb{F}_q^*, i=1,2$, be two smooth characters.
Let $\cF_{\chi}$ be the sheaf on $(\cM_{\infty}/B(\Q_p))_{\et}$ defined as
\[\cF_{\chi}(U) = \Map_{\cont,B(\Q_p)}(|U\times_{\cM_{\infty}/B(\Q_p)} \cM_{\infty}|, \chi)\]
for any \'etale $U\rightarrow \cM_{\infty}/B(\Q_p)$. Here one turns the right action of $B(\Q_p)$ on $\cM_\infty$ into a left action and then the subscript $B(\Q_p)$ denotes the set of all maps which are equivariant for this action. 

\begin{prop} The sheaf $\cF_{\chi}$ on $(\cM_{\infty}/B(\Q_p))_{\et}$ is a $\mathbb{F}_q$-local system of rank one.
\end{prop}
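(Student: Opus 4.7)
The plan is to use that $\chi$, being continuous with target the finite group $\mathbb{F}_q^*$, factors through a finite quotient, so $\cF_\chi$ turns out to be pulled back from a genuine finite-level local system, making the rank-one local system property transparent. The key input is the local structure of $\cM_\infty/B(\Q_p)$ provided by Theorem \ref{cover} and Theorem \ref{secmainthm}, which reduces the check to a computation at some finite level $m$.

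Concretely, I would choose $m$ large enough that $\chi|_{B(\Z_p)}$ factors through a character $\bar\chi_m: B_m := B(\Z/p^m\Z) \to \mathbb{F}_q^*$, and set $H_m := \ker \bar\chi_m$, so that $B_m/H_m \hookrightarrow \mathbb{F}_q^*$ is finite cyclic of order prime to $p$. At finite level, $\cM^{(0)}_m \to \cM^{(0)}_m/B_m$ is finite \'etale Galois with Galois group $B_m$, so the intermediate cover $\cM^{(0)}_m/H_m \to \cM^{(0)}_m/B_m$ is finite \'etale Galois with Galois group $B_m/H_m$, and descending $\underline{\mathbb{F}_q}$ along it via $\bar\chi_m$ yields a rank-one $\mathbb{F}_q$-local system $\mathcal{L}_m$ on $\cM^{(0)}_m/B_m$. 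I would then use the cover $\mathcal{U}_{\LT}=(U^j)_{j\in J}$ from Theorem \ref{cover} together with Theorem \ref{secmainthm}: each image $V^j := \varphi(U^j)$ (and its $B'$-translates) gives an affinoid open cover of $\cM_\infty/B(\Q_p) \cong \cM^{(0)}_\infty/B'$, with a natural map $V^j \to U^j_m/B_m \subset \cM^{(0)}_m/B_m$ coming from $U^j \approx \varprojlim U^j_m$.

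The claim is then that $\cF_\chi|_{V^j}$ is the pullback of $\mathcal{L}_m$ along this map, from which the rank-one local system property follows at once (and the pieces glue by functoriality of the descent). To verify the claim, I would unwind the definition: for an \'etale $W \to V^j$, sections of $\cF_\chi$ over $W$ are continuous $B(\Q_p)$-equivariant maps from $|W \times_{\cM_\infty/B(\Q_p)} \cM_\infty|$ to $\chi$. Using that $\chi|_{B(\Z_p)}$ factors through $B_m$, that the relevant preimage in $\cM^{(0)}_\infty$ is $p_m^{-1}$ of the corresponding piece in $\cM^{(0)}_m$ (since $U^j \approx \varprojlim U^j_m$), and that pulling back further to $U^j_m/H_m$ kills the $\chi$-twist on the stabilizer (which is contained in $H_m$ by construction), such equivariant maps are equivalent to locally constant $\mathbb{F}_q$-valued functions on $|W|$, matching the formula for the pullback of $\mathcal{L}_m$.

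The main obstacle will be the bookkeeping at this comparison step: one must carefully track how $B(\Q_p)$-equivariance reduces to $B_m$-equivariance after passing to the preimage and to a finite \'etale cover. This involves the decomposition $\cM_\infty = \bigsqcup_i \cM_\infty^{(i)}$ and the local identification $V^j \cong (p_0^{-1}(W) \times^{B(\Z_p)} B')/B'$ from the proof of Theorem \ref{secmainthm}, so that on the component $\cM^{(0)}_\infty$ a $B(\Q_p)$-equivariant map restricts to a $B(\Z_p)$-equivariant one and the remaining $B'/B(\Z_p)$- and $B(\Q_p)/B'$-translates contribute no new constraints once the $B(\Z_p)$-piece is handled.
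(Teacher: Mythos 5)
The paper's proof is considerably shorter: it fixes an open subgroup $K \subset B(\Z_p)$ with $\chi|_K = 1$, observes that $f:\cM_\infty/K \to \cM_\infty/B(\Q_p)$ is an \'etale covering, and notes that $f^*\cF_\chi$ is the constant sheaf $\underline{\mathbb{F}_q}$, directly from the definition of $\cF_\chi$. Your route is genuinely different in mechanism even though the enabling fact (smoothness of $\chi$ yields a finite-index normal open $K\subset B(\Z_p)$ through which $\chi|_{B(\Z_p)}$ factors) is the same: instead of pulling back along a single infinite \'etale cover at infinite level, you descend from finite level, constructing an explicit rank-one local system $\mathcal{L}_m$ on $\cM^{(0)}_m/B_m$ and comparing it to $\cF_\chi$ over the affinoid pieces furnished by Theorems~\ref{cover} and~\ref{secmainthm}. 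What this buys you is a stronger structural statement --- that $\cF_\chi$ is locally pulled back from a genuine finite-level local system --- which could be useful elsewhere; what it costs is exactly the bookkeeping you flag, most of which the paper's one-line argument avoids because $\cM_\infty/K$ already absorbs the $B(\Q_p)$-versus-$B(\Z_p)$ discrepancy.

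One caveat on the details: the affinoid opens you want to use are not arbitrary $\varphi(U^j)$ with $U^j\in\mathcal{U}_{\LT}$, but specifically the opens $p_0^{-1}(U)/B(\Z_p)$ coming from local sections of $\pi_{\GH,0}$ as in the proof of Theorem~\ref{secmainthm}; taking ``$B'$-translates of $V^j$'' inside $\cM^{(0)}_\infty/B(\Z_p)$ and pushing to $\cM^{(0)}_\infty/B'$ does not literally produce a cover by affinoids, since $\cM^{(0)}_\infty/B(\Z_p) \to \cM^{(0)}_\infty/B'$ is a surjection with infinite discrete fibres rather than an inclusion. With the correct cover in hand, the rest of your comparison --- reducing $B(\Q_p)$-equivariance first to $B(\Z_p)$-equivariance on one component and then to $B_m$-equivariance after passing through $p_m$ --- is sound, so the proposal works modulo that fix and the deferred verification.
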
 
\begin{proof}
The characters $\chi_i$ are smooth, so there exists an open subgroup $K\subset B(\Q_p)$ such that ${\chi_1}|_K= {\chi_2}|_K = 1 $ is trivial. 
Fix such $K \subset B(\Z_p)$. Then 
\[ f:\cM_{\infty}/K \rightarrow \cM_{\infty}/B(\Q_p)
\] 
is an \'etale covering and $f^*\cF_{\chi}$ is the constant sheaf $\underline{\mathbb{F}_q}$. 
\end{proof}

\begin{lem} Let $X$ be a topological space with a left action of a locally profinite group $G$ such that $G\times X \rightarrow X$ is continuous. Let $B\subseteq G$ be a closed subgroup such that $G/B$ is compact. Let $\chi: B\rightarrow \mathbb{F}^*_q$ be a smooth character and let $\Ind_B^G(\chi)$ be the smooth induction of $\chi$ from $B$ to $G$.  Then
\[\Map_{\cont, G}(X, \Ind_B^G(\chi)) = \Map_{\cont,B}(X,\chi). \]
\end{lem}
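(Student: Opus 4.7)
The plan is to prove this as a form of Frobenius reciprocity by writing down explicit inverse maps and verifying that continuity is preserved in both directions, with the backward direction containing essentially all the content and using the compactness hypothesis on $G/B$.

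Define
\[\alpha: \Map_{\cont,G}(X, \Ind_B^G(\chi)) \longrightarrow \Map_{\cont,B}(X,\chi), \qquad \alpha(\Phi)(x) := \Phi(x)(1_G),\]
and
\[\beta: \Map_{\cont,B}(X,\chi) \longrightarrow \Map_{\cont,G}(X, \Ind_B^G(\chi)), \qquad \beta(\phi)(x)(g) := \phi(gx).\]
The verifications that $\alpha \circ \beta$ and $\beta \circ \alpha$ are the respective identities, that $\alpha(\Phi)$ is $B$-equivariant (using $G$-equivariance of $\Phi$ and the defining identity $F(bh) = \chi(b)F(h)$ for $F \in \Ind_B^G(\chi)$), and that $\beta(\phi)$ is $G$-equivariant (from the computation $\beta(\phi)(gx)(h) = \phi((hg)x) = \beta(\phi)(x)(hg)$) are all direct substitutions and contain no substantive content.

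Continuity of $\alpha(\Phi)$ is immediate since it is the composition of $\Phi$ with evaluation at $1_G$. For $\beta$ I would first check that for each fixed $x$ the function $g \mapsto \phi(gx)$ really lies in $\Ind_B^G(\chi)$: continuity in $g$ follows from continuity of $\phi$ and of the action map $G \times X \to X$, while local constancy under right translation by a small open subgroup of $G$ is read off from smoothness of $\chi$ together with local constancy of $\phi$ via the $B$-equivariance relation.

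The main obstacle --- the only step where compactness of $G/B$ enters --- is continuity of $\beta(\phi) : X \to \Ind_B^G(\chi)$. Giving $\Ind_B^G(\chi)$ the natural discrete topology, this amounts to local constancy. Fix $x \in X$. For each $g \in G$, joint continuity of the action together with local constancy of $\phi$ near $gx$ produces open neighborhoods $V_g \ni g$ in $G$ and $U_g \ni x$ in $X$ with $\phi(g'y) = \phi(gx)$ for all $(g',y) \in V_g \times U_g$. The $B$-equivariance of $\phi$ means that the condition ``$\phi(gy) = \phi(gx)$'' depends only on the class of $g$ in $B\backslash G$, which is compact (homeomorphic to $G/B$ via inversion); so the images of the $V_g$ admit a finite subcover $V_{g_1}, \ldots, V_{g_n}$. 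Setting $U := \bigcap_i U_{g_i}$, for any $y \in U$ and any $g = bg'$ with $g' \in V_{g_i}$ one computes
\[\phi(gy) = \chi(b)\phi(g'y) = \chi(b)\phi(g_i x) = \chi(b)\phi(g'x) = \phi(gx),\]
so $\beta(\phi)(y) = \beta(\phi)(x)$ throughout $U$, as required.
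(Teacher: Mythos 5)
Your maps $\alpha$ and $\beta$ are exactly the maps $S$ and $T$ that the paper writes down, and the paper then declares the remaining verifications elementary and leaves them to the reader. Your proposal is correct and follows the paper's approach, filling in the details the paper omits; in particular your compactness argument for local constancy of $\beta(\phi)$ correctly isolates the one place where the hypothesis that $G/B$ is compact is needed.
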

\begin{proof} This is elementary, we just give the maps and leave it to the reader to check that they are well-defined and inverse to each other.
Define 
\begin{eqnarray*}
S:\Map_{\cont, G}(X, \Ind_B^G(\chi)) &\rightarrow & \Map_{\cont,B}(X,\chi),\\
 \phi &\mapsto& S(\phi): x \mapsto \phi(x)(1),
\end{eqnarray*}
and 
\begin{eqnarray*}
T: \Map_{\cont,B}(X,\chi) &\rightarrow & \Map_{\cont, G}(X, \Ind_B^G(\chi)),\\ 
\psi &\mapsto & T(\psi)(x): g\mapsto \psi(gx). 
\end{eqnarray*}
\end{proof}
Recall the sheaf $\cF_{\pi}$ is defined as 
\[\cF_{\pi}(U) =  \Map_{\cont,\GL_2(\Q_p)}(|U\times_{\mathbb{P}^1_{\C_p}} \cM_{\infty}|, \pi)\]
for an \'etale map $U\rightarrow \mathbb{P}^1_{\C_p}$. From the above lemma we see that $\overline{\pi}_{\GH,*}\cF_{\chi}=\cF_{\pi}$. 

\begin{prop}\label{directimage} Let $\cF$ be a sheaf on $(\cM_{\infty}/B(\Q_p))_{\et}$. Then
\[H^i_{\et}(\cM_{\infty}/B(\Q_p),\cF)\cong H^i_{\et}(\mathbb{P}_{\C_p}^{1}, \overline{\pi}_{\GH,*} \cF)\]
for all $i\geq 0$.
\end{prop}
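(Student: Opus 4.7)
The plan is to derive Proposition \ref{directimage} from the vanishing of the higher direct images $R^i\overline{\pi}_{\GH,*}\mathcal{F}$ for $i > 0$ via the Leray spectral sequence
\[E_2^{p,q} = H^p_{\et}(\mathbb{P}^1_{\C_p}, R^q\overline{\pi}_{\GH,*}\mathcal{F}) \Rightarrow H^{p+q}_{\et}(\mathcal{M}_\infty/B(\Q_p), \mathcal{F}).\]
So the whole content of the proposition is to show $R^q\overline{\pi}_{\GH,*}\mathcal{F} = 0$ for $q > 0$, and this can be checked on stalks at geometric points.

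Fix a geometric point $\bar{x} = \spa(C, C^+) \to \mathbb{P}^1_{\C_p}$ with $C$ algebraically closed. The standard description of stalks of higher direct images and the fact that $\overline{\pi}_{\GH}$ is quasi-compact (Proposition \ref{qc}) give
\[(R^q\overline{\pi}_{\GH,*}\mathcal{F})_{\bar{x}} = \varinjlim_{U \ni \bar{x}} H^q_{\et}(\overline{\pi}_{\GH}^{-1}(U), \mathcal{F}),\]
where the colimit runs over quasi-compact étale (equivalently, affinoid open) neighbourhoods of $\bar{x}$ in $\mathbb{P}^1_{\C_p}$. By Proposition \ref{affinoidfibres}, the fibre $\overline{\pi}_{\GH}^{-1}(\bar{x})$ is affinoid perfectoid and equals the intersection $\bigcap_U \overline{\pi}_{\GH}^{-1}(U)$. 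Using that étale cohomology commutes with cofiltered limits of quasi-compact quasi-separated adic spaces with affine transition maps (a coherence argument as in \cite[Thm.~2.4.7]{SW}, already invoked in the proof of Lemma \ref{cohpt}), I can identify
\[(R^q\overline{\pi}_{\GH,*}\mathcal{F})_{\bar{x}} \cong H^q_{\et}\!\left(\overline{\pi}_{\GH}^{-1}(\bar{x}), \mathcal{F}|_{\overline{\pi}_{\GH}^{-1}(\bar{x})}\right).\]

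The key step is then to identify the fibre $\overline{\pi}_{\GH}^{-1}(\bar{x})$ as a product of $\bar{x}$ with a profinite set, so that Lemma \ref{cohpt} applies. For this I would use the local sections of $\pi_{\GH,0}$: near any point of $\mathbb{P}^1_{\C_p}$ we may choose an affinoid open $U$ and an affinoid $V \subset \mathcal{M}_0^{(0)}$ with $\pi_{\GH,0}|_V: V \xrightarrow{\sim} U$, and by the construction in the proof of Theorem \ref{secmainthm} we have
\[\overline{\pi}_{\GH}^{-1}(U) \cong (p_0^{-1}(V) \times^{B(\Z_p)} B')/B' \cong p_0^{-1}(V)/B(\Z_p).\]
Taking the fibre over $\bar{x}$ and using that $p_0 : \mathcal{M}_\infty^{(0)} \to \mathcal{M}_0^{(0)}$ is pro-finite étale with Galois group $\GL_2(\Z_p)$, one gets $p_0^{-1}(\bar{x}) \cong \bar{x} \times \underline{\GL_2(\Z_p)}$, and hence
\[\overline{\pi}_{\GH}^{-1}(\bar{x}) \cong \bar{x} \times \underline{\GL_2(\Z_p)/B(\Z_p)} \cong \bar{x} \times \underline{\mathbb{P}^1(\Z_p)}.\]
Since $\mathbb{P}^1(\Z_p)$ is profinite, Lemma \ref{cohpt} gives $H^q_{\et}(\overline{\pi}_{\GH}^{-1}(\bar{x}), \mathcal{F}) = 0$ for $q > 0$, so $R^q\overline{\pi}_{\GH,*}\mathcal{F} = 0$ and the proposition follows.

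I expect the main obstacle to be a clean justification of the stalk-equals-cohomology-of-fibre identification in the perfectoid setting: one must verify that the limit of quasi-compact étale neighbourhoods really computes cohomology of the affinoid perfectoid fibre, and match the notion of stalk at a geometric point with this geometric limit. The identification of the fibre with a profinite set times $\bar{x}$ is essentially forced by the construction in Theorem \ref{secmainthm}, but making it work uniformly in the geometric point (which may have higher rank) again uses Proposition \ref{affinoidfibres} together with the fact that the base change of $\underline{\mathbb{P}^1(\Z_p)}$ behaves well.
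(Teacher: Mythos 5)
Your proposal is correct and follows essentially the same route as the paper: show $R^q\overline{\pi}_{\GH,*}\cF=0$ for $q>0$ by identifying the stalk at a geometric point with the cohomology of the fibre (the paper also invokes Proposition~\ref{qc} together with \cite[Lemma 4.4.1]{CS} for exactly this step), identifying the fibre as a geometric point times a profinite set, and applying Lemma~\ref{cohpt}. Note that $\mathbb{P}^1(\Z_p)$ and $\mathbb{P}^1(\Q_p)$ coincide (valuative criterion of properness), so your $\bar{x}\times\underline{\mathbb{P}^1(\Z_p)}$ is the same as the paper's $\overline{y}\times\underline{\mathbb{P}^1(\Q_p)}$.
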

\begin{proof} We show that $R^i\overline{\pi}_{\GH,*}\cF=0$ for all $i>0$ by calculating the stalks
\[(R^i\overline{\pi}_{\GH,*}\cF)_{\xbar}\]
at any geometric point $\xbar= \spa(C(\xbar),C(\xbar)^+)$. 

By Proposition \ref{qc} we can apply \cite[Lemma 4.4.1]{CS} to get an isomorphism
\[(R^i\overline{\pi}_{\GH,*}\cF)_{\xbar} \cong H^i_{\et}((\cM_\infty/B(\Q_p))_{\xbar}, \cF),\]
where 
\[(\cM_\infty/B(\Q_p))_{\xbar} = \left(\cM_\infty/B(\Q_p)\times_{\mathbb{P}^1} \spa(C(\xbar),C(\xbar)^+)\right)^\wedge\]
is the fibre of $\cM_\infty/B(\Q_p)$ over $\xbar$, which we can identify with the perfectoid space $\overline{y}\times \underline{\mathbb{P}^1(\Q_p)}$, where $\overline{y}$ is any lift of $\overline{x}$ to $\cM_\infty$. The claim now follows from Lemma \ref{cohpt}. 
\end{proof}

\begin{lem}\label{sheaves} We have an isomorphism of sheaves on $(\mathbb{P}^1_{\C_p})_{\et}$
\[ (\overline{\pi}_{\GH,*}\cF_\chi) \otimes \cO^+_{\mathbb{P}_{\C_p}^1}/p \cong \overline{\pi}_{\GH,*}(\cF_\chi \otimes \cO^+_{\cM_\infty/B(\Q_p)}/p). \]
\end{lem}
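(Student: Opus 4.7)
My plan is to verify the isomorphism stalk-by-stalk at geometric points of $\mathbb{P}^1_{\C_p}$. Fix such a point $\overline{x} = \spa(C(\overline{x}), C(\overline{x})^+)$. Because $\overline{\pi}_{\GH}$ is quasicompact (Proposition \ref{qc}), the argument used in the proof of Proposition \ref{directimage} (invoking \cite[Lemma 4.4.1]{CS}) identifies $(\overline{\pi}_{\GH,*}\cG)_{\overline{x}}$ with $H^0$ of $\cG$ restricted to the fibre $F := (\cM_\infty/B(\Q_p))_{\overline{x}} \cong \overline{y} \times \underline{\mathbb{P}^1(\Q_p)}$, for any sheaf $\cG$ on $(\cM_\infty/B(\Q_p))_{\et}$. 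Since stalks of étale sheaves commute with tensor products, the desired statement reduces to showing that the natural map
\[
H^0(F, \cF_\chi|_F) \otimes_{\mathbb{F}_p} (\cO^+_{\mathbb{P}^1}/p)_{\overline{x}} \longrightarrow H^0(F, \cF_\chi|_F \otimes_{\mathbb{F}_p} \cO^+_F/p)
\]
is an isomorphism.

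Next I would trivialize $\cF_\chi$ on $F$. Since $\chi : B(\Q_p) \to \mathbb{F}_q^*$ is smooth with finite image, its kernel $K$ is open of finite index in $B(\Q_p)$, so $\cM_\infty/K \to \cM_\infty/B(\Q_p)$ is a finite \'etale Galois cover with group $B(\Q_p)/K$ trivializing $\cF_\chi$. The underlying topological space of $F$ is the profinite set $\mathbb{P}^1(\Q_p)$, a space of geometric points, so any finite \'etale cover of $F$ splits as a disjoint union; this forces $\cF_\chi|_F \cong \underline{\mathbb{F}_q}$.

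For the final computation, I would write $F \approx \varprojlim F_m$ with $F_m = \overline{y} \times P_m$ and $P_m$ a finite quotient of $\mathbb{P}^1(\Q_p)$, as in Proposition \ref{sim}. By the direct-limit argument in the proof of Lemma \ref{cohpt}, $H^0(F, \cA) = \varinjlim_m H^0(F_m, \cA_m)$ for any compatible system $(\cA_m)$ pulled back to $F$. Each $F_m$ is a finite disjoint union of copies of $\overline{y}$ indexed by $P_m$, so the relevant global sections are simply $P_m$-indexed products. Tensor products over $\mathbb{F}_p$ commute with finite products, and filtered colimits commute with tensor products, so both sides compute to
\[
\varinjlim_m \Map(P_m,\ \mathbb{F}_q \otimes_{\mathbb{F}_p} C(\overline{x})^+/p),
\]
giving the required isomorphism.

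The main obstacle, as I see it, is the bookkeeping for the integral structure sheaves: one must verify (i) that the stalk $(\cO^+_{\mathbb{P}^1}/p)_{\overline{x}}$ coincides with $C(\overline{x})^+/p$, and (ii) that the restriction of $\cO^+_{\cM_\infty/B(\Q_p)}/p$ to the fibre $F$ agrees with the perfectoid sheaf $\cO^+_F/p$ governed by $\cO^+_F(F) = \Map_{\cont}(\mathbb{P}^1(\Q_p), C(\overline{x})^+)$. Both are general facts about $\cO^+/p$ on rigid analytic varieties and the perfectoid fibres of $\overline{\pi}_{\GH}$ established in Proposition \ref{affinoidfibres}, but they must be tracked carefully. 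Once these identifications are in place the comparison is purely formal, reducing as above to the compatibility of tensor products with finite products and filtered colimits.
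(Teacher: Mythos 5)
Your proposal follows essentially the same route as the paper: reduce to stalks at a geometric point $\overline{x}$, invoke quasicompactness of $\overline{\pi}_{\GH}$ together with \cite[Lemma 4.4.1]{CS} to identify each stalk of a pushforward with the $H^0$ of the restriction to the fibre $\overline{y}\times\underline{\mathbb{P}^1(\Q_p)}$, trivialize $\cF_\chi$ there, and compute both sides as locally constant maps from $\mathbb{P}^1(\Q_p)$ with values in $\mathbb{F}_q\otimes K^+/p$. The only difference is cosmetic: you make the colimit description $\varinjlim_m\Map(P_m,-)$ explicit and the commutation of $\otimes_{\mathbb{F}_p}$ with finite products and filtered colimits, whereas the paper states the final identifications $H^0_\et(\overline{y}\times\underline{\mathbb{P}^1(\Q_p)},\mathbb{F}_q)\cong\Map_{\mathrm{loc.cst}}(\mathbb{P}^1(\Q_p),\mathbb{F}_q)$ and $H^0(\overline{y}\times\underline{\mathbb{P}^1(\Q_p)},\cO^+/p)\cong\Map_{\mathrm{loc.cst}}(\mathbb{P}^1(\Q_p),K^+/p)$ directly; you also correctly flag as the delicate points the identification of $(\cO^+_{\cM_\infty/B(\Q_p)}/p)|_F$ with the intrinsic $\cO^+_F/p$ of the fibre and of $(\cO^+_{\mathbb{P}^1}/p)_{\overline{x}}$ with $K^+/p$, which the paper likewise takes as given.
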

\begin{proof} There is a natural morphism 
\[(\overline{\pi}_{\GH,*}\cF_\chi) \otimes \cO^+_{\mathbb{P}_{\C_p}^1}/p \rightarrow \overline{\pi}_{\GH,*}(\cF_\chi \otimes \cO^+_{\cM_\infty/B(\Q_p)}/p)\]
so it suffices to check that the stalks at geometric points agree. For that let $\overline{x}=\spa(K,K^+)$ be a geometric point of $\mathbb{P}^1_{\C_p}$.
Then using Lemma 4.4.1 of \cite{CS} we get
\begin{eqnarray*}((\overline{\pi}_{\GH,*}\cF_\chi) \otimes \cO^+_{\mathbb{P}_{\C_p}^1}/p)_{\overline{x}} &\cong &(\overline{\pi}_{\GH,*}\cF_\chi)_{\overline{x}} \otimes (\cO^+_{\mathbb{P}_{\C_p}^1}/p)_{\overline{x}} \\
&\cong & H^0_{\et}(\left(\cM_\infty/B(\Q_p)\right)_{\overline{x}},{\cF_\chi}|_{(\cM_\infty/B(\Q_p))_{\overline{x}}})\otimes K^+/p.
\end{eqnarray*}
As before
\[\left(\cM_\infty/B(\Q_p)\right)_{\overline{x}}\cong \overline{y}\times \underline{\mathbb{P}^1(\Q_p)},\]
where $\overline{y}$ is any lift of $\overline{x}$ to $\cM_\infty$. 
Then 
\[H_{\et}^0(\overline{y}\times \underline{\mathbb{P}^1(\Q_p)},{\cF_\chi}|_{\overline{y}\times \underline{\mathbb{P}^1(\Q_p)}} ) \cong  \varinjlim_{U} H^0_\et(U, \cF_\chi|_U)
\]
where the limit runs over all \'etale neighbourhoods $U$ of $\overline{y} \times \underline{\mathbb{P}^1(\Q_p)}$ in $\cM_\infty/B(\Q_p)$. 
We choose an isomorphism ${\mathcal{F}_\chi}|_{\overline{y}\times \underline{\mathbb{P}^1(\Q_p)}} \cong \mathbb{F}_q$. Then 
\begin{eqnarray*} 
H_{\et}^0(\overline{y}\times \underline{\mathbb{P}^1(\Q_p)},\mathbb{F}_q ) &\cong & \varinjlim_{U} H^0_\et(U, \mathbb{F}_q)\\
&\cong&  \mathrm{Map}_{\cont}(\varprojlim \pi_0(U), \mathbb{F}_q)\\
&\cong &  \mathrm{Map}_{\cont}(\mathbb{P}^1(\Q_p), \mathbb{F}_q) \\
&\cong &  \mathrm{Map}_{\mathrm{loc.cst.}}(\mathbb{P}^1(\Q_p), \mathbb{F}_q).
\end{eqnarray*}

To calculate the other side, we use Lemma 4.4.1 of \cite{CS} again to get 
\begin{eqnarray*} &&\overline{\pi}_{\GH,*}(\cF_\chi \otimes \cO^+_{\cM_\infty/B(\Q_p)}/p)_{\overline{x}} \\ &\cong& H_{\et}^0\left(\left(\cM_\infty/B(\Q_p)\right)_{\overline{x}}, (\cF_\chi \otimes \cO^+_{\cM_\infty/B(\Q_p)}/p)|_{\left(\cM_\infty/B(\Q_p)\right)_{\overline{x}}}\right).
\end{eqnarray*}
Now 
\begin{eqnarray*} & & (\cF_\chi \otimes \cO^+_{\cM_\infty/B(\Q_p)}/p)|_{(\cM_\infty/B(\Q_p))_{\overline{x}}} \\
&\cong& \cF_\chi|_{(\cM_\infty/B(\Q_p))_{\overline{x}}} \otimes \cO^+_{\cM_\infty/B(\Q_p)}/p|_{(\cM_\infty/B(\Q_p))_{\overline{x}}}\\
&\cong& \mathbb{F}_q \otimes_{\mathbb{F}_p} \cO^+_{(\cM_\infty/B(\Q_p))_{\overline{x}}}/p 
\end{eqnarray*}
and so the claim follows from
\[H^0(\overline{y}\times \underline{\mathbb{P}^1(\Q_p)},\cO^+_{(\cM_\infty/B(\Q_p))_{\overline{x}}}/p) \cong \Map_{\mathrm{loc.cst}}(\mathbb{P}^1(\Q_p),K^+/p).
\]
\end{proof}

\begin{thm}\label{mainthm} Let $\chi_i:\Q_p^* \rightarrow \mathbb{F}^*_q, i=1,2$, be two smooth characters. Then 
\[\cS^2(\Ind(\chi)) =0.\]
\end{thm}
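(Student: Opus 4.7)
The plan is to identify $\cS^2(\Ind(\chi))$ via Proposition \ref{directimage} with $H^2_\et(\cM_\infty/B(\Q_p), \cF_\chi)$, pass to $\cO^+/p$-coefficients via Scholze's primitive comparison theorem, and establish the required almost vanishing by a Čech argument on a two-element affinoid perfectoid cover.

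Since $\cM_\infty/B(\Q_p)$ is proper (by the Remark after Proposition \ref{qc}), Scholze's primitive comparison---applied either directly to $\cM_\infty/B(\Q_p)$, or as a safer route on the proper smooth $\mathbb{P}^1_{\C_p}$ to $\cF_\pi$ and then transferred via Lemma \ref{sheaves} together with the $\cO^+/p$-variant of Proposition \ref{directimage} (whose proof is identical, using Lemma \ref{cohpt} for the fibres $\overline{y}\times \underline{\mathbb{P}^1(\Q_p)}$)---will yield an almost isomorphism
\[ \cS^2(\Ind(\chi)) \otimes_{\mathbb{F}_p} \cO_{\C_p}/p \;\simeq^{a}\; H^2_\et\!\left(\cM_\infty/B(\Q_p),\, \cF_\chi \otimes \cO^+/p\right). \]
Since an $\mathbb{F}_q$-vector space $M$ vanishes if and only if $M\otimes_{\mathbb{F}_p}\cO_{\C_p}/p$ is almost zero, the problem reduces to showing almost vanishing of the right-hand side.

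For the latter I plan to cover $\mathbb{P}^1_{\C_p}$ by two affinoid opens $U_1, U_2$ (with affinoid intersection $U_{12}$) chosen so that the preimages $V_i := \overline{\pi}_{\GH}^{-1}(U_i)$ and $V_{12} := \overline{\pi}_{\GH}^{-1}(U_{12})$ in $\cM_\infty/B(\Q_p)$ are all affinoid perfectoid. I expect such a cover can be constructed from Proposition \ref{rk1} together with the $\GL_2(\Q_p)$-equivariance of $\overline{\pi}_{\GH}$: take $U_1 = \mathbb{D}^1$ and $U_2$ its Weyl-translate, and exploit that $\pi_{\HT}^{-1}(\mathbb{D}^1)$ is affinoid perfectoid by \cite{torsion}. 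A two-element cover has $\check{H}^n = 0$ for $n \geq 2$; combined with acyclicity of the intersections (established next), the Čech-to-derived spectral sequence will force $H^2_\et(\cM_\infty/B(\Q_p), \cF_\chi\otimes \cO^+/p) \simeq^{a} 0$.

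It remains to verify $H^i_\et(V, \cF_\chi \otimes \cO^+/p) \simeq^{a} 0$ for $i > 0$ on each $V \in \{V_1, V_2, V_{12}\}$. The key point is that $\chi$ lands in $\mathbb{F}_q^*$, a group of order $q-1 = p^k - 1$ coprime to $p$, so $\cF_\chi|_V$ is trivialized by a finite étale Galois cover $\widetilde{V} \to V$ with Galois group $G$ of order prime to $p$. The space $\widetilde{V}$ is again affinoid perfectoid (finite étale over affinoid perfectoid), and by almost acyclicity of $\cO^+/p$ on affinoid perfectoid spaces, $H^i_\et(\widetilde{V}, \cO^+_{\widetilde{V}}/p) \simeq^{a} 0$ for $i > 0$. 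Since $|G|$ is invertible in $\mathbb{F}_p$, group cohomology $H^i(G,-)$ vanishes in positive degrees on $\mathbb{F}_p$-modules, and the Hochschild--Serre spectral sequence then yields $H^i_\et(V, \cF_\chi\otimes \cO^+/p)\simeq^{a} 0$ for $i>0$, as required. The hardest step will be realizing the two-element affinoid perfectoid cover above: Proposition \ref{rk1} only provides this locally around rank-one points, and globalizing to exactly two affinoids demands careful use of the Weyl-element symmetry of $\mathbb{P}^1$ combined with the structural description of $\pi_{\HT}^{-1}(\mathbb{D}^1)$ from \cite{torsion}.
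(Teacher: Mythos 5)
Your opening reduction --- pass from $\cS^2(\Ind(\chi))$ to $H^2_\et\left(\cM_\infty/B(\Q_p), \cF_\chi \otimes \cO^+/p\right)$ via the primitive comparison theorem on $\mathbb{P}^1_{\C_p}$, Lemma~\ref{sheaves} and Proposition~\ref{directimage} --- is exactly the paper's first step, and your ``safer route'' is indeed the one the paper takes (the primitive comparison is invoked for $\cF_\pi$ on the proper smooth $\mathbb{P}^1_{\C_p}$, not for the perfectoid space $\cM_\infty/B(\Q_p)$ directly).

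The second half, however, has a genuine gap exactly where you flag it. You want a \emph{two-element} affinoid cover $\{U_1,U_2\}$ of $\mathbb{P}^1_{\C_p}$ with $\overline{\pi}_{\GH}^{-1}(U_i)$ and $\overline{\pi}_{\GH}^{-1}(U_1\cap U_2)$ affinoid perfectoid, and you propose $U_1=\mathbb{D}^1$, $U_2 = w\mathbb{D}^1$, leaning on the fact from \cite{torsion} that $\pi_{\HT}^{-1}(\mathbb{D}^1)$ is affinoid perfectoid. But this conflates two entirely different period maps. The statement from \cite{torsion} concerns the \emph{Hodge--Tate} period map on the infinite-level modular curve $\cX^*_{\Gamma(p^\infty)}$; it says nothing about the \emph{Gross--Hopkins} map $\overline{\pi}_{\GH}$ on $\cM_\infty/B(\Q_p)$. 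Already at level zero, $\pi_{\GH,0}:\cM_0^{(0)}\to\mathbb{P}^1_{\C_p}$ is an infinite \'etale covering: it has local sections but is nowhere injective on large opens, so $\pi_{\GH,0}^{-1}(\mathbb{D}^1)$ is not quasi-compact, let alone affinoid. After passing to the quotient $\cM_\infty/B(\Q_p)$, the map $\overline{\pi}_{\GH}$ becomes quasi-compact (Proposition~\ref{qc}), so $\overline{\pi}_{\GH}^{-1}(\mathbb{D}^1)$ is a quasi-compact open, but there is no reason for it to be affinoid perfectoid; Proposition~\ref{rk1} only produces small affinoid perfectoid preimages over neighbourhoods contained in the image of a local section of $\pi_{\GH,0}$, and such neighbourhoods are strictly smaller than $\mathbb{D}^1$. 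Weyl symmetry does not rescue this, since the obstruction is at a single point (or a single section) and not a failure of equivariance.

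The paper's own proof sidesteps the need for any finite affinoid cover. Instead of Čech descent along a cover of $\mathbb{P}^1$, it pushes forward to the \emph{analytic} site $(\mathbb{P}^1_{\C_p})_{an}$ and argues stalkwise: it establishes (Proposition~\ref{affinoidfibres}, built on Proposition~\ref{rk1} for rank-one points plus a specialization argument for higher-rank points) that the \emph{fibres} $\overline{\pi}_{\GH}^{-1}(x)$ are affinoid perfectoid for every point $x\in\mathbb{P}^1_{\C_p}$ --- a much weaker statement than affinoid-perfectoidness of preimages of affinoids --- and then invokes the almost acyclicity of $\cO^+/p$ on affinoid perfectoid spaces \cite[Lemma~4.12]{scholzepHT} to show that the higher direct images $R^i\overline{\pi}_{\GH,*}(\cF_\chi\otimes\cO^+/p)$ on $(\mathbb{P}^1_{\C_p})_{an}$ are almost zero for $i>0$. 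The vanishing of $\cS^2$ then falls out of the fact that the analytic site of a rigid curve has cohomological dimension one (\cite[Proposition~2.5.8]{dJvdP}). Your prime-to-$p$ Galois-cover observation for trivialising $\cF_\chi$ is correct and would be a fine ingredient if you did have such a cover, but as it stands the construction of that cover is the missing step, and the paper's fibre-plus-analytic-site strategy is precisely the mechanism that avoids having to produce it.
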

\begin{proof}
As $\cS^2(\Ind(\chi))$ is an $\mathbb{F}_p$-vector space, it suffices to show that 
\[H_\et^2(\mathbb{P}^1_{\C_p}, \cF_\pi) \otimes \cO_{\C_p}/p \]
is almost zero.

We have a chain of almost isomorphisms 
\begin{eqnarray}
H_\et^2(\mathbb{P}^1_{\C_p}, \cF_\pi) \otimes \cO_{\C_p}/p &\cong^a& H_\et^2(\mathbb{P}^1_{\C_p}, \cF_\pi \otimes \cO^+_{\mathbb{P}_{\C_p}^1}/p) \\
& \cong & H_\et^2(\mathbb{P}^1_{\C_p}, (\overline{\pi}_{\GH,*}\cF_\chi) \otimes \cO^+_{\mathbb{P}_{\C_p}^1}/p)\\
& \cong & H_\et^2(\mathbb{P}^1_{\C_p}, \overline{\pi}_{\GH,*}(\cF_\chi \otimes \cO^+_{\cM_\infty/B(\Q_p)}/p)) \\
& \cong & H_\et^2(\cM_\infty/B(\Q_p), \cF_\chi \otimes \cO^+_{\cM_\infty/B(\Q_p)}/p),
\end{eqnarray}
where the first $\cong^a$ is \cite[Theorem 3.2]{scholzeLT}, the second and last are implied by Proposition \ref{directimage}, and the third is Lemma \ref{sheaves}.

Consider the morphism of sites
\[\overline{\pi}_{\GH}:(\cM_\infty/B(\Q_p))_{\et} \rightarrow (\mathbb{P}^1_{\C_p})_{an}.\]
We claim that 
\[ H^i_{\et}(\cM_{\infty}/B(\Q_p), \cF_\chi \otimes \cO^+/p) \cong^a H^i_{an}(\mathbb{P}^1_{\C_p}, \overline{\pi}_{\GH,*}(\cF_\chi \otimes \cO^+/p)).\]
Just as in \cite[Lemma 4.4.1]{CS} one proves that for a point $x=\spa(K,K^+)$ of $\mathbb{P}^1_{\C_p}$ one has
\[(R^i\overline{\pi}_{\GH,*}\cF_\chi \otimes \cO^+/p)_x \cong H^i_{\et}(\overline{\pi}^{-1}_{\GH}(x),\cF_\chi \otimes \cO^+/p).\]
By Lemma \ref{affinoidfibres}, $\overline{\pi}^{-1}_{\GH}(x) \subset \cM_\infty/B(\Q_p)$ is affinoid perfectoid and so by \cite[Lemma 4.12]{scholzepHT}
\[H^i_{\et}(\overline{\pi}^{-1}_{\GH}(x), \cF_\chi \otimes \cO^+/p)^a\] 
is almost zero for all $i>0$, which shows the above claim. 

But for any sheaf $\mathcal{F}$, $H^i_{an}(\mathbb{P}^1,\mathcal{F})= 0$ for $i>1$ (see \cite[Proposition 2.5.8]{dJvdP}). This finishes the proof.
\end{proof}

Note that in the above theorem we did not assume that the characters are distinct. As the functor $\pi \mapsto \cF_\pi$ is exact and as $\cS^3=0$ (\cite[Theorem 3.2]{scholzeLT}) we also get the following corollary.
\begin{corollary} Let $St$ denote the Steinberg representation, and let $\mu:\Q_p^* \rightarrow \mathbb{F}^*_q$ be a smooth character. Then 
\[\cS^2(St \otimes \mu) = 0. \]
\end{corollary}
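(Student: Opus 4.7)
The plan is to show the stronger statement that $H^2_\et(\mathbb{P}^1_{\C_p}, \cF_\pi) \otimes \cO_{\C_p}/p$ is almost zero; since $\cS^2(\Ind(\chi))$ is an $\mathbb{F}_p$-vector space, this will suffice. The strategy is to transport the cohomology computation from the sheaf $\cF_\pi$ on $\mathbb{P}^1_{\C_p}$ (which is complicated as a sheaf but lives on a simple space) to the rank-one local system $\cF_\chi$ on $\cM_\infty/B(\Q_p)$ (which is a simple sheaf, at the cost of a more complicated ambient space), and then exploit the fact that analytic cohomology of $\mathbb{P}^1$ vanishes above degree one.

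The first step is to apply Scholze's almost comparison theorem from \cite{scholzeLT} to pass from $\cF_\pi$ to $\cF_\pi \otimes \cO^+_{\mathbb{P}^1_{\C_p}}/p$. Next, using the identification $\cF_\pi \cong \overline{\pi}_{\GH,*}\cF_\chi$ (from the lemma about induction) together with Lemma \ref{sheaves}, we can rewrite
\[\cF_\pi \otimes \cO^+_{\mathbb{P}^1_{\C_p}}/p \cong \overline{\pi}_{\GH,*}(\cF_\chi \otimes \cO^+_{\cM_\infty/B(\Q_p)}/p).\]
Then Proposition \ref{directimage} (which gives $R^i\overline{\pi}_{\GH,*} = 0$ for $i>0$) and a Leray argument identify the cohomology on $\mathbb{P}^1_{\C_p}$ with $H^2_\et(\cM_\infty/B(\Q_p), \cF_\chi \otimes \cO^+/p)$.

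The key step, where the geometry of $\overline{\pi}_{\GH}$ is decisive, is to compare this étale cohomology with the \emph{analytic} cohomology of the pushforward on $\mathbb{P}^1_{\C_p}$. For this we regard $\overline{\pi}_{\GH}$ as a morphism of sites $(\cM_\infty/B(\Q_p))_\et \to (\mathbb{P}^1_{\C_p})_{an}$ and show that
\[H^i_\et(\cM_\infty/B(\Q_p), \cF_\chi\otimes \cO^+/p) \cong^a H^i_{an}(\mathbb{P}^1_{\C_p}, \overline{\pi}_{\GH,*}(\cF_\chi\otimes \cO^+/p)).\]
The main obstacle is showing that the higher direct images vanish almost; by an argument modelled on \cite[Lemma 4.4.1]{CS} (using quasi-compactness from Proposition \ref{qc}), the stalks of $R^i\overline{\pi}_{\GH,*}(\cF_\chi\otimes \cO^+/p)$ at a point $x$ compute the cohomology of the fibre $\overline{\pi}_{\GH}^{-1}(x)$. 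Here Proposition \ref{affinoidfibres} is essential: the fibre is affinoid perfectoid, so by \cite[Lemma 4.12]{scholzepHT} the almost vanishing $H^i(\overline{\pi}_{\GH}^{-1}(x), \cF_\chi\otimes \cO^+/p)^a = 0$ holds for $i>0$.

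Once this comparison is established, we conclude by invoking the classical vanishing $H^i_{an}(\mathbb{P}^1_{\C_p}, \cG) = 0$ for any sheaf $\cG$ and any $i>1$ (\cite[Proposition 2.5.8]{dJvdP}), which immediately kills the $i=2$ term. Assembling the almost isomorphisms in order gives $H^2_\et(\mathbb{P}^1_{\C_p},\cF_\pi)\otimes \cO_{\C_p}/p \cong^a 0$, and hence $\cS^2(\Ind(\chi))=0$. Note that the argument does not use the assumption that $\chi_1\neq \chi_2$, so exactness of $\pi\mapsto \cF_\pi$ combined with the known vanishing of $\cS^3$ then yields the corresponding vanishing for twists of the Steinberg representation as a corollary.
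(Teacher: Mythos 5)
Your proposal is correct and follows exactly the paper's route: the vanishing $\cS^2(\Ind(\chi_1,\chi_2))=0$ is proved for all pairs of characters (including $\chi_1=\chi_2$) via the chain of almost isomorphisms through $\cM_\infty/B(\Q_p)$, and the Steinberg case then follows from the short exact sequence $0\to\chi_1\to\Ind(\chi_1,\chi_1)\to \mathrm{St}\otimes\chi_1\to 0$, exactness of $\pi\mapsto\cF_\pi$, and $\cS^3=0$. This is precisely the paper's argument.
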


\begin{rem} Finally we discuss $\cS^1$. Let $\chi_i: \Q_p^*\rightarrow \mathbb{F}_q^*, i=1,2$ be two smooth characters, such that 
\[\chi_1 \neq \chi_2, \chi_2\omega, \chi_2\omega^2.\]
It is expected that for such pairs $(\chi_1,\chi_2)$
 \[\cS^1(\Ind(\chi_1,\chi_2))\neq 0
\] 
and is even infinite-dimensional. We give some details. By \cite[Theorem 2.1]{breuilicm} there is a unique non-split extension 
\[0\rightarrow \Ind(\chi_1,\chi_2) \rightarrow \Pi \rightarrow \Ind(\chi_2\omega, \chi_1 \omega^{-1})\rightarrow 0.\]
Let $\overline{r}:G_{\Q_p}\rightarrow \GL_2(\mathbb{F}_q)$ be the Galois representation associated with $\Pi$ under the mod~$p$ local Langlands correspondence, i.e., the unique non-split extension in $\mathrm{Ext}_{\Gal(\Qbar_p,\Q_p)}^1(\chi_2\omega, \chi_1)$ (\cite[Definition 2.2]{breuilicm}). Note this is an indecomposable representation and $\mathrm{End}(\overline{r})=\{1\}$.

One can globalize $\overline{r}$ to an absolutely irreducible Galois representation 
\[\overline{\rho}:G_F\rightarrow \GL_2(\overline{\mathbb{F}}_p),\]
where $F$ is a totally real field of even degree $[F:\Q]$ such that the prime $p$ is totally split in $F$ and such that $\overline{\rho}$ is automorphic for the quaternion algebra $D_0/F$ which is split at all finite places and definite at all archimedean places. In particular, 
\[\overline{\rho}|_{G_\mathfrak{p}}=\overline{r}\]
for any prime $\mathfrak{p}$ above $p$.
This follows from \cite[Corollary A.3]{geekisin}, an additional base change and the fact that a RAESDC automorphic representation of $\GL_2(F)$ is in the image of the global Jacquet--Langlands transfer from $D_0$.   

Let $G$ be the algebraic group defined by the units in $D_0$. The Galois representation~$\overline{\rho}$ gives a maximal ideal $\mathfrak{m}$ of some Hecke algebra and we fix a tame level $U^p \subset G(\A_{F,f}^p)$ such that 
\[\pi_{U^p}[\mathfrak{m}]\neq 0.\]
Here $\pi_{U^p}$ denotes the completed cohomology group as in \cite[Definition 6.1]{scholzeLT}. Scholze shows that in this situation the space
\[ \mathcal{S}^1(\pi_{U^p}[\mathfrak{m}])
\]
is infinite-dimensional. It is expected that $\pi_{U^p}[\mathfrak{m}]$ is of finite length and only has principal series representations as subquotients. In fact $\pi_{U^p}[\mathfrak{m}]$ should only have copies of $\Pi$ in its cosocle \cite[Remark 7.8]{sixauthors2}. Granting this, we get that for at least one constituent $\pi$ of $\Pi$, the space $\mathcal{S}^1(\pi)$ is non-zero and even infinite-dimensional. So the mod $p$ Jacquet--Langlands correspondence behaves differently from the classical Jacquet--Langlands correspondence.    
\end{rem}

\bibliography{LT}

\begin{thebibliography}{10}

\bibitem{SGA4}
Michael Artin, Alexander Grothendieck, and Jean-Louis Verdier.
\newblock {\em Theorie de Topos et Cohomologie Etale des Schemas {I}, {II},
  {III}}, volume 269, 270, 305 of {\em Lecture Notes in Mathematics}.
\newblock Springer, 1971.

\bibitem{breuilicm}
Christophe Breuil.
\newblock The emerging {$p$}-adic {L}anglands programme.
\newblock In {\em Proceedings of the {I}nternational {C}ongress of
  {M}athematicians. {V}olume {II}}, pages 203--230. Hindustan Book Agency, New
  Delhi, 2010.

\bibitem{sixauthors2}
Ana Caraiani, Matthew Emerton, Toby Gee, David Geraghty, Vytautas
  Pa{\v{s}}k{\=u}nas, and Sug~Woo Shin.
\newblock {P}atching and the $p$-adic local {L}anglands program for {$\rm
  GL(2,\mathbb{Q}_p)$}.
\newblock {\em arXiv:1609.06902v1}, 2016.

\bibitem{CS}
Ana Caraiani and Peter Scholze.
\newblock On the generic part of the cohomology of compact unitary {S}himura
  varieties.
\newblock {\em arXiv:1511.02418v1}, 2015.
\newblock (to appear in Annals of Math.).

\bibitem{dJvdP}
J.~de~Jong and M.~van~der Put.
\newblock \'{E}tale cohomology of rigid analytic spaces.
\newblock {\em Doc. Math.}, (1:No. 01):1--56 (electronic), 1996.

\bibitem{geekisin}
Toby Gee and Mark Kisin.
\newblock The {B}reuil-{M}\'ezard conjecture for potentially {B}arsotti-{T}ate
  representations.
\newblock {\em Forum Math. Pi}, 2:e1, 56, 2014.

\bibitem{hansen2}
David Hansen.
\newblock {Q}uotients of adic spaces.
\newblock {\em to appear in Math. Res. Letters}, 2016.
\newblock \url{http://www.math.columbia.edu/~hansen/overcon.pdf}.

\bibitem{huber1}
R.~Huber.
\newblock A generalization of formal schemes and rigid analytic varieties.
\newblock {\em Math. Z.}, 217(4):513--551, 1994.

\bibitem{huber}
Roland Huber.
\newblock {\em \'{E}tale cohomology of rigid analytic varieties and adic
  spaces}.
\newblock Aspects of Mathematics, E30. Friedr. Vieweg \& Sohn, Braunschweig,
  1996.

\bibitem{RZ}
M.~Rapoport and Th. Zink.
\newblock {\em Period spaces for {$p$}-divisible groups}, volume 141 of {\em
  Annals of Mathematics Studies}.
\newblock Princeton University Press, Princeton, NJ, 1996.

\bibitem{scholzepHT}
Peter Scholze.
\newblock {$p$}-adic {H}odge theory for rigid-analytic varieties.
\newblock {\em Forum Math. Pi}, 1:e1, 77 pp., 2013.

\bibitem{scholzeLT}
Peter Scholze.
\newblock On the \textit{p}-adic cohomology of the {L}ubin-{T}ate tower.
\newblock {\em arXiv:1506.04022}, 2015.
\newblock (to appear in Annales de l'ENS).

\bibitem{torsion}
Peter Scholze.
\newblock On torsion in the cohomology of locally symmetric spaces.
\newblock {\em Annals of Mathematics}, 182, no. 3:945--1066, 2015.

\bibitem{SW}
Peter Scholze and Jared Weinstein.
\newblock Moduli of {$p$}-divisible groups.
\newblock {\em Camb. J. Math.}, 1(2):145--237, 2013.

\bibitem{berkeley}
Jared Weinstein.
\newblock Peter {S}cholze's lectures on {$p$}-adic geometry.
\newblock 2015.
\newblock \url{http://math.bu.edu/people/jsweinst/Math274/ScholzeLectures.pdf}.

\bibitem{weinstein}
Jared Weinstein.
\newblock Semistable models for modular curves of arbitrary level.
\newblock {\em Invent. Math.}, 205(2):459--526, 2016.

\end{thebibliography}
\bibliographystyle{plain}

\end{document}